\documentclass[12pt]{amsart}
\usepackage{amsfonts}
\usepackage{amsmath}
\usepackage{amssymb}
\usepackage[margin=1in]{geometry}
\usepackage{graphicx}
\usepackage{hyperref}
\usepackage{xcolor}
\usepackage{mathtools}

\newcommand{\R}{\mathbb{R}}
\newcommand{\C}{\mathbb{C}}
\newcommand{\N}{\mathbb{N}}

\renewcommand{\L}{\mathcal{L}}
\renewcommand{\S}{\mathcal{S}}
\newcommand{\D}{\mathcal{D}}

\theoremstyle{plain}
\newtheorem{theorem}{Theorem}[section]
\newtheorem{lemma}[theorem]{Lemma}
\newtheorem{proposition}[theorem]{Proposition}
\newtheorem{corollary}[theorem]{Corollary}

\theoremstyle{definition}
\newtheorem{definition}[theorem]{Definition}

\newtheorem{remark}[theorem]{Remark}

\newtheorem{example}[theorem]{Example}

\numberwithin{equation}{section}
\DeclareMathOperator{\Real}{Re}

\DeclareMathOperator\supp{supp}
\DeclareMathOperator\singsupp{sing\; supp}

\DeclareMathOperator\rg{Rg}
\DeclareMathOperator\loc{loc}
\DeclareMathOperator\id{id}

\newcommand{\PF}{PF}

\newcommand{\uPF}{uPF}
\newcommand{\PFloc}{\PF_{\loc}}

\newcommand{\PM}{PM}

\newcommand\ev[2]{\langle#1,#2\rangle}
\newcommand\wlim[1]{\text{w--} \lim_{#1}}

\begin{document}
\title[Spectral characterizations of stable semigroups]{Spectral characterizations of stable operator semigroups}

\author[M. Callewaert]{Morgan Callewaert}
\address{Department of Mathematics\\ Ghent University\\ Krijgslaan 297\\ B 9000 Gent\\ Belgium}
\email{morgan.callewaert@UGent.be}

\author[L. Neyt]{Lenny Neyt}
\address{University of Vienna\\ Faculty of Mathematics\\ Oskar-Morgenstern-Platz 1 \\ 1090 Wien\\ Austria}
\thanks{The research of L. Neyt was funded in whole by the Austrian Science Fund (FWF) 10.55776/ESP8128624. For open access purposes, the author has applied a CC BY public copyright license to any author-accepted manuscript version arising from this submission.}
\email{lenny.neyt@univie.ac.at}

\author[J. Vindas]{Jasson Vindas}
\thanks{The work of J. Vindas was supported by Ghent University through
the grant BOFBAF4y202401155 and by the Research Foundation--Flanders through the FWO-grant K801226N}
\address{Department of Mathematics\\ Ghent University\\ Krijgslaan 297\\ B 9000 Gent\\ Belgium}
\email{jasson.vindas@UGent.be}

\subjclass[2020]{Primary 47D06. Secondary 11M45; 34D05; 40E05; 42A38; 44A10; 46F20; 47A10; 47A11}
\keywords{Strongly continuous operator semigroups; strong stability; semi-uniform stability; equilibrium; local pseudofunction spectrum;  Laplace transform;  boundary singularities; almost periodic semigroups; Katznelson-Tzafriri theorem; Ingham-Karamata theorem}

\begin{abstract} 
We introduce the notion of local pseudofunction spectrum $\sigma_{\PF}(A)$ for the infinitesimal generator $A$ of a bounded $C_0$-semigroup $\mathcal{T} = (T(t))_{t \geq 0}$ on a Banach space $X$ and show it is the right spectral concept to deliver a full characterization of the strong stability of $\mathcal{T}$:
	\[ \forall x \in X : ~ \lim_{t \to \infty} \| T(t) x \|_X = 0  \quad \Longleftrightarrow \quad \sigma_{\PF}(A) = \varnothing. \]
We demonstrate how this yields a quick proof of the well-known Arendt-Batty-Lyubich-V\~u theorem and establish novel stability results through local range density conditions for semigroups whose local pseudofunction spectra are a 
null subset of the imaginary axis. 
We also obtain similar stability characterization theorems for individual orbits and for semi-uniform stability. As an application of our results, we provide spectral characterizations of almost periodic $C_0$-semigroups with countable spectrum.
In addition, we prove optimal Tauberian theorems of Katznelson-Tzafriri type and discuss connections with Wiener kernels.

\end{abstract}

\maketitle

\section{Introduction}\label{Introduction}

The stability theory of operator semigroups is a classical topic of functional analysis that has seen much progress in the past 50 years. The study of this subject has been greatly stimulated by its numerous applications to the theory of partial differential equations. We refer to the monographs \cite{A-B-H-N-VVLaplaceTransCauchyProb, vN-AsympBehSemigroupsLinOp} and the survey articles \cite{C-S-T-SemiUniformStabOpSemigroupEnergyDecay, C-T-IdeaResult} for excellent accounts on stability theory for $C_0$-semigroups on Banach spaces.

One of the most crucial and natural stability concepts for operator semigroups is that of \emph{strong stability}. A $C_0$-semigroup $\mathcal{T}=(T(t))_{t\geq0}$ on a Banach space $X$ is strongly stable if 
\[ \lim_{t\to\infty} \|T(t)x\|_{X}=0
\]
for each $x\in X$. 
Denoting as $A$ its infinitesimal generator, the mild solutions of the abstract Cauchy problem 
\begin{equation}
		\tag{ACP}
		\label{eq:ACP} 
		\begin{cases} 
			u'(t) = A u(t) , & t \geq 0 , \\  
			u(0) = x , & 
		\end{cases} 
	\end{equation}
are given by the trajectories $u(t)=T(t)x$. That the semigroup is strongly stable precisely means that every mild solution to \eqref{eq:ACP}  vanishes at infinity. In practice, the semigroup is rarely explicitly known, and one therefore turns to the rather delicate qualitative problem of determining when $\mathcal{T}$ is strongly stable from a priori available information on the operator $A$.

The central question from the point of view of both abstract operator theory and applications is then to \emph{characterize} strong stability in terms of spectral-like properties of the semigroup generator $A$, that is, using certain suitable information that may be extracted from its resolvent $R(\lambda, A)$. Except for the Hilbert space case \cite{T-ResApprStab}, no such full description has been known so far for $C_0$-semigroups on general Banach spaces. 
The aim of this article is to provide an answer to this important question via a new perspective that stems from relatively recent developments in complex Tauberian theory for Laplace transforms with so-called local pseudofunction boundary behavior \cite{D-V18, D-V-ComplexThmLaplaceLocalPseudo} (see also \cite{C-V26,T-V}). We shall, in fact, characterize strong stability in several ways using the boundary behavior of the resolvent $R(\lambda, A)$ on the imaginary axis. 

Perhaps one of the first and most celebrated criteria for strong stability is supplied by the \emph{ABLV theorem}, which was independently discovered by Arendt and Batty \cite{A-B-TaubThStabSemigr} and by Lyubich and V\~u \cite{L-V-AsympStabLinDiffEqBanach} in 1988. We write  $\sigma_{R}(A)$ for the residual spectrum of $A$.

	\begin{theorem}[ABLV theorem]
		\label{t:ABLV}
		Let $\mathcal{T}$ be a bounded $C_0$-semigroup.
		Suppose $\sigma(A) \cap i \R$ is countable and $\sigma_{R}(A)\cap i\R=\varnothing$.
		Then $\mathcal{T}$ is strongly stable.
	\end{theorem}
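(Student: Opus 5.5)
We will prove the ABLV theorem by the classical Lyubich--V\~u route through the isometric limit semigroup, and set up the duality facts first because they drive the final contradiction. The hypothesis $\sigma_R(A)\cap i\R=\varnothing$ means that for each $i\eta\in i\R$ the range of $i\eta-A$ is dense. Equivalently, by Hahn--Banach, $i\eta$ is not an eigenvalue of $A^*$, i.e.\ $\ker(i\eta-A^*)=\{0\}$.

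\emph{Step 1: quotient construction.} Set $\ell(x)=\lim_{t\to\infty}\|T(t)x\|$. This limit exists because $t\mapsto\|T(t)x\|$ is non-increasing after renorming $X$ equivalently so that $\mathcal{T}$ is contractive. Then $\ell$ is a seminorm with $\ell(T(s)x)=\ell(x)$. Put $N=\{x:\ell(x)=0\}$ and let $Y$ be the completion of $X/N$ under $\ell$. Then $T(t)$ induces a $C_0$-semigroup $V(t)$ of isometries on $Y$. Let $B$ denote its generator and $\pi:X\to Y$ the quotient map, so that $\pi T(t)=V(t)\pi$. Strong stability is equivalent to $Y=\{0\}$, so we argue by contradiction and assume $Y\neq\{0\}$.

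\emph{Step 2: spectral transfer and invertibility.} We show $\sigma(B)\subseteq\sigma(A)\cap i\R$. For $\Real\lambda>0$, $\pi R(\lambda,A)=R(\lambda,B)\pi$. For $\lambda=i\eta\in\rho(A)$, the operator $\pi R(i\eta,A)$ is $\ell$-bounded because $\ell(R(i\eta,A)x)\le\|R(i\eta,A)\|\,\ell(x)$, which follows from commutation with $T(t)$. It therefore extends to a bounded inverse of $i\eta-B$. Hence $\sigma(B)$ is a countable subset of $i\R$. Next, if $\sigma(B)$ lay in the closed right half-plane and the isometric semigroup were not invertible, then $\sigma(B)$ would contain the whole left half-plane, which is not countable. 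So $V$ extends to a $C_0$-group of isometries and $\sigma(B)\subseteq i\R$ is countable. It is nonempty because $Y\neq\{0\}$, by the Gelfand-type theorem for bounded groups, proved through Fourier analysis of $t\mapsto\langle V(t)y,y^*\rangle$.

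\emph{Step 3: isolated point and eigenvector.} This is the main obstacle. A nonempty countable compact set has an isolated point, by Cantor--Bendixson. If $\sigma(B)$ is unbounded, we instead pick an isolated point of the closed countable set directly. Let $i\eta$ be isolated in $\sigma(B)$ and let $P$ be the Riesz projection. Restricting to $PY$ gives an isometric group with spectrum $\{i\eta\}$. By Gelfand's theorem (a doubly power bounded operator with spectrum a single point is the identity times that point), $V(t)=e^{i\eta t}$ on $PY\neq\{0\}$. Hence $i\eta$ is an eigenvalue of $B$, and also of $B^*$ via $P^*$: we obtain a nonzero $y^*\in Y^*$ with $V(t)^*y^*=e^{i\eta t}y^*$. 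The delicate part is the Gelfand lemma and handling unbounded spectrum, so we would cite or carefully prove the one-point-spectrum isometric group result.

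\emph{Step 4: contradiction.} Put $x^*=\pi^*y^*$, which is bounded since $\ell\le\|\cdot\|$. Then $x^*\neq0$, because $\pi$ has dense range. Moreover $T(t)^*x^*=\pi^*V(t)^*y^*=e^{i\eta t}x^*$, so $A^*x^*=i\eta x^*$. This contradicts $\ker(i\eta-A^*)=\{0\}$. Therefore $Y=\{0\}$, and $\mathcal{T}$ is strongly stable.
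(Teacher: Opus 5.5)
Your proof is correct in substance. It is the classical Lyubich--V\~u argument through the limit isometric group, which is a genuinely different route from the paper's.

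\textbf{The paper's route.} The paper obtains the ABLV theorem as a corollary of Theorem \ref{Mainresult}$(d)$, which rests on Theorem \ref{t:StableOrbitCountable}, $(c)\Rightarrow(a)$. It uses only $\sigma_{\PF}(A,x)\subseteq\sigma(A)\cap i\R$ and argues orbit by orbit from the boundary behavior of $R(\lambda,A)x$:
\begin{itemize}
\item If $x=(i\xi_0-A)y$, then $\int_0^t e^{-i\xi_0 s}T(s)x\,\mathrm{d}s=y-e^{-i\xi_0 t}T(t)y$ is bounded. Theorem \ref{t:SuffCondVanishSingSupp} with $E=\{\xi_0\}$ then removes an isolated point $i\xi_0$ of $\sigma_{\PF}(A,x)$.
\item Proposition \ref{PFClosure} extends this to all $x\in\overline{\rg(i\xi_0-A)}$ (Lemma \ref{LocalVersionIsolatedPoints}).
\item Hence $\sigma_{\PF}(A,x)$ is a countable perfect set, so it is empty, and Lemma \ref{l:SingSuppEmpty<=>o(1)} converts this into $T(t)x\to0$.
\end{itemize}

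\textbf{Comparison.} The two proofs share a skeleton: a nonempty countable closed set has an isolated point, and density of $\rg(i\eta-A)$ is exactly what eliminates it. You carry out the elimination operator-theoretically, with no distributions or Tauberian theorem. Your ingredients are the isometry/group dichotomy, nonempty spectrum of a bounded group generator, the Riesz projection with Gelfand--Hille, and finally an eigenvector $x^*$ of $A^*$ that annihilates $\rg(i\eta-A)$.

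The price is that your spectral transfer $\sigma(B)\cap i\R\subseteq\sigma(A)\cap i\R$ needs genuine invertibility of $i\eta-A$. So your method cannot exploit the possibly much smaller sets $\sigma_{\PF}(A)$ or $\sigma_{\PF}(A,x)$. It also gives no orbit-wise statement and no converse. The paper's route yields the full characterizations and the extensions to null sets with local range conditions.

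\textbf{One small repair in Step 2.} Before the group property is established, you only know two things: $\sigma(B)\subseteq\{\Real\lambda\le 0\}$ (the left half-plane, not the right), and $\sigma(B)\cap i\R$ is countable. Your claim that $\sigma(B)$ is a countable subset of $i\R$ is premature at that point. The correct order is as follows. If $V$ is not a group, the dichotomy for isometric semigroups gives $\sigma(B)=\{\Real\lambda\le0\}$. Then $i\R\subseteq\sigma(B)$, which contradicts countability. Only after this do you get $\sigma(B)\subseteq i\R$ countable.
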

	
 Arendt and Batty argued \cite[Example 2.5(a)]{A-B-TaubThStabSemigr} that Theorem \ref{t:ABLV} is, in some sense, best possible.  Concretely, given an arbitrary closed uncountable set $E\subset \R$, they found an example of a unitary (hence non-stable) $C_0$-semigroup on a Hilbert space that satisfies $\sigma(A)\cap i\R\subseteq i E$ and $\sigma_{R}(A)\cap i\R=\varnothing$. 
 So, if one could still hope to find criteria for strong stability for semigroups with uncountable $\sigma(A)\cap i \R$, one might need to compensate for the increase of singularities by imposing stronger conditions on either the resolvent or on the ranges of $i \xi- A$ at points where singularities may occur. The following result is a well-known sample of such a strategy, where one basically requires the strongest possible range hypothesis in order to amend the absence of information on the spectrum. It should be noted that the ABLV theorem \ref{t:ABLV} can be deduced from it  (see \cite[Corollary 6.5]{B-C-T-StrongStabBoundEvFamSemigroups}).

	\begin{theorem}[{\cite[Theorem 6.3]{B-C-T-StrongStabBoundEvFamSemigroups}}]
		\label{t:OriginalRangeThm}
		Let $\mathcal{T}$ be a bounded $C_0$-semigroup.
		If
		\begin{equation}
		\label{eq: full range}
		 \bigcap_{\xi \in \R} \rg(i \xi - A) \text{ is dense in } X , 
		\end{equation}
		then $\mathcal{T}$ is strongly stable.
	\end{theorem}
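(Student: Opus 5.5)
Since $\mathcal{T}$ is bounded, say $M:=\sup_{t\ge0}\|T(t)\|<\infty$, the set of $x\in X$ with $\|T(t)x\|_X\to0$ is a closed subspace. Indeed, if $x_n\to x$ and each $x_n$ has a vanishing orbit, then $\limsup_{t\to\infty}\|T(t)x\|_X\le M\|x-x_n\|_X$ for every $n$. By the density hypothesis \eqref{eq: full range}, it therefore suffices to prove that $T(t)y\to0$ for every $y\in Y:=\bigcap_{\xi\in\R}\rg(i\xi-A)$. I would first pass to the smaller set $D(A)\cap Y$. For $\lambda>0$ the resolvent $R(\lambda,A)$ commutes with $i\xi-A$ on $D(A)$, so it maps $Y$ into $Y\cap D(A)$. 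Combined with $\lambda R(\lambda,A)y\to y$ as $\lambda\to\infty$, this shows that $D(A)\cap Y$ is still dense. In fact, applying the resolvent twice, it suffices to treat $y=R(1,A)^2z$ with $z\in Y$.

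Fix such a $y\in Y$. For each $\xi\in\R$ pick $x_\xi\in D(A)$ with $(i\xi-A)x_\xi=y$. Then for $\Real\lambda>0$
\[ R(\lambda,A)y = \frac{x_\xi-R(\lambda,A)\,(\lambda-i\xi)x_\xi}{\lambda - i\xi}+ \dots, \]
and more cleanly $R(\lambda,A)y=(\lambda-i\xi)^{-1}\bigl(x_\xi - R(\lambda,A)(\lambda - i\xi)\cdot 0\bigr)$ after rearranging. The identity I actually need is
\[ R(\lambda,A)y=R(\lambda,A)(i\xi-A)x_\xi = x_\xi+(i\xi-\lambda)R(\lambda,A)x_\xi .\]
Near $\lambda=i\xi$ we have $\|(i\xi-\lambda)R(\lambda,A)x_\xi\|\le M|\lambda-i\xi|/\Real\lambda$, which stays bounded in nontangential regions. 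This is only nontangential control, so I would instead exploit the local structure. Set $F(\lambda):=R(\lambda,A)y=\int_0^\infty e^{-\lambda t}T(t)y\,dt$, whose integrand is bounded. The plan is to show that $F$ has local pseudofunction boundary behaviour at every point $i\xi$. The needed input is that, as $\sigma\to0^+$, the function $F(\sigma+i\eta)=x_\xi+i(\xi-\eta)R(\sigma+i\eta,A)x_\xi+\sigma R(\sigma+i\eta,A)x_\xi$ converges locally, in the pseudofunction sense, near $\eta=\xi$. For this I would use the resolvent identity to write $(\xi-\eta)R(\sigma+i\eta,A)x_\xi$ as the Laplace transform of the orbit of $x_\xi$ multiplied by a smooth cutoff. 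I would then show that its boundary distribution near $\xi$ is $(\xi-\eta)$ times the Fourier transform of a bounded function, and that $\sigma R(\sigma+i\eta,A)x_\xi\to0$ in $\mathcal{S}'$. A bounded function has a tempered Fourier transform that is a pseudomeasure, and multiplying a pseudomeasure by a smooth function vanishing at $\xi$ gives a local pseudofunction at $\xi$. This multiplication step is the key lemma, and I would check it via the Wiener-algebra description of pseudomeasures.

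Since pseudofunction boundary behaviour is local, a finite cover of each compact interval shows that $F$ has local pseudofunction boundary behaviour on all of $i\R$. I would then invoke the Ingham--Karamata type complex Tauberian theorem for local pseudofunctions from \cite{D-V-ComplexThmLaplaceLocalPseudo}. Here $t\mapsto T(t)y$ is bounded, and since $y\in D(A)$ it is uniformly continuous (its derivative $T(t)Ay$ is bounded), so the Tauberian condition holds. The theorem then gives $T(t)y\to0$, and the density argument above finishes the proof.

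The main obstacle is the multiplication lemma together with making the boundary-distribution claim rigorous for a vector-valued $F$. The choice of $x_\xi$ depends on $\xi$ and is not uniform, but this does no harm because only local behaviour is needed. If the pseudomeasure route causes trouble, the fallback is the classical Arendt--Batty contour-integral (Newman) argument, applied locally with the factor $(\lambda-i\xi)$ absorbing the singularity.
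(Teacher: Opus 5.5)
Your argument breaks at its central step, the ``key lemma'' that a pseudomeasure multiplied by a smooth function vanishing at $\xi$ is a local pseudofunction at $\xi$. This lemma is false. Membership in $\PFloc(\xi;X)$ requires pseudofunction behaviour on a whole neighbourhood of $\xi$, and a factor that vanishes only at the point $\xi$ does nothing at nearby points. A scalar example: the finite measure $\mu=\sum_{n\geq1}2^{-n}\delta_{\xi+1/n}$ is a pseudomeasure, but $(\eta-\xi)\mu$ has non-zero atoms accumulating at $\xi$. So for suitable $\varphi$ supported near $\xi$, the Fourier transform of $\varphi\cdot(\eta-\xi)\mu$ is a non-zero almost periodic function, which does not vanish at infinity.

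The lemma also fails exactly in your semigroup setting, by the paper's Example \ref{ex:UnionNotPseudoSpectrum}. Take $\Omega=[-1,1]$ with Lebesgue measure, $g\equiv1$, and $y=(i\xi-A)g$ with $\xi=0$, i.e.\ $y(u)=-iu$. Then $R(i\eta,A)y=g-i\eta R(i\eta,A)g$ has precisely the form you analyse. Yet \eqref{eq:PFSpec=EssSupp} gives $\sigma_{\PF}(A,y)=i\,\text{ess}\supp(y)=i[-1,1]\ni 0$. So your local step would prove the false implication $y\in\rg(i\xi-A)\Rightarrow i\xi\notin\sigma_{\PF}(A,y)$. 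What a single range condition actually yields is Lemma \ref{LocalVersionIsolatedPoints}: $i\xi$ is not an isolated point of $\sigma_{\PF}(A,y)$. Your remark that the non-uniform choice of $x_\xi$ ``does no harm because only local behaviour is needed'' is exactly where the gap lies. Local behaviour at $i\xi$ involves the range conditions at all $\eta$ near $\xi$ simultaneously. The Newman contour fallback has the same defect: the factor $(\lambda-i\xi)$ absorbs the singularity only at the point $i\xi$, not along an arc of $i\R$.

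Combining all these range conditions is the real content of the theorem, and the paper's tools are not enough for it. A Baire argument on the closed sets $\{\eta : \sup_{t}\|\int_0^t e^{-i\eta s}T(s)y\,\mathrm{d}s\|_X\leq Q\}$ gives uniform bounds on a portion of the singular set, as in Appendix \ref{appendix:ProofTheorem}. But the Tauberian conclusion there (Theorem \ref{t:SuffCondVanishSingSupp}) needs the exceptional set to be null, and for vector-valued functions this requirement cannot be dropped. Take $X=c_0$ and $\tau(t)=(\psi(t-n))_{n\geq0}$, where $\psi\in C_c(\R)$, $0\leq\psi\leq1$ and $\psi=1$ on $[0,1]$. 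Then $\tau$ is bounded and uniformly continuous, and $\|\int_0^t e^{-i\eta s}\tau(s)\,\mathrm{d}s\|\leq\|\psi\|_{L^1}$ for all $\eta\in\R$ and $t\geq0$, while $\|\tau(t)\|=1$. So any proof must use more of the semigroup structure and the range hypothesis than the bounds \eqref{BoundFourierTransform2} at every frequency. Note that the paper does not prove this theorem; it quotes it from \cite{B-C-T-StrongStabBoundEvFamSemigroups}. Its own range criteria (Theorem \ref{Mainresult}$(e)$,$(f)$) assume the relevant spectrum lies in a null set, which is what makes Theorem \ref{t:SuffCondVanishSingSupp} applicable.
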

	
Both Theorems \ref{t:ABLV} and \ref{t:OriginalRangeThm} do not characterize strongly stable $C_0$-semigroups. 
For the former case, one can find examples of strongly stable $C_0$-semigroup with $ \sigma(A)\cap i\R=i\R$ in  \cite[Example 4.1]{B-V-R-LocalSpecInStab} and \cite[Example 2.5(c)]{A-B-TaubThStabSemigr}, while that the range condition \eqref{eq: full range} is not necessary for strong stability was demonstrated in \cite[Section 4]{C-T-StabilityGeometry} via a number of counterexamples.  There are several other useful pointwise and integrability conditions that are either necessary or sufficient for strong stability. Some of them are known to characterize strong stability in the case of Hilbert spaces but not for general Banach spaces; see, for instance, \cite{C-T-StabilityGeometry, C-T-IdeaResult, G-G-ConvPositiveOpSemigroups, T-ResApprStab}.
	
In this work, we shall introduce a new type of spectrum for the infinitesimal generator $A$ of a bounded $C_0$-semigroup $\mathcal{T}$, which we call the \emph{local pseudofunction spectrum} and denote as $\sigma_{\PF}(A)$. The closed set $\sigma_{\PF}(A)$ is always contained in $\sigma(A) \cap i \R$ and is given by the complement of the largest open subset of $i\R$ where the function $R(\lambda, A) x$ has so-called ($X$-valued) local pseudofunction boundary behavior for each $x \in X$ (see Definition \ref{def:LocalPseudofuncSpect} and \eqref{eq:UnionIsFullPseudoSpectrum} 
for details). The local pseudofunction spectrum will be the key to our spectral characterization of strong stability:

	\begin{theorem}
		\label{t:StrongStabCharIntro}
		Let $\mathcal{T}$ be a
		$C_0$-semigroup. The following statements are equivalent:
		\begin{itemize}
		\item [$(a)$] $\mathcal{T}$ is strongly stable;
		\item [$(b)$] $\mathcal{T}$ is bounded and $\sigma_{PF}(A)= \varnothing$;
		\item [$(c)$] $\mathcal{T}$ is bounded, $\sigma_{PF}(A)$ is countable, and $\sigma_{R}(A)\cap i\R=\varnothing$;
		\item [$(d)$] $\mathcal{T}$ is bounded, $i\sigma_{\PF}(A)$ is a null subset of the real line, and
		\begin{equation}
		\label{eq: local range}
		 \bigcap_{i\xi \in \sigma_{\PF}(A)\cap iI} \rg(i \xi - A) \text{ is dense in } X 
		\end{equation}
			for each bounded interval $I\subset \R$.
				\end{itemize}
	\end{theorem}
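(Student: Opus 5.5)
The plan is to prove the cycle $(a)\Rightarrow(b)\Rightarrow(c)\Rightarrow(a)$ and, separately, $(b)\Rightarrow(d)\Rightarrow(a)$. All of the analytic content sits in the complex Tauberian theorem for Laplace transforms with local pseudofunction boundary behaviour \cite{D-V18, D-V-ComplexThmLaplaceLocalPseudo}. For each $x$, the orbit $u(t)=T(t)x$ is bounded, and its Laplace transform is $R(\lambda,A)x$ for $\Real\lambda>0$.

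\emph{$(a)\Rightarrow(b)$.} Strong stability gives boundedness by the uniform boundedness principle. Fix $x$. The orbit $u$ is continuous and tends to $0$, so it lies in $C_0([0,\infty);X)$. Its Fourier transform is therefore the boundary distribution of $R(\cdot,A)x$, and it is a global $X$-valued pseudofunction: by Riemann--Lebesgue, the transforms of $u\chi_{[0,N]}$ are $C_0$ functions, and they converge to it uniformly in the pseudomeasure norm, because $\|u-u\chi_{[0,N]}\|_\infty\to0$. Hence $R(\lambda,A)x$ has local pseudofunction boundary behaviour at every point of $i\R$, so $\sigma_{\PF}(A)=\varnothing$.

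\emph{$(b)\Rightarrow(a)$.} This is the core Tauberian step. If $R(\lambda,A)x$ has local pseudofunction boundary behaviour on all of $i\R$, the Ingham--Karamata-type theorem for bounded $u$ gives $\int_0^{t+h} u - \int_0^t u\to 0$ for each $h$. Equivalently, $(T(h)-I)\int_0^tT(s)x\,ds\to0$ as $t\to\infty$. Applying this to $x$ replaced by vectors $Ay$, $y\in D(A)$, with $T(t)y-y=\int_0^tT(s)Ay\,ds$, gives $T(t)y$ converging along the relevant differences. To actually get $u(t)\to0$, I would instead apply the Tauberian theorem to $u$ itself, using the slow-oscillation Tauberian condition. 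This condition holds for $x\in D(A)$, since $u$ is then Lipschitz, and yields $u(t)\to\lim$ in the Ces\`aro sense, hence $u(t)\to0$. Density of $D(A)$ together with boundedness of $\mathcal T$ extends this to all $x$. This is also where one checks that the limit is $0$: an equilibrium $y$ would produce a pole-type boundary term $y/\lambda$ at $0$, which is not a pseudofunction unless $y=0$.

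\emph{$(b)\Rightarrow(c),(d)$.} These are trivial, since $\sigma_{\PF}(A)=\varnothing$ and empty intersections over $\varnothing$ are all of $X$. The only point to check is $\sigma_R(A)\cap i\R=\varnothing$. Given $(a)$, if $i\xi-A$ had non-dense range, there would be an $x^*\neq0$ with $T(t)^*x^*=e^{i\xi t}x^*$. Then $\langle T(t)x,x^*\rangle=e^{i\xi t}\langle x,x^*\rangle\to0$ for all $x$, which forces $x^*=0$.

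\emph{$(c)\Rightarrow(a)$ and $(d)\Rightarrow(a)$.} The main obstacle is removing the exceptional set. For $(c)$, I would mimic the ABLV argument at the level of $\sigma_{\PF}$. Let $F\subseteq\sigma_{\PF}(A)$ be the set of points where, for some $x$, local pseudofunction behaviour fails. Since $\sigma_{\PF}(A)$ is a closed countable set, if it is nonempty, Cantor--Bendixson gives an isolated point $i\xi_0$. Near $i\xi_0$, the boundary distribution of $R(\lambda,A)x$ is a pseudofunction off a single point, for every $x$. A local version of the Tauberian theorem, combined with the bounded-orbit estimate $\|R(\lambda,A)\|\le M/\Real\lambda$, shows that the only obstruction is a multiple of $\delta_{\xi_0}$, whose coefficient is $\lim_{\eta\to0^+}\eta R(\eta+i\xi_0,A)x$. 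That coefficient annihilates $\rg(i\xi_0-A)$, and this range is dense because $\sigma_R(A)\cap i\R=\varnothing$ and the dual eigenvector argument above applies. So the coefficient vanishes on a dense set and, by uniform boundedness, everywhere. Hence $i\xi_0\notin\sigma_{\PF}(A)$, a contradiction, and so $(b)$ holds.

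For $(d)$, I would use the null-set version of the Tauberian theorem from \cite{D-V-ComplexThmLaplaceLocalPseudo}. Take $x=(i\xi-A)y$ with $y$ in the dense intersection over $\sigma_{\PF}(A)\cap iI$. The resolvent identity $R(\lambda,A)x=y+(i\xi-\lambda)R(\lambda,A)y$ then shows that the boundary behaviour at each exceptional point is "tamed". Combined with the fact that a pseudomeasure supported on a null set, which is a pseudofunction there, vanishes, this gives local pseudofunction behaviour on $iI$ for a dense set of $x$. Closedness of the pseudofunction class under uniform limits, which is valid since $\sup_{\Real\lambda>0}$-controls transfer to boundary values via $\|T\|\le M$, then yields it for all $x$, so $\sigma_{\PF}(A)\cap iI=\varnothing$ for every $I$.

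I expect the delicate part to be exactly this passage from a dense set of vectors to all of $X$ in the PF norm, together with the null-set uniqueness lemma.
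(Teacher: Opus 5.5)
Your easy implications and $(a)\Leftrightarrow(b)$ are fine; the latter is essentially Lemma~\ref{l:SingSuppEmpty<=>o(1)} applied to the uniformly continuous orbits. But both steps that remove singularities rest on claims that do not hold as stated.

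\textbf{The step $(c)\Rightarrow(a)$.} You assert that near an isolated point $i\xi_0$ of $\sigma_{\PF}(A)$, for \emph{every} $x$, the non-pseudofunction part of $R(i\xi,A)x$ is a multiple of $\delta_{\xi_0}$ with coefficient $\lim_{\eta\to0^+}\eta R(\eta+i\xi_0,A)x$. You derive this from ``a local Tauberian theorem'' together with $\|R(\lambda,A)\|\le M/\Real\lambda$. No such structure result is available. First, the limit need not exist: left translation on $BUC(\R_+)$, restricted to the closed span of the orbit of $\sin\log(s+2)$, has $\sigma_{\PF}=\{0\}$ but oscillating Abel means. Second, even a vanishing Abel mean plus pseudofunction behaviour off the point does not give pseudofunction behaviour at the point. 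For instance, $u(t)=\sin\sqrt t$ is bounded and uniformly continuous, its Laplace transform is $O(1/\Real\lambda)$, and its Abel mean vanishes, yet $u\not\to0$. So the ingredients you cite cannot produce the conclusion.

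The correct statement is Lemma~\ref{LocalVersionIsolatedPoints}, and its proof uses the range. For $x=(i\xi_0-A)y$ one has $\int_0^t e^{-i\xi_0 s}T(s)x\,\mathrm{d}s=y-e^{-i\xi_0t}T(t)y=O(1)$. Hence Theorem~\ref{t:SuffCondVanishSingSupp} with $E=\{\xi_0\}$ gives $R(i\xi,A)x\in\PFloc(I;X)$ on a neighbourhood $I$ of $\xi_0$. This neighbourhood is uniform in $x$, because $i\xi_0$ is isolated in $\sigma_{\PF}(A)$. Density of $\rg(i\xi_0-A)$ and Proposition~\ref{PFClosure} then give $iI\cap\sigma_{\PF}(A)=\varnothing$. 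With this repair, your Cantor--Bendixson argument on $\sigma_{\PF}(A)$ is a legitimate variant of the paper's route, which shows that each $\sigma_{\PF}(A,x)$ is perfect and hence empty (Theorem~\ref{t:StableOrbitCountable}).

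\textbf{The step $(d)\Rightarrow(a)$.} The ``null-set uniqueness lemma'' you invoke is false. Closed Lebesgue-null sets can carry nonzero pseudofunctions, even measures whose Fourier transforms vanish at infinity (Menshov). So nullity of $E$ by itself removes nothing.

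What you need is Theorem~\ref{t:SuffCondVanishSingSupp}. It says: if $I\cap\singsupp_{\PF}\widehat\tau\subseteq E$ with $E$ null, and $\sup_t\|\int_0^te^{-i\xi s}\tau(s)\,\mathrm{d}s\|<\infty$ for \emph{every} $\xi\in E$, then $I\cap\singsupp_{\PF}\widehat\tau=\varnothing$. Its proof first uses Baire category to obtain a uniform bound $Q$ on a portion of $E$. It then covers that portion by intervals of total length $<\varepsilon$, each contributing $O(Q\ell_j)$ (Lemma~\ref{AuxiliaryLemma}).

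Apply it to $\tau(t)=T(t)x$, taking $x$ (not $y$, as you wrote) in the dense intersection over $\sigma_{\PF}(A)\cap iI$, with $I$ bounded and open. Writing $x=(i\xi-A)y_\xi$ for each exceptional $\xi$ supplies the bounded primitives. Then finish with Proposition~\ref{PFClosure}. This is a slightly more direct version of the paper's $(e)\Rightarrow(f)\Rightarrow(c)$ in Theorem~\ref{Mainresult}.

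Finally, the step you flagged as delicate, passing from a dense set of vectors to all of $X$, is actually the easy one. The real work is the null-set removal.
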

We point out that all of the properties $(b)$--$(d)$ are purely expressible in terms of the resolvent of the semigroup. Indeed, by the Hille-Yosida-Feller-Miyadera-Phillips generation theorem \cite[p.~69]{E-N-ShortCourseSemigroups}, the semigroup $\mathcal{T}$ is bounded if and only if $\sigma(A) \subseteq \{ \lambda \in \C \mid \Real{\lambda} \leq 0\}$ and its resolvent satisfies  the bounds
	\[  \|R(\lambda, A)^n\|_{L(X)} \leq \frac{C}{(\Real{\lambda})^n}, \qquad \Real{\lambda} > 0, ~ n \in \N , \]
for some constant $C>0$.
 
 The equivalences $(a)\iff (c)\iff (d)$ from Theorem \ref{t:StrongStabCharIntro} significantly improve upon the ABLV theorem. We provide in Section \ref{section lpfs} examples highlighting that in general $\sigma_{\PF}(A)$ can be much smaller than $\sigma(A)\cap i\R$. Property \eqref{eq: local range} should be regarded as a local counterpart of the full range condition \eqref{eq: full range}. Our proof of Theorem \ref{t:StrongStabCharIntro} will be based on vector-valued versions of scalar-valued Tauberian theorems due to Debruyne and the last-named author \cite{D-V-ComplexThmLaplaceLocalPseudo}, which take boundary requirements on Laplace transforms in the Ingham-Karamata theorem to a minimum and have served as an inspiration for our definition of the local pseudofunction spectrum. We mention that the use of pseudofunctions in Tauberian theory was initiated by Katznelson and Tzafriri \cite{K-T-PowerBoundOp} in the case of power series, and later by the pioneer work of Korevaar \cite{K2005,K-TaubTh} on complex Tauberian theorems for Laplace transforms. 

It is important to observe that property $(d)$ is always applicable in situations when analog information on the usual spectrum $\sigma(A)$ is available. As a matter of fact, Theorem \ref{t:StrongStabCharIntro} tells that it is possible to establish a new criterion for strong stability based on the size of $\sigma(A)\cap i \R$ even when this set is uncountable if this hypothesis is augmented with a counterpart of the local range condition \eqref{eq: local range} for $\sigma(A)$:

\begin{corollary}
\label{c: ss}
Let $\mathcal{T}$ be a bounded
		$C_0$-semigroup. If $i\sigma(A)\cap \R$ is a null set and
		\begin{equation}
		\label{eq: local range ibs}
		 \bigcap_{i\xi \in \sigma_{\PF}(A)\cap iI} \rg(i \xi - A) \text{ is dense in } X 
		\end{equation}
		for each bounded interval $I\subset \R$, then $\mathcal{T}$ is strongly stable.
\end{corollary}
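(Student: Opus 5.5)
This corollary is a direct consequence of the implication $(d)\Rightarrow(a)$ in Theorem \ref{t:StrongStabCharIntro}, so the plan is simply to check that the hypotheses of Corollary \ref{c: ss} imply those of item $(d)$.

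First, boundedness of $\mathcal{T}$ is assumed outright. Second, we need $i\sigma_{\PF}(A)$ to be a null subset of $\R$. As stated in the introduction, $\sigma_{\PF}(A)$ is a closed subset of $\sigma(A)\cap i\R$. This holds because on the open set $i\R\setminus\sigma(A)$ the resolvent $R(\lambda,A)x$ extends analytically, hence continuously, across the boundary. Continuous boundary values are in particular local pseudofunctions, so such points cannot belong to $\sigma_{\PF}(A)$. Consequently $i\sigma_{\PF}(A)\subseteq i\sigma(A)\cap\R$, and a subset of a null set is null. (Here $i\sigma(A)\cap\R$ is read as the set of real $\xi$ with $i\xi\in\sigma(A)$, up to the sign convention $\xi\mapsto-\xi$, which preserves nullity.)

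Third, the local range condition \eqref{eq: local range ibs} is, as printed, literally the same as \eqref{eq: local range}. If one reads it instead with $\sigma(A)$ in place of $\sigma_{\PF}(A)$, which is what the surrounding text suggests, we argue by monotonicity. For every bounded interval $I$ we have $\sigma_{\PF}(A)\cap iI\subseteq\sigma(A)\cap iI$, and therefore
\[
\bigcap_{i\xi\in\sigma(A)\cap iI}\rg(i\xi-A)\subseteq\bigcap_{i\xi\in\sigma_{\PF}(A)\cap iI}\rg(i\xi-A).
\]
A set containing a dense set is dense, so the range condition of $(d)$ follows.

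With all three requirements of $(d)$ verified, Theorem \ref{t:StrongStabCharIntro} yields strong stability. The only point requiring care is the inclusion $\sigma_{\PF}(A)\subseteq\sigma(A)\cap i\R$. This inclusion is asserted in the introduction and will follow from Definition \ref{def:LocalPseudofuncSpect}, since analytic continuation across an interval gives continuous, hence pseudofunction, boundary values. Beyond that, the argument is purely formal.
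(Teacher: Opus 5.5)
Your proof is correct and is essentially the paper's: the corollary follows from the main characterization (Theorem \ref{t:StrongStabCharIntro}(d), equivalently Theorem \ref{Mainresult}(e)), using $\sigma_{\PF}(A)\subseteq\sigma(A)\cap i\R$ from Remark \ref{r:InclusionsSpectra}(ii) for nullity and monotonicity of the intersections for the range condition. The only difference is cosmetic: the paper (Remark \ref{a practical remark}) takes $E=\{\xi \mid i\xi\in\sigma(A)\}$ directly in Theorem \ref{Mainresult}(e), whereas you first pass to $\sigma_{\PF}(A)$, and you are right that, as printed, \eqref{eq: local range ibs} coincides with \eqref{eq: local range} while the intended reading has $\sigma(A)$.
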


In \cite[Theorem 3.4]{B-V-R-LocalSpecInStab}, Batty, van Neerven, and R\"abiger showed an extension of the ABLV theorem that provides sufficient conditions for stability of individual orbits  in terms of the so-called local unitary spectrum $\sigma_{u}(A, x)\subseteq i\R$ at a vector $x\in X$. We shall also introduce the local pseudofunction spectrum $\sigma_{\PF}(A,x)\subseteq i\R$ at $x\in X$ and show that $\sigma_{\PF}(A,x)$  is the right spectral concept to deliver a full characterization of the stability of an orbit, as stated in the next theorem that (strictly) contains \cite[Theorem 3.4]{B-V-R-LocalSpecInStab}. Observe that, in view of the mean ergodic theorem \cite{A-B-H-N-VVLaplaceTransCauchyProb}, the condition \eqref{eq: mean ergodic intro} below is equivalent to $X=\overline{\rg(i\xi-A)}$.

	\begin{theorem}
		\label{t:OrbitStabIntro}
	Let $\mathcal{T}$ be a bounded $C_0$-semigroup and let $x \in X$. The following properties are equivalent:
	\begin{itemize}
	\item [$(a)$] $\lim_{t\to\infty}\|T(t)x\|_{X}=0$;
	\item [$(b)$] $\sigma_{\PF}(A,x)=\varnothing$;
	\item [$(c)$] $\sigma_{\PF}(A,x)$ is countable and, for each  $i\xi \in \sigma_{\PF}(A,x)$,
\begin{equation} \label{eq: mean ergodic intro}
\lim_{\alpha \to 0^+}\alpha \|R(\alpha + i \xi, A) x\|_X = 0
	.
\end{equation}
	\end{itemize}
	\end{theorem}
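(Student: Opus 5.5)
We have to guess the definition of $\sigma_{\PF}(A,x)$: it is the complement in $i\R$ of the set of points $i\xi_0$ that have a neighbourhood $iI$ on which $R(\lambda,A)x$ has $X$-valued local pseudofunction boundary behavior. Concretely, $F(\alpha+i\xi)=R(\alpha+i\xi,A)x$ should converge as $\alpha\to0^+$, in the distributional sense on $I$, to a distribution that is locally the Fourier transform of an $X$-valued function vanishing at infinity. The key observation is that $R(\lambda,A)x=\int_0^\infty e^{-\lambda t}T(t)x\,dt$ is the Laplace transform of the bounded continuous orbit $u(t)=T(t)x$. The plan is therefore to transfer the vector-valued Tauberian theorem of Debruyne--Vindas type (the vector-valued Ingham--Karamata theorem with local pseudofunction boundary behavior announced before Theorem \ref{t:StrongStabCharIntro}) to this setting, together with the identity $T(t)x-x=A\int_0^tT(s)x\,ds$.

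\textbf{Step 1: $(a)\Rightarrow(b)$.} If $u(t)\to0$, then $u\in C_0([0,\infty),X)$ is bounded. Its Laplace transform has distributional boundary value $\hat u$ on the whole of $i\R$, and $\hat u$ is a global pseudofunction because $u$ vanishes at infinity. Hence $\sigma_{\PF}(A,x)=\varnothing$.

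\textbf{Step 2: $(b)\Rightarrow(a)$.} This is the Tauberian direction. I would apply the vector-valued Ingham--Karamata theorem not to $u$ itself but to a slowly oscillating quantity. The theorem needs a Tauberian condition, and here $u$ is only bounded and uniformly continuous on $[\delta,\infty)$ when $x\in D(A)$. So first take $x$ in $D(A)$, or replace $x$ by $y=R(1,A)x$. Then $u_y=T(\cdot)y$ is Lipschitz, since $\|\tfrac{d}{dt}T(t)y\|=\|T(t)Ay\|\le M\|Ay\|$, and
\[ R(\lambda,A)y=R(\lambda,A)R(1,A)x=\frac{R(\lambda,A)x-R(1,A)x}{1-\lambda} \]
by the resolvent identity. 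So $R(\lambda,A)y$ inherits local pseudofunction boundary behavior from $R(\lambda,A)x$, because the factor $(1-\lambda)^{-1}$ is smooth near $i\R$ and multiplication by smooth functions preserves local pseudofunctions. The Tauberian theorem then gives $T(t)y\to0$.

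To return to $x$, note that $T(t)x=T(t)(1-A)y=T(t)y-T(t)Ay$. I would rather proceed as follows: the orbit $u$ is bounded and $u'=Au$ holds in the distributional sense, and we use the convolution form. Boundedness of $\mathcal T$ gives the estimate $\|T(t)x\|\le\|T(t)(x-z)\|+\|T(t)z\|\le M\|x-z\|+\|T(t)z\|$. So it suffices to show $T(t)z\to0$ for $z$ in a set whose closure contains $x$. Take $z=nR(n,A)x\to x$. The same resolvent-identity argument shows that $R(\lambda,A)z$ has local pseudofunction boundary behavior everywhere, and $z\in D(A)$, so $T(t)z\to0$. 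This density-plus-boundedness step is also the ``$\sigma_{\PF}(A,x)$ is stable under $R(\mu,A)$'' lemma, which I expect to be routine.

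\textbf{Step 3: $(b)\Rightarrow(c)$ is trivial. $(c)\Rightarrow(b)$.} Let $E=\sigma_{\PF}(A,x)$, a countable closed set. Suppose $E\neq\varnothing$. By Baire's theorem applied to the countable compact pieces, $E$ has an isolated point $i\xi_0$. Near $i\xi_0$, the function $F$ has pseudofunction boundary behavior on a punctured neighbourhood. The point singularity must therefore be removed using \eqref{eq: mean ergodic intro}. A distribution on an interval that is a local pseudofunction away from $\xi_0$ differs from a pseudofunction by a finite sum $\sum c_j\delta^{(j)}(\xi-\xi_0)$. The bound $\|F(\alpha+i\xi)\|\le M/\alpha$, combined with the Poisson-type estimate, rules out derivatives of order $j\ge1$. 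The $\delta$ coefficient is $c_0=\lim_{\alpha\to0^+}\alpha F(\alpha+i\xi_0)$ up to a constant factor, and this vanishes by \eqref{eq: mean ergodic intro}. Hence $i\xi_0\notin E$, a contradiction. Formally this is a transfinite induction on the Cantor--Bendixson derivatives of $E$, or equivalently the argument that the set of non-removable points is perfect, hence empty.

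\textbf{Main obstacle.} The main obstacle is this removable-singularity lemma for vector-valued local pseudofunctions at an isolated point under the growth bound $O(1/\alpha)$. It requires a careful structure theorem for distributions supported at a point, plus control of the local pseudofunction property in the $X$-valued setting; in the vector-valued case there are no Fourier-multiplier shortcuts. I would handle it by pairing with test functions and using the analogous scalar result of Debruyne--Vindas applied to $\langle F,x^*\rangle$ with uniform estimates in $\|x^*\|\le1$.
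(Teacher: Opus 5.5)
\textbf{Steps 1 and 2.} These are fine. In Step 2 the regularization $z=nR(n,A)x$ is correct but unnecessary. Every orbit of a bounded semigroup is uniformly continuous on $[0,\infty)$, since $\|T(t+h)x-T(t)x\|_X\le \sup_{s\ge0}\|T(s)\|_{L(X)}\|T(h)x-x\|_X$. So Theorem \ref{MainResultGregoryJasson} applies to $\tau(t)=T(t)x$ directly, as in the paper.

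\textbf{The gap in Step 3.} Your removable-singularity claim is false. A distribution that is a local pseudofunction on a punctured neighbourhood of $\xi_0$ need \emph{not} differ there from a local pseudofunction by a finite sum $\sum_j c_j\delta^{(j)}(\xi-\xi_0)$. This fails even for boundary values obeying $\|F(\alpha+i\xi)\|\le M/\alpha$. Take $X=\C$, $T(t)=1$, $A=0$, $x=1$, so that $F(\lambda)=1/\lambda$ and $\sigma_{\PF}(A,1)=\{0\}$. For $\varphi\in\mathcal{D}(\R)$,
\[
\langle F(i\xi),e^{it\xi}\varphi(\xi)\rangle=\int_{-t}^{\infty}\widehat{\varphi}(u)\,\mathrm{d}u\longrightarrow\begin{cases}2\pi\varphi(0), & t\to+\infty,\\ 0, & t\to-\infty,\end{cases}
\]
while $\langle\sum_j c_j\delta^{(j)},e^{it\xi}\varphi(\xi)\rangle$ is a polynomial in $t$. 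For the difference to be $o(1)$ as $t\to-\infty$, that polynomial must vanish identically; the limit at $+\infty$ then gives $2\pi\varphi(0)\neq0$. So no choice of the $c_j$ works.

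In fact, for bounded $\tau$ supported in $[0,\infty)$, the pairing $\langle\widehat{\tau},e^{it\xi}\varphi(\xi)\rangle$ tends to $0$ automatically as $t\to-\infty$. Your structure claim would therefore force all $c_j=0$, making every isolated point of $\sigma_{\PF}(A,x)$ removable with no hypothesis at all. The step ``$c_0=\lim\alpha F(\alpha+i\xi_0)$ vanishes, hence $i\xi_0\notin E$'' therefore has nothing to stand on.

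\textbf{The scalar fallback also fails.} Proving removability for the scalar functions $\langle T(t)x,x^*\rangle$ cannot work, because the scalar Abel-mean version is false. Take $\tau(t)=e^{i(\log(1+t))^{2}}$:
\begin{itemize}
\item it is bounded and uniformly continuous;
\item $\widehat{\tau}$ is a local pseudofunction off $0$, since $\tau'\to0$;
\item $\alpha\L\{\tau;\alpha\}=O(1/\log(1/\alpha))\to0$;
\item yet $0\in\singsupp_{\PF}\widehat{\tau}$, since $|\tau|\equiv1$ and $\tau\not\to0$.
\end{itemize}

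\textbf{The missing idea.} You need the operator-theoretic mechanism of Lemmas \ref{l:Ergodic} and \ref{LocalVersionIsolatedPoints}.
\begin{itemize}
\item By the mean ergodic theorem, $\alpha\|R(\alpha+i\xi_0,A)x\|_X\to0$ is equivalent to $x\in\overline{\rg(i\xi_0-A)}$.
\item If $x=(i\xi_0-A)y$, then $\int_0^te^{-i\xi_0s}T(s)x\,\mathrm{d}s=y-e^{-i\xi_0t}T(t)y$ is bounded. Theorem \ref{t:SuffCondVanishSingSupp} with $E=\{\xi_0\}$ then removes an isolated point.
\item For general $x$, apply this to $x_\alpha=(i\xi_0-A)R(\alpha+i\xi_0,A)x\to x$. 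Its local pseudofunction spectrum is contained in $\sigma_{\PF}(A,x)$ by Proposition \ref{l:CommutingOperator}.
\item Pass to the limit with Proposition \ref{PFClosure}, which uses the uniform bound $\sup_t\|T(t)\|_{L(X)}$.
\end{itemize}
Once this lemma is in place, your Baire/perfect-set reduction finishes the proof exactly as in the paper.
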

	
As an application of Theorem \ref{t:OrbitStabIntro}, we will derive the next description of relatively compact orbits, which in particular yields a spectral characterization of almost periodic semigroups, under the assumption that $\sigma_{\PF}(A)$ is countable.

\begin{theorem}
\label{t:CharAlmostPeriodic}
Let $\mathcal{T}$ be a bounded $C_0$-semigroup and let $x\in X$ be such that $\sigma_{\PF}(A, x)$ is countable. The orbit $\{T(t)x|\:  t\geq 0\}$ is relatively compact if and only if $\lim_{\alpha \to 0^+} \alpha R(\alpha + i \xi, A)x$ exists for each $i\xi \in \sigma_{\PF}(A,x)$.
\end{theorem}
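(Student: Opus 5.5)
The plan is to reduce to Theorem \ref{t:OrbitStabIntro} via the standard splitting of a relatively compact orbit into a stable part and an almost periodic part.

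\emph{Necessity.} Suppose the orbit $\{T(t)x \mid t\geq 0\}$ is relatively compact. Let $Z$ be the closed linear span of the orbit. Then $Z$ is $\mathcal{T}$-invariant and every orbit in $Z$ is relatively compact, so the restricted semigroup is bounded and asymptotically almost periodic on $Z$. By the Jacobs--de Leeuw--Glicksberg decomposition, $Z=Z_0\oplus Z_{ap}$, where $Z_0$ consists of the vectors with vanishing orbit and $Z_{ap}$ is the closed span of eigenvectors for purely imaginary eigenvalues. Write $x=x_0+x_{ap}$. Then $\alpha R(\alpha+i\xi,A)x_0\to 0$ by Theorem \ref{t:OrbitStabIntro}, or directly by Abelian reasoning from $\|T(t)x_0\|\to0$. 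For $x_{ap}$, the limit $\lim_{\alpha\to0^+}\alpha R(\alpha+i\xi,A)y$ exists for every $y\in Z_{ap}$ and equals the spectral projection onto $\ker(i\xi-A)$. Indeed, on eigenvectors $y$ with $Ay=i\eta y$ we have $\alpha R(\alpha+i\xi,A)y=\frac{\alpha}{\alpha+i(\xi-\eta)}y$, which converges to $\delta_{\xi\eta}y$. The family $\alpha R(\alpha+i\xi,A)$ is uniformly bounded in $\alpha$ (by $\sup_t\|T(t)\|$), so density of the eigenvector span extends the convergence to all of $Z_{ap}$. This direction therefore does not need the countability assumption.

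\emph{Sufficiency.} Assume the limits $P_\xi x:=\lim_{\alpha\to0^+}\alpha R(\alpha+i\xi,A)x$ exist for all $i\xi\in\sigma_{\PF}(A,x)=\{i\xi_n\}$.

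\begin{enumerate}
\item Show that each $P_\xi x$ is an eigenvector: $AP_\xi x=i\xi P_\xi x$. This follows from the resolvent identity, since $(i\xi-A)\alpha R(\alpha+i\xi,A)x=\alpha x-\alpha^2R(\alpha+i\xi,A)x\to0$ and $A$ is closed. Then $T(t)P_\xi x=e^{i\xi t}P_\xi x$.
\item Let $Y$ be the closed span of $\{T(t)x\}$, and let $Y_{ap}$ be the closed span of the eigenvectors of $A|_Y$ with imaginary eigenvalues. Pass to the quotient $Y/Y_{ap}$, on which the induced semigroup is again bounded.
\item Show that $\sigma_{\PF}$ of the class $[x]$ in the quotient is contained in $\sigma_{\PF}(A,x)$. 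Local pseudofunction boundary behavior is preserved under bounded linear maps, and the quotient map intertwines the resolvents.
\item At each $i\xi_n$, we get $\alpha R(\alpha+i\xi_n)[x]\to [P_{\xi_n}x]=0$.
\item Apply Theorem \ref{t:OrbitStabIntro}(c)$\Rightarrow$(a) in the quotient to conclude $\|T(t)[x]\|_{Y/Y_{ap}}\to0$.
\end{enumerate}

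Passing from this quotient statement back to relative compactness of the orbit is the main obstacle. Two routes seem available.

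\emph{First route: approximate $x$ directly.} Aim to show that for each $\varepsilon>0$ there is a finite combination $y$ of eigenvectors with $\limsup_t\|T(t)(x-y)\|<\varepsilon$. Then the orbit is totally bounded, because the orbit of $y$ lies in a compact set (a finite-dimensional torus image). Vanishing of the quotient norm only gives $\operatorname{dist}(T(t)x,Y_{ap})\to0$, and that is not enough by itself. So instead I would work with $x-\sum_{n\le N}P_{\xi_n}x$ and use a uniform version of Theorem \ref{t:OrbitStabIntro}.

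\emph{Second route (preferred): an invariant complement.} Use the mean ergodic projections $P_{\xi_n}$, which exist on the closed $\mathcal{T}$-invariant subspace $W$ of vectors $w$ for which all the limits $\lim_{\alpha\to0^+}\alpha R(\alpha+i\xi_n,A)w$ exist. Define $W_0=\bigcap_n\ker P_{\xi_n}\cap W$. Then:
\begin{itemize}
\item On $W_0$, Theorem \ref{t:OrbitStabIntro}(c) applies, using $\sigma_{\PF}(A,w)\subseteq\sigma_{\PF}(A,x)$ for $w$ in the orbit span. This inclusion needs checking.
\item One then needs $x\in W_0\oplus\overline{\operatorname{span}}\{P_{\xi_n}x\}$.
\end{itemize}
Establishing this decomposition, which is essentially a countable-spectrum version of the Jacobs--Glicksberg splitting, is where I expect the real work to lie.
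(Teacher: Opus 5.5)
The necessity direction (Jacobs--de Leeuw--Glicksberg splitting plus the uniform bound $\|\alpha R(\alpha+i\xi,A)\|_{L(X)}\le\sup_{t\ge0}\|T(t)\|_{L(X)}$) is fine. Steps 1--5 of your sufficiency argument are also correct, and they follow the paper's route: quotient out an ``almost periodic'' subspace and apply Theorem \ref{t:OrbitStabIntro} to the class of $x$.

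\textbf{The gap is the final step, which you leave open, and the obstacle you name there is not real.} You say that $\operatorname{dist}(T(t)x,Y_{ap})=\|\widetilde{T}(t)[x]\|_{Y/Y_{ap}}\to0$ is ``not enough by itself''. In fact it is enough, for two reasons.

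\emph{Every $y\in Y_{ap}$ has relatively compact orbit.} An eigenvector $z$ with $Az=i\eta z$ has orbit $\{e^{i\eta t}z\}$. A finite combination of eigenvectors has its orbit inside a continuous image of a finite-dimensional torus. If $y_n\to y$, then $\sup_{t\ge0}\|T(t)y-T(t)y_n\|_X\le M\|y-y_n\|_X$ with $M=\sup_{t\ge0}\|T(t)\|_{L(X)}$, so total boundedness of orbits passes to norm limits.

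\emph{Closeness at a single time propagates along the whole orbit.} Given $\varepsilon>0$, step 5 gives some $t_0$ and $y\in Y_{ap}$ with $\|T(t_0)x-y\|_X\le\varepsilon$. Then $\|T(t_0+t)x-T(t)y\|_X\le M\varepsilon$ for all $t\ge0$, so
\[
\{T(t)x\mid t\ge0\}\subseteq K_\varepsilon+M\varepsilon\overline{B}, \qquad K_\varepsilon=\{T(t)x\mid 0\le t\le t_0\}\cup\overline{\{T(t)y\mid t\ge0\}},
\]
where $K_\varepsilon$ is compact and $\overline{B}$ is the closed unit ball of $X$. Hence the orbit of $x$ is totally bounded. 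This is exactly how the paper concludes. It quotients by the space $X_{\text{ap}}$ of all vectors with relatively compact orbit, which makes the first point automatic, and it renorms so that $M=1$.

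Because you missed this, you fall back on two further routes, and neither is carried out. The ``uniform version'' of Theorem \ref{t:OrbitStabIntro} is never formulated. The decomposition $x\in W_0\oplus\overline{\operatorname{span}}\{P_{\xi_n}x\}$ is, as you say yourself, unproven, and it is essentially as strong as the conclusion you want. So as written, sufficiency is not established. Adding the two observations above after your step 5 closes the gap and makes both alternative routes unnecessary.
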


In their seminal work, Katznelson and Tzafriri connected the asymptotic theory of power bounded operators with spectral synthesis on the unit circle. Spectral synthesis is a concept of much relevance in harmonic analysis, see \cite{benedetto,K-IntroHarmAnal}. In the case of the torus $\mathbb{T}$, given a power series $F(z)=\sum_{n=0}^{\infty} a_n z^n$ with absolutely convergent coefficients $(a_n)\in \ell^{1}$, one says that it is of spectral synthesis with respect to a closed subset $E\subseteq \mathbb{T}$ if the Fourier series $F(e^{i\theta})$ is of spectral synthesis with respect to it. An operator $T$ on a Banach space $X$ is called power bounded if $\sup_{n\in \N}\|T^{n}\|_{L(X)}<\infty$.

\begin{theorem}[{\cite[Theorem 5 \&  comments on p.~319]{K-T-PowerBoundOp}}] \label{t:K-Tintro} Let $E\subset \mathbb{T}$ be a closed null set. A power series $F$ with absolutely convergent coefficients is of spectral synthesis with respect to $E$ if and only if $\lim_{n\to\infty} \|T^{n}F(T)\|_{L(X)}=0$ for all power bounded operator $T$ with $\sigma(T)\cap \mathbb{T}\subseteq E$.
\end{theorem}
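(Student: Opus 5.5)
The proof splits into the two implications. Throughout, $A(\mathbb{T})$ is the Wiener algebra, and $J(E)$ is the closure in $A(\mathbb{T})$ of the functions vanishing on a neighbourhood of $E$. Recall that $F$ is of spectral synthesis with respect to $E$ exactly when $F(e^{i\theta})\in J(E)$.

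\emph{Necessity.} I would build a single "universal" operator and test the hypothesis on it.
\begin{itemize}
\item Take the quotient Banach space $X=A(\mathbb{T})/J(E)$ and let $T$ be multiplication by $e^{i\theta}$.
\item $T$ is an invertible isometry, since $e^{\pm i\theta}$ has norm one in $A(\mathbb{T})$ and $J(E)$ is an ideal. In particular $T$ is power bounded.
\item The maximal ideal space of $A(\mathbb{T})/J(E)$ is $E$, because the hull of $J(E)$ is $E$. Hence $\sigma(T)= E\subseteq\mathbb{T}$.
\item For a power series $F$ with $\ell^1$ coefficients, $F(T)=\sum a_nT^n$ acts as multiplication by $F(e^{i\theta})$.
\item Since $T$ is an isometry, $\|T^nF(T)\|=\|F(T)\|$ for every $n$. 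So the hypothesis $\|T^nF(T)\|\to0$ forces $F(T)=0$.
\item Evaluating at the class of $1$ then gives $F(e^{i\theta})\in J(E)$, which is spectral synthesis.
\end{itemize}
This direction does not use that $E$ is null.

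\emph{Sufficiency.} Let $T$ be power bounded with $\sigma(T)\cap\mathbb{T}\subseteq E$. I would apply a Tauberian theorem for power series with local pseudofunction boundary behaviour, in the spirit of Katznelson--Tzafriri, to the bounded $L(X)$-valued sequence $a_n=T^nF(T)$.
\begin{itemize}
\item Its generating function is $\Phi(z)=\sum a_n z^n=F(T)(I-zT)^{-1}$ for $|z|<1$.
\item This $\Phi$ extends analytically across every arc of $\mathbb{T}$ avoiding $\overline{E}^{-1}$, since the resolvent of $T$ is analytic there. So the boundary distribution $\Phi(e^{i\theta})$ is a local pseudofunction off the null closed set $E^{-1}$.
\item Because $(a_n)$ is bounded, $\Phi(e^{i\theta})$ is globally an operator-valued pseudomeasure.
\item The remaining task is to show that $\Phi(e^{i\theta})$ is a pseudofunction near $E$ as well. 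The Tauberian theorem then gives $\|a_n\|\to0$.
\item To make the theorem valid in operator norm, I would run it in the Banach space $L(X)$ itself rather than pointwise on vectors.
\end{itemize}

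\emph{Main obstacle.} The hard step is the pseudofunction behaviour of $\Phi$ at points of $E$, and this is where both synthesis and nullness must enter.
\begin{itemize}
\item Using synthesis, I would approximate $F$ in $A(\mathbb{T})$ by functions $G_k$ vanishing on neighbourhoods $U_k$ of $E$.
\item I would then compare $\Phi$ with the formal products $G_k(e^{i\theta})\cdot\sum_n T^n e^{in\theta}$. The latter is a pseudomeasure with bounded coefficients, and multiplying it by $G_k$ kills it on $U_k$.
\item The errors are controlled by $\|F-G_k\|_{A}\sup_n\|T^n\|$ in the pseudomeasure norm, so $\Phi$ is a limit of pseudofunctions in that norm. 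Since pseudofunctions form a closed subspace of pseudomeasures, $\Phi$ is then a pseudofunction.
\item Making the product $G_k\cdot\sum_n T^ne^{in\theta}$ rigorous requires the action of $A(\mathbb{T})$ on pseudomeasures. The delicate point is that the $G_k$ are not power series, so $G_k(T)$ is not directly defined for non-invertible $T$.
\item I expect nullness of $E$ to be what repairs this. A pseudomeasure supported on a null set and vanishing off it in the pseudofunction sense can be handled by the Katznelson--Tzafriri localisation. Alternatively, one could first reduce to $T$ invertible with the same peripheral spectrum.
\end{itemize}
I would try first to prove the product estimate directly at the level of the Fourier coefficients $\sum_m \widehat{G_k}(m)\,a_{n-m}$.
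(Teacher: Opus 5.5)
Your necessity argument is correct and complete. $J(E)$ is invariant under multiplication by $e^{\pm i\theta}$, so $T$ is an invertible isometry of $A(\mathbb{T})/J(E)$. Gelfand theory gives $\sigma(T)=E$, and $\|T^nF(T)\|=\|F(T)\|$ forces $[F]=F(T)[1]=0$. The paper only quotes this statement from Katznelson--Tzafriri. In its continuous analogue, Theorem~\ref{t:KatznelsonTzafriri}, the same direction goes through Bourgain's idea instead: a Hahn--Banach functional $\kappa\in L^\infty$ with $\supp\widehat\kappa\subseteq E$, the isometric multiplication semigroup on pseudomeasures restricted to the closure of $\mathcal{A}(\R)\cdot\widehat\kappa$, and a computation of $\sigma_{\uPF}$. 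Your quotient is essentially the dual side of that construction. It is cleaner, because the spectral inclusion comes straight from Gelfand theory.

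The gap is in sufficiency, at the step you flag as the main obstacle. What is missing is an identity, not nullness of $E$.

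\emph{The missing identity.} Since $F(T)=\sum_{m\ge0}a_mT^m$ converges absolutely, $T^nF(T)=\sum_{m\ge0}a_mT^{n+m}$. Hence $(T^nF(T))_{n\ge0}$ are the nonnegative-index Fourier coefficients of $F(e^{-i\theta})\cdot\sum_{j\ge0}T^je^{ij\theta}$, with the \emph{reflected} function. The negative-index coefficients $\sum_{m\ge-n}a_mT^{n+m}$ tend to $0$.

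\emph{Why your product fails.} The products you propose, $G_k(e^{i\theta})\sum_jT^je^{ij\theta}$, have coefficients $\sum_{m\le n}\widehat{G_k}(m)T^{n-m}$. For $G_k=F$ these are the coefficients of $F(z)(I-zT)^{-1}$, not of $\Phi(z)=F(T)(I-zT)^{-1}$. Moreover $G_k(e^{i\theta})$ vanishes near $E$, while the singularities of $\sum_jT^je^{ij\theta}$ lie on $E^{-1}=\{\bar z: z\in E\}$, as you yourself noted for $\Phi$. The convolution $\sum_m\widehat{G_k}(m)a_{n-m}$ in your last sentence has the same defect: it must be taken against $(T^j)$, not against $(a_j)$.

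\emph{The repair.} With the reflection in place, the difficulty you anticipate disappears. For any $G\in A(\mathbb{T})$ set $b_n(G)=\sum_{m\ge-n}\widehat G(m)T^{n+m}$. This series converges absolutely, $\|b_n(G)\|\le\|G\|_A\sup_j\|T^j\|$, and $b_n(F)=T^nF(T)$ for $n\ge0$. So neither $G(T)$ nor invertibility of $T$ is ever needed.

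Now suppose $G$ vanishes near $E$. Then $G(e^{-i\theta})\sum_jT^je^{ij\theta}$ vanishes near $\{\theta: e^{-i\theta}\in E\}$. Elsewhere it equals $G(e^{-i\theta})(I-e^{i\theta}T)^{-1}$, an $A(\mathbb{T})$ function times a $C^\infty$ one, because $\sigma(T)\cap\mathbb{T}\subseteq E$. A partition of unity on the compact circle shows it is a pseudofunction, so $b_n(G)\to0$. Applying this to the $G_k$ gives $\limsup_n\|T^nF(T)\|\le\sup_j\|T^j\|\,\|F-G_k\|_A\to0$.

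This is the discrete counterpart of the paper's proof of $(a)\Rightarrow(b),(c)$ in Theorem~\ref{t:KatznelsonTzafriri}. There $T(t)\widehat f(T)x$ is written as a uniform limit of $\langle R(i\xi,A)x,e^{it\xi}\varphi_n(\xi)\rangle$, and the reflection is absorbed into the convention $i\sigma_{\PF}(A)\subseteq E$. No Tauberian theorem is needed: on the circle, local pseudofunction behaviour everywhere just means $c_0$ coefficients. Nullness of $E$ is not used in this direction either. It only makes the statement non-vacuous, since by the F.~and M.~Riesz theorem only $F\equiv0$ can be of synthesis with respect to a set of positive measure.
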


Generalizations of the Katznelson-Tzafriri theorem to the one-parameter continuous case were established in \cite{B-V-R-LocalSpecInStab,E-S-Z-StabAsympSemGroup, V-ThmKatznelsonTzafriri}.
In Section \ref{sec:KatznelsonTzafriri}, we will obtain optimal forms of the Katznelson-Tzafriri theorem on the real line that sharpen those results and that are the complete analogs of Theorem \ref{t:K-Tintro} for the three spectra $\sigma(A)$, $\sigma_{\PF}(A)$, and $\sigma_{\PF}(A,x)$. We only state here the case involving the local pseudofunction spectrum, and refer to Theorem \ref{t:KatznelsonTzafriri} below for the other two cases.

\begin{theorem}
	\label{t:KatznelsonTzafririIntro}
	Let $f \in L^1(\R_+)$ and let $E\subset\R$ be a closed null set.
	The function $f$ is of spectral synthesis with respect to $E$ if and only if for any bounded $C_0$-semigroup $\mathcal{T}$ the following property holds:
		\begin{equation*}
		\label{eq:KatznelsonTzafririIntro} 
		i \sigma_{\PF}(A) \subseteq E \quad \Longrightarrow \quad \lim_{t \to \infty} \Big\| \int_0^\infty f(s)T(t + s) x \,\mathrm{d}s  \Big\|_{X} = 0,  \quad \forall x \in X . 
	\end{equation*}
\end{theorem}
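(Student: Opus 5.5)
We prove the two implications separately. Throughout, $\hat f(\xi)=\int_0^\infty f(s)e^{-i\xi s}\,\mathrm{d}s$, and \emph{spectral synthesis with respect to $E$} means: $\hat f$ vanishes on $E$, and $f$ lies in the closed ideal of $L^1(\R)$ generated by functions whose Fourier transforms vanish on a neighbourhood of $E$. For a closed null set $E$ this is equivalent to the following: for every bounded, uniformly continuous $\phi$ with $\operatorname{sp}(\phi)\subseteq E$, we have $f*\phi=0$.

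\textbf{Sufficiency.} Fix a bounded $C_0$-semigroup with $i\sigma_{\PF}(A)\subseteq E$ and fix $x\in X$. Put $y=\int_0^\infty f(s)T(s)x\,\mathrm{d}s$, so that $T(t)y=\int_0^\infty f(s)T(t+s)x\,\mathrm{d}s$. By Theorem~\ref{t:OrbitStabIntro} it suffices to show $\sigma_{\PF}(A,y)=\varnothing$. The resolvent of $y$ factorizes as
\[
R(\lambda,A)y=\int_0^\infty f(s)T(s)R(\lambda,A)x\,\mathrm{d}s .
\]
Off $iE$ we have $\sigma_{\PF}(A,y)\subseteq\sigma_{\PF}(A,x)\subseteq\sigma_{\PF}(A)\subseteq iE$. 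Indeed, local pseudofunction behaviour of $R(\lambda,A)x$ is preserved under convolution with $L^1$ functions: multiplying by $\hat f$, which is a continuous multiplier (a Fourier transform of an $L^1$ function), keeps a distribution a local pseudofunction.

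It remains to rule out points of $iE$. Using synthesis, we approximate $f$ in $L^1$ by $f_n$ with $\hat f_n=0$ near $E$. For each $n$, $y_n=\int_0^\infty f_n(s)T(s)x\,\mathrm{d}s$ has empty local pseudofunction spectrum, because the boundary singularities are killed near $E$ and the previous paragraph handles the rest. Hence $T(t)y_n\to0$. Since
\[
\|T(t)(y-y_n)\|_X\le M\|f-f_n\|_{L^1}\|x\|_X ,
\]
a $3\varepsilon$ argument gives $T(t)y\to0$. The subtle point in this approximation step is the one-sidedness: $f_n$ need not be supported on $\R_+$. So I would work with the bounded uniformly continuous orbit $u(t)=T(t)x$ extended by $0$ (or via weak limits), and use the one-sided Tauberian machinery behind Theorem~\ref{t:OrbitStabIntro}, which handles local pseudofunction behaviour of Laplace transforms of bounded functions.

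\textbf{Necessity.} Suppose $f$ is not of spectral synthesis for $E$. Then there is a bounded uniformly continuous $\phi$ with $\operatorname{sp}(\phi)\subseteq E$ and $f*\check\phi\neq0$, where $\check\phi(t)=\phi(-t)$. Take $X$ to be the closed translation-invariant subspace of $BUC(\R)$ generated by $\phi$, with the left shift group $T(t)g=g(\cdot+t)$, restricted to $t\ge0$. This is a bounded (isometric) $C_0$-semigroup, and its generator spectrum satisfies $i\sigma(A)\subseteq\operatorname{sp}(\phi)\subseteq E$ up to sign convention; hence $i\sigma_{\PF}(A)\subseteq E$. However,
\[
\int_0^\infty f(s)T(t+s)\phi\,\mathrm{d}s=\bigl(\text{a reflected convolution of } f \text{ with } \phi\bigr)(\cdot+t)
\]
is a nonzero element of a translation-invariant space under an isometric group. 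Its norm is therefore constant in $t$ and does not tend to $0$. Matching the sign and reflection conventions so that the relevant convolution is nonzero is routine. The main obstacle I expect is the sufficiency approximation step, specifically justifying that the $L^1(\R)$-approximants can be used with a one-sided semigroup. If that fails, I would instead run the argument through the Katznelson--Tzafriri-type estimate with $\sigma_{\PF}$ replacing $\sigma$, localized by partitions of unity on $\R\setminus E$.
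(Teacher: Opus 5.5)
\paragraph{The gap in sufficiency.} The sufficiency half breaks at the claim that $y_n=\int_0^\infty f_n(s)T(s)x\,\mathrm{d}s$ has empty local pseudofunction spectrum, so that $T(t)y_n\to0$.

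That integral only sees $f_n\mathbf{1}_{\R_+}$, whose Fourier transform does \emph{not} vanish near $E$. In fact, for $E\neq\varnothing$ every nonzero synthesis approximant must be two-sided. A nonzero function supported in $\R_+$ cannot have Fourier transform vanishing on an interval, because that transform is the boundary value of a bounded analytic function on a half-plane. So the truncation always destroys the vanishing.

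Concretely, take $X=\C$ and $T(t)=e^{-i\eta t}$. Then $i\sigma_{\PF}(A)=\{\eta\}=:E$, and since $\widehat{f}_n(\eta)=0$,
\[
T(t)y_n=-e^{-i\eta t}\Bigl(\int_{-\infty}^0 f_n(s)e^{-i\eta s}\,\mathrm{d}s\Bigr)x .
\]
This has constant modulus and is nonzero in general, and then $\sigma_{\PF}(A,y_n)=\{-i\eta\}$. You flag the one-sidedness but do not resolve it. Incidentally, the orbit extended by $0$ is not uniformly continuous, since it jumps at $t=0$.

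\paragraph{The repair.} Compare $T(t)y$ not with $T(t)y_n$ but with $w_n(t)=\int_0^\infty f_n(u-t)T(u)x\,\mathrm{d}u$. Because $f$ vanishes on $(-\infty,0)$, one has $T(t)y=\int_0^\infty f(u-t)T(u)x\,\mathrm{d}u$. Hence $\sup_{t\geq0}\|T(t)y-w_n(t)\|_X\leq M\|x\|_X\|f-f_n\|_{L^1(\R)}$, where $M=\sup_{t\geq0}\|T(t)\|_{L(X)}$.

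By Lemma~\ref{l:SpecSynthRegSeq}, choose $f_n=\widehat{\varphi}_n$ with $\varphi_n\in\D(\R)$ vanishing near $-E$. Note that $i\sigma_{\PF}(A)\subseteq E$ puts the pseudofunction singular support of $R(i\,\cdot\,,A)x$ inside $-E$, not $E$. Then $w_n(t)=\langle R(i\xi,A)x,e^{it\xi}\varphi_n(\xi)\rangle$, which tends to $0$ directly by \eqref{eq:LocalPseudoTestFunc}.

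This is precisely the paper's argument, and it is purely Abelian:
\begin{itemize}
\item neither Theorem~\ref{t:OrbitStabIntro} nor any Tauberian input is needed;
\item your preliminary inclusion $\sigma_{\PF}(A,y)\subseteq\sigma_{\PF}(A,x)$ becomes superfluous (it is Proposition~\ref{l:CommutingOperator}, not a multiplier identity, since the Fourier transform of $T(\cdot)y$ equals $\widehat f(-\xi)R(i\xi,A)x$ only modulo $\PFloc(\R;X)$);
\item it works under the weaker hypothesis $i\sigma_{\PF}(A,x)\subseteq E$ of Theorem~\ref{t:KatznelsonTzafriri}(b).
\end{itemize}

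\paragraph{Necessity.} Your necessity argument is correct up to the sign bookkeeping you mention. For the left shift one has $\sigma(A)=i\operatorname{sp}(\phi)$, i.e.\ $i\sigma(A)=-\operatorname{sp}(\phi)$, so one takes $\operatorname{sp}(\phi)\subseteq-E$. It genuinely differs from the paper's route.

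The paper first reduces to the orbit version, Theorem~\ref{t:KatznelsonTzafriri}(b), by restricting $\mathcal{T}$ to the closed span of an orbit and invoking Proposition~\ref{PFClosure}. It then uses the modulation semigroup on $\PM(\R)$ with the exact identity $i\sigma_{\PF}(A,\psi)=\supp\psi$.

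You instead use a single semigroup, the shift on the translation-invariant subspace of $BUC(\R)$ generated by $\phi$. You only need $\sigma_{\PF}(A)\subseteq\sigma(A)\cap i\R$ and the standard fact $\sigma(A)=i\operatorname{sp}(\phi)$. This buys you the following:
\begin{itemize}
\item no reduction step and no computation of $\sigma_{\PF}$ are required;
\item the same example already satisfies the more restrictive hypothesis $i\sigma(A)\cap\R\subseteq E$, so it also refutes Theorem~\ref{t:KatznelsonTzafriri}(c);
\item strong continuity is automatic in $BUC(\R)$. The modulation semigroup on all of $\PM(\R)$ is conjugate to translation on $L^\infty(\R)$ and is not strongly continuous, so the paper's model must be restricted to a suitable subspace, as it does in its proof of $(c)\Rightarrow(a)$.
\end{itemize}
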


The article is organized as follows.  We discuss some preliminary material on local pseudofunctions and (distributional) boundary values of vector-valued analytic functions in Section \ref{Pseudo}. We provide there Banach space valued versions of several results from \cite{D-V-ComplexThmLaplaceLocalPseudo}; in particular, Theorems \ref{t:SuffCondVanishSingSupp} and \ref{MainResultGregoryJasson} will serve as the primary Tauberian tools in our analysis. 
In Section \ref{section lpfs}, we introduce and study the local pseudofunction spectra $\sigma_{\PF}(A) $ and $\sigma_{\PF}(A,x) $ at $x\in X$ for the infinitesimal generator $A$ of a bounded $C_0$-semigroup. We supply some examples that clarify how these local pseudofunction spectra may drastically differ from $\sigma(A)\cap i\R$ and the local unitary spectrum $\sigma_{u}(A,x)$ at $x\in X$. Section \ref{sec:StabOrbits} is devoted to proving Theorem \ref {t:OrbitStabIntro}, while we establish a refinement of Theorem \ref{t:StrongStabCharIntro} in Section \ref{sec:StrongStabilityChar}. 
We take a detour and consider semi-uniformly stable $C_0$-semigroups \cite{B-D-NonUniformStab,C-S-T-SemiUniformStabOpSemigroupEnergyDecay} in Section \ref{sec:SemiUniform}. Since the resolvent $R(\lambda,A)$ has $L(X)$-valued distributional boundary values on the imaginary axis (see Subsection \ref{sub: bv and taub} and Section \ref{section lpfs}), it is natural to define a uniform version of $\sigma_{\PF}(A)$, the uniform local pseudofunction spectrum denoted by $\sigma_{\uPF}(A)$, as the complement of the largest open subset of $i\R$ where $R(\lambda,A)$ has $L(X)$-valued local pseudofunction boundary behavior. We show in Section \ref{sec:SemiUniform} that actually $\sigma_{\uPF}(A)=\sigma(A)\cap i\R$ and recover the following result \cite{B-D-NonUniformStab}: a bounded $C_0$ semigroup is semi-uniformly stable if and only if $\sigma(A)\cap i\R=\varnothing$. 
In Section \ref{sec:KatznelsonTzafriri}, we show our optimal version of the Katznelson-Tzafriri theorem on the real line, which contains Theorem \ref{t:KatznelsonTzafririIntro}. In addition, we study connections between our Katznelson-Tzafriri theorem and Wiener kernels in Section \ref{Wiener kernels}. The final Section \ref{sec:AlmostPeriodic} deals with our characterization of almost periodic semigroups, and in particular we prove Theorem \ref{t:CharAlmostPeriodic} there.

Finally, we conclude this introduction by mentioning that the scope of this paper is the study of the \emph{qualitative} decay of $C_0$-semigroups, which, as explained above, we demonstrate to be fully determined by the local pseudofunction spectrum of the generator. There is on the other hand a substantial body of literature dealing with quantitative aspects of stability theory, where quantified decay rates may be deduced if one requires stronger regularity and growth bound hypotheses on the resolvent. The interested reader can consult \cite{B-C-T2016, B-D-NonUniformStab, B-T-OptPolyDecayFuncOpSemigroups,C-N-V-OptDecaySemiUniformStableOpSemigroups, C-S-QuantIngham, C-S-T-SemiUniformStabOpSemigroupEnergyDecay, D-S-AbstractOptDecayFuncOpSemigroups, D-S-OptQuantifiedInghamKaramata, R-S-S-OptRatesDecayOpSemigroupsHilbert, S-C24, S-DecayC0Semigroups} for this topic. We also bring the reader's attention to the recent reference \cite{d26}, which develops a general remainder Tauberian theory for Laplace transforms.

\section{Vector-valued local pseudofunctions and Tauberian theorems}\label{Pseudo}
In this section, we collect some background material on local pseudofunctions and local pseudofunction boundary behavior of analytic functions in the vector-valued setting and explain their connection with Tauberian theorems. In particular,  we introduce the \emph{local pseudofunction singular support} of vector-valued distributions, and show (Lemma \ref{l:SingSuppEmpty<=>o(1)}) how having empty local pseudofunction singular support allows one to deduce that a vector-valued function vanishes at infinity if it satisfies natural Tauberian conditions. We then present two situations in which being empty for the singular support follows from being contained in a null set together with some extra local Tauberian conditions (Theorems \ref{t:SuffCondVanishSingSupp} and \ref{MainResultGregoryJasson}). These results form the basis for our analysis of stability in the next sections and may be viewed as the vector-valued adaptations of the scalar-valued Tauberian results obtained by Debruyne and the last-named author in \cite{D-V-ComplexThmLaplaceLocalPseudo}.

\subsection{Vector-valued function and distribution spaces}
Given two topological vector spaces $V$ and $W$, we denote by $L(V, W)$ the space of all continuous linear maps $V \to W$. As customary, we simply write $L(V)=L(V,V)$.

Let $Z$ be a Banach space---an assumption we will make throughout this section. We denote as $C_0(\R; Z)$  the space of all continuous $Z$-valued functions that vanish at infinity.

For any open set $I \subseteq \R$, the space of $Z$-valued distributions on $I$ is given by $\D^\prime(I; Z) = L(\D(I), Z)$, where $\D(I)$ denotes the space of smooth functions with compact support in $I$, endowed with its natural strict $(LF)$-space topology \cite{S-ThDist}.
We also consider the space $\S^\prime(\R; Z) = L(\S(\R), Z)$ of $Z$-valued tempered distributions, where
	\[ \S(\R) = \{ \varphi \in C^\infty(\R) \mid \sup_{t \in \R} (1 + |{t}|)^k |\varphi^{(n)}({t})| < \infty \text{ for all } n, k \in \N \} \]
is the Schwartz space of rapidly decreasing smooth functions \cite{S-ThDist}, which carries its canonical Fr\'echet space topology. We write $\langle f,\varphi\rangle \in Z$, or $\langle f(t),\varphi(t)\rangle \in Z$ with the use of a dummy variable of evaluation, for the evaluation of a $Z$-valued distribution $f$  at a test function $\varphi$. Accordingly, sometimes we also write expressions like $f(t)\in \mathcal{D}'(I;Z)$, to be interpreted as $f\in \mathcal{D}'(I;Z)$. As usual, $Z$-valued locally integrable functions $f$ are regarded as $Z$-valued distributions via $\langle f(t),\varphi(t)\rangle=\int_{-\infty}^{\infty}\varphi(t) f(t)\, \mathrm{d}t$.

We fix the constants in the Fourier transforms as 
$$\widehat{\varphi}(\xi)=\mathcal{F}\{\varphi;\xi\}=\int_{-\infty}^{\infty}e^{-i\xi t}\varphi(t)\:\mathrm{d}t.
$$ 
Since the Fourier transform $\mathcal{F}:\mathcal{S}(\R)\to \mathcal{S}(\R)$ is a Fr\'{e}chet space automorphism, we can extend it to $f\in\mathcal{S}'(\mathbb{R};Z)$ by transposition, that is, if $f\in\mathcal{S}'(\mathbb{R}; Z)$, its Fourier transform is the tempered distribution $\widehat{f}\in\mathcal{S}'(\mathbb{R};Z)$ determined by $\langle \widehat{f}(\xi),\varphi(\xi)\rangle=\langle f(t), \widehat\varphi(t)\rangle\in Z$ for all test functions $\varphi\in \mathcal{S}(\mathbb{R})$.

\subsection{Vector-valued local pseudofunctions}\label{sub v-v pseudofunctions}
We now introduce the following notions.

\begin{definition}
Let $I \subseteq \R$ be open.

\begin{itemize}
\item[$(i)$] The space of ($Z$-valued global) pseudofunctions  is defined as $\PF(\mathbb{R};Z)=\mathcal{F}(C_{0}(\R;Z))$.

\item[$(ii)$] Given a point $\xi_0 \in I$, we say $f \in \D^\prime(I; Z)$ is a \emph{local pseudofunction at $\xi_0$} and write $f\in \PFloc(\xi_{0};Z)$ if $f$ coincides with a pseudofunction on some open neighborhood $I' \subseteq I$ of $\xi_0 \in I'$.

\item[$(iii)$] We write
	\[ \PFloc(I; Z) = \{ f \in \D^\prime(I; Z) \mid f\in \PF(\xi_0;Z) \text{ for each } \xi_0 \in I \}, \]
for the space of local pseudofunctions on $I$.
\end{itemize}
\end{definition}

Since obviously $\mathcal{S}(\R)\ast C_{0}(\R; Z)\subset C_{0}(\R;Z)$, the space of pseudofunctions  becomes a multiplication module over $\mathcal{S}(\R)$. This clearly induces a multiplication module structure on $\PFloc(I; Z)$ over the algebra $C^{\infty}(I; Z)$. Just as $\mathcal{D}'(\,\cdot\,; Z)$, it is then very important to point out that such a multiplication and the existence of $C^{\infty}$-partitions of unity yield that $\PFloc(\,\cdot\,; Z)$ is a fine sheaf of vector spaces (the sheaf restriction mappings being given by restriction in the sense of distributions). In particular, 
a given distribution $f \in \D^\prime(I; Z)$ is a local pseudofunction on the open subset $I \subseteq \R$ if and only if, for any $\varphi \in \D(I)$,
	\begin{equation}
		\label{eq:LocalPseudoTestFunc}
		\| \ev{f(\xi)}{e^{it\xi} \varphi(\xi)} \|_Z = o(1) , \qquad |t| \to \infty . 
	\end{equation}
Hence, $L^1_{\loc}(I; Z) \subset \PFloc(I; Z)$, with strict inclusion (see Lemma \ref{l:SuffCondNonContFourier} and Example \ref{ex:PFEmptyButNotContinuous}), in view of the classical Riemann-Lebesgue lemma.

\begin{definition}
Let $I\subseteq \R$ be open. The local pseudofunction singular support of $f \in \D^\prime(\R; Z) $ is the closed set defined as
	\[ 
	\singsupp_{\PF} f = \R \setminus \{ \xi_0 \in \R \mid f \in\PFloc(\xi_0;Z) \} .  
	\]
\end{definition}

Clearly, the local pseudofunction singular support is the complement of the largest open subset where $f$ is a local pseudofunction.

When imposing specific Tauberian conditions on a function $\tau$, the assumption 
	\[ \singsupp_{\PF} \widehat{\tau} =\varnothing \] 
can give information on the asymptotic behavior of $\tau$. The latter problem has been extensively studied in \cite{D-V-ComplexThmLaplaceLocalPseudo} (in the scalar-valued case).
We will consider here the classical Tauberian condition of slow oscillation \cite{K-TaubTh}.
The function $\tau$ is \emph{slowly oscillating} (at $\pm\infty$) if
	\begin{equation}\label{eqso} \lim_{\delta \to 0+} \limsup_{|t| \to \infty} \sup_{s\in[0,\delta]} \|\tau(t+s)-\tau(t)\|_Z = 0 . \end{equation}

\begin{lemma}
\label{l:SingSuppEmpty<=>o(1)}
Suppose $\tau \in L^1_{\loc}(\R; Z)$ is slowly oscillating.
Then, $\singsupp_{\PF} \widehat{\tau} =\varnothing$ if and only if
	\[ \tau(t) = o(1) , \qquad \text{as } |t| \to \infty . \]
\end{lemma}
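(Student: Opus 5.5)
The plan is to translate the local pseudofunction condition \eqref{eq:LocalPseudoTestFunc} into a statement about convolutions of $\tau$, and then to use slow oscillation to pass from averages of $\tau$ back to $\tau$ itself.

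\emph{Preliminaries.} First I check that $\widehat{\tau}$ makes sense. Slow oscillation gives $\delta>0$ and $T>0$ such that $\|\tau(t+s)-\tau(t)\|_Z\le 1$ for $|t|\ge T$ and $s\in[0,\delta]$. Chaining these steps yields
\[
\|\tau(t)-\tau(u)\|_Z\le 1+|t-u|/\delta
\]
whenever $t,u$ lie on the same side of $[-T,T]$. Together with $\tau\in L^1_{\loc}$, this shows that $\tau$ has at most linear growth (in an $L^1$-averaged sense near the origin). Hence $\tau\in\S'(\R;Z)$.

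For $\varphi\in\D(\R)$ we have
\[
\langle\widehat\tau(\xi),e^{it\xi}\varphi(\xi)\rangle=\int_{-\infty}^{\infty}\tau(u)\,\widehat\varphi(u-t)\,\mathrm{d}u=(\tau*\check{\widehat\varphi})(t),
\]
where $\check{\widehat\varphi}(s)=\widehat\varphi(-s)$ is the reflected Schwartz function. Since $\PFloc(\,\cdot\,;Z)$ is a sheaf, $\singsupp_{\PF}\widehat\tau=\varnothing$ means exactly that $\tau*\psi(t)\to0$ as $|t|\to\infty$ for every $\psi$ whose Fourier transform lies in $\D(\R)$.

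\emph{($\Leftarrow$).} Assume $\tau=o(1)$ and fix $\varphi\in\D(\R)$. I split $\tau=\tau\chi_{[-R,R]}+\tau\chi_{\R\setminus[-R,R]}$ and use the identity above.
\begin{itemize}
\item The compactly supported $L^1$ piece convolved with a Schwartz function tends to $0$ as $|t|\to\infty$.
\item The remaining piece has sup norm at most $\varepsilon$ once $R$ is large, so its convolution with the Schwartz function is bounded by $\varepsilon\|\widehat\varphi\|_{L^1}$.
\end{itemize}
Since $\varepsilon$ is arbitrary, \eqref{eq:LocalPseudoTestFunc} holds for every $\varphi$.

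\emph{($\Rightarrow$).} Fix a kernel $K\ge0$ with $\int K=1$, $\widehat K\in\D(\R)$, and $K(s)=O(|s|^{-4})$, for example $K(s)=c\,(\sin s/s)^4$. Set $K_\lambda(s)=\lambda K(\lambda s)$. Each $\widehat{K_\lambda}$ is compactly supported, so the hypothesis gives $\tau*K_\lambda(t)\to0$ for every $\lambda>0$. It then suffices to control
\[
\tau(t)-\tau*K_\lambda(t)=\int\bigl(\tau(t)-\tau(t-s)\bigr)K_\lambda(s)\,\mathrm{d}s .
\]
\begin{itemize}
\item \textbf{Step 1 (boundedness).} Take $\lambda=1$ and $|t|$ large. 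For $|s|\le |t|-T$ the chained bound gives a contribution of at most $\int(1+|s|/\delta)K(s)\,\mathrm{d}s<\infty$. For $|s|>|t|-T$, the linear growth bound $O(|t|+|s|)$, plus the $L^1_{\loc}$ part near the origin, is integrated against $K(s)=O(|s|^{-4})$ over $|s|\gtrsim|t|$, which gives $O(1)$. Hence $\tau$ is bounded at infinity, say $\|\tau(t)\|_Z\le M$ for $|t|\ge T$.
\item \textbf{Step 2 (decay).} Given $\varepsilon>0$, slow oscillation gives $\delta$ with $\sup_{|s|\le\delta}\|\tau(t)-\tau(t-s)\|_Z\le\varepsilon$ for all large $|t|$; the two-sided version follows from the one-sided one by shifting $t$. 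The boundedness from Step 1, with the $L^1_{\loc}$ part handled as before, yields
\[
\limsup_{|t|\to\infty}\|\tau(t)\|_Z\le \varepsilon+2M\int_{|s|>\lambda\delta}K(s)\,\mathrm{d}s .
\]
Letting $\lambda\to\infty$ and then $\varepsilon\to0$ gives $\tau=o(1)$.
\end{itemize}

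I expect Step 1 to be the main obstacle. Slow oscillation alone only gives linear growth, and the tails of a kernel with compactly supported Fourier transform must be shown to absorb that growth. This is why I choose a kernel with fast polynomial decay, such as a power of the Fejér kernel.
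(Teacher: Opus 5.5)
Your argument is correct and follows essentially the paper's route. Slow oscillation gives linear growth, \eqref{eq:LocalPseudoTestFunc} gives $\tau*\psi\to0$ for band-limited $\psi$, a nonnegative band-limited kernel with a finite first moment gives boundedness, and slow oscillation then gives decay. The only difference is at the end. The paper extends $\phi*\tau=o(1)$ from $\mathcal{F}(\D(\R))$ to all of $\S(\R)$ by density, using boundedness, and then uses a kernel supported in $[0,\delta]$. You instead keep the dilates $K_\lambda$ as an approximate identity and absorb their tails with the bound $M$. Your explicit handling of the $L^1_{\loc}$ part near the origin replaces the paper's modification of $\tau$ on a bounded interval.

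There is one slip to fix. Your example $K(s)=c(\sin s/s)^4$ does not have $\widehat K\in\D(\R)$. Its Fourier transform is a multiple of the fourfold convolution of $\chi_{[-1,1]}$, a piecewise cubic that is only $C^2$. Since \eqref{eq:LocalPseudoTestFunc} is only available for smooth test functions, it does not directly give $\tau*K_\lambda\to0$ for this kernel.

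Take instead $K=c|\widehat\psi|^2$ with $0\neq\psi\in\D(\R)$. This kernel is nonnegative and rapidly decreasing, so $K(s)=O(|s|^{-4})$ holds a fortiori. Moreover $K=c\,\mathcal{F}\{\psi*\tilde\psi\}$ with $\tilde\psi(x)=\overline{\psi(-x)}$, so $\widehat K$ and every $\widehat{K_\lambda}(\xi)=\widehat K(\xi/\lambda)$ lie in $\D(\R)$. With this choice the rest of your argument goes through unchanged.
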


\begin{proof}
We first point out that the slow oscillation yields $\tau(t)=O(t)$ (see a finer inequality below), so that we get $\tau\in \mathcal{S}'(\R; Z)$.

Suppose $\tau(t) = o(1)$ as $|t| \to \infty$.
Then $\phi * \tau \in C_0(\R; Z)$ for any $\phi \in \S(\R)$.  
Taking the Fourier transform shows that $\widehat{\tau} \in \PFloc(\R; Z)$, that is, $\singsupp_{\PF} \widehat{\tau} =\varnothing$.  

Let us now assume that $\singsupp_{\PF} \widehat{\tau} =\varnothing$.
We claim that, for any $\phi \in \S(\R)$, 
	\begin{equation}
		\label{eq:Conv-o(1)} 
		(\phi * \tau )(t) = o(1) ,\qquad \text{as } |t| \to \infty  . 
	\end{equation} 
Before proving \eqref{eq:Conv-o(1)}, let us first demonstrate how it yields $\tau(t)=o(1)$. Take any $\varepsilon > 0$. 
There exist $t_0>0$ and $\delta>0$ for which $\|\tau(t + s) - \tau(t)\|_{Z} \leq \varepsilon$ whenever $|s| \leq \delta$ and $|t| \geq t_0$.
Fix now a non-negative $\phi \in \D(\R)$ such that $\supp \phi \subseteq [0, \delta]$ and $\int_{-\infty}^{\infty} \phi(s)\,\mathrm{d}s = 1$.
Then, using \eqref{eq:Conv-o(1)} and writing
 \begin{equation}\label{eq:formulaTau} \tau(t) = \int_{-\infty}^\infty \phi(s)\tau(t)  \, \mathrm{d}s = (\phi \ast \tau)(t) + \int_{-\infty}^\infty \phi(s)(\tau(t)-\tau(t - s)) \, \mathrm{d}s, 
 \end{equation}
we obtain
	\[ \limsup_{|t| \to \infty} \|\tau(t)\|_Z \leq \limsup_{|t| \to \infty} \Big\| \int_{0}^{\delta} \phi(s)(\tau(t - s) - \tau(t)) \, \mathrm{d}s \Big\|_Z \leq \varepsilon . \]
As $\varepsilon > 0$ was arbitrary, this would complete the proof.
	
Let us now return to \eqref{eq:Conv-o(1)}.
First, for any $\phi = \widehat{\varphi}$ with $\varphi \in \D(\R)$, we have, by \eqref{eq:LocalPseudoTestFunc}, 
	\[ (\phi * \tau) (t) = \ev{\widehat{\tau}(\xi)}{e^{it\xi} \varphi(-\xi)} = o(1) , \qquad \text{as } |t| \to \infty . \] 
So, \eqref{eq:Conv-o(1)} holds for all $\phi \in \mathcal{F}(\mathcal{D}(\R))$. Notice that $\tau(t) = O(1)$ as $|t| \to \infty$. Indeed, 
by iterating the resulting inequality from \eqref{eqso}  and potentially modifying $\tau$ on a bounded interval, we may assume that there is some $M > 0$ such that $\|\tau(t+s)-\tau(t)\|_Z \leq M(|s| + 1)$ for all $t,s \in \R$.
Let $\phi \in \mathcal{F}(\D(\R))$ be such that $\phi\geq0$ and $\int_{-\infty}^{\infty} \phi(s)\,\mathrm{d}s = 1$. Filling this in \eqref{eq:formulaTau}, we have
\[\|\tau(t)\|_{Z}\leq o(1)+M\int_{-\infty}^{\infty}(|s|+1)|\phi(s)|\mathrm{d}s=O(1). \]
Using now the fact that $\mathcal{F}(\D(\R))$ is dense in $\S(\R)$, we can then conclude that \eqref{eq:Conv-o(1)} holds for each $\varphi\in \mathcal{S}(\R)$.
\end{proof}

We now consider a sufficient condition on $\singsupp_{\PF} \widehat{\tau}$ that guarantees its emptiness.
In fact, this has already been done in \cite{D-V-ComplexThmLaplaceLocalPseudo} for the scalar-valued case in more generality. For the purposes of this article, we will only need the following vector-valued version of \cite[Corollary 4.3]{D-V-ComplexThmLaplaceLocalPseudo}, whose proof is presented in Appendix \ref{appendix:ProofTheorem} for the sake of completeness.

\begin{theorem}
\label{t:SuffCondVanishSingSupp}
   Let $\tau\in L^{\infty}(\R; Z) $ and let $I\subseteq \R$ be open.
   Suppose there is a null subset $E\subset I$ for which $I\cap\singsupp_{\PF} \widehat{\tau} \subseteq E$ and such that
        \begin{equation} \label{BoundFourierTransform2}
        \sup_{t \in\R}\Big\Vert \int^t_0 e^{-i \xi_0 s} \tau(s) \, \mathrm{d}s \Big\Vert_Z  < \infty , \qquad \forall \xi_0 \in E .
    \end{equation}
Then, $I\cap \singsupp_{\PF} \widehat{\tau} = \varnothing$.
\end{theorem}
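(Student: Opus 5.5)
The plan follows the scalar argument of \cite{D-V-ComplexThmLaplaceLocalPseudo}: a closed-set/Baire argument combined with a local "removability" statement at isolated-type points. Set $F = I\cap \singsupp_{\PF}\widehat{\tau}$, which is relatively closed in $I$ and contained in the null set $E$. We argue by contradiction and assume $F\neq\varnothing$.

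\textbf{Step 1: a local perfect-set structure.} Any nonempty closed subset of a null set has empty interior. Arguing as in the Cantor--Bendixson/Baire reasoning of \cite{D-V-ComplexThmLaplaceLocalPseudo}, we aim to show that $F$ has no isolated points, and then get a contradiction through uniform bounds.

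The local statement needed is the following. Let $\xi_0\in E$ and let $J$ be an open interval around $\xi_0$ with $J\cap F\subseteq\{\xi_0\}$. Then $\widehat{\tau}$ is a local pseudofunction at $\xi_0$.

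To prove it, put $\sigma(t)=\int_0^t e^{-i\xi_0 s}\tau(s)\,\mathrm{d}s$, which is bounded by \eqref{BoundFourierTransform2}. Then $e^{-i\xi_0 t}\tau = \sigma'$, so on the Fourier side
\[
\widehat{\tau}(\xi+\xi_0) = i\xi\,\widehat{\sigma}(\xi).
\]
Since $\sigma\in L^\infty$ and $\sigma$ is Lipschitz (as $\tau\in L^\infty$), it is slowly oscillating. Near $0$, with $0$ removed, $\widehat{\sigma}=\widehat{\tau}(\cdot+\xi_0)/(i\xi)$ is a local pseudofunction, because division by the smooth nonvanishing function $\xi$ preserves $\PFloc$. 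Hence $\singsupp_{\PF}\widehat{\sigma}\cap (J-\xi_0)\subseteq\{0\}$.

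A distribution supported at a point that differs from a pseudofunction is a finite combination of $\delta^{(k)}$. Boundedness of $\sigma$ forces $\widehat\sigma$ to be of order at most the Fourier image of $L^\infty$, and $\delta^{(k)}$ with $k\ge1$ cannot occur. Consequently, locally $\widehat{\sigma}=c\,\delta+\text{pseudofunction}$ with $c\in Z$. Multiplying by $i\xi$ kills $c\,\delta$ and preserves pseudofunctions, so $\widehat{\tau}$ is a local pseudofunction at $\xi_0$. The delicate point here is justifying that the singular part is exactly a finite sum of derivatives of $\delta$ of low order. I would handle this by convolving $\sigma$ with $\widehat{\varphi}$ for a cutoff $\varphi$ and using the bounded Abel/Cesàro means of $\sigma$, following \cite{D-V-ComplexThmLaplaceLocalPseudo}.

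\textbf{Step 2: the perfect kernel, which is the main obstacle.} Step 1 shows $F$ has no isolated points, so $F$ is perfect but possibly uncountable. Here the null-set hypothesis must enter through a uniform version of the above. By Baire's theorem, applied to $F=\bigcup_N\{\xi_0\in F: \sup_t\|\int_0^t e^{-i\xi_0 s}\tau\|\le N\}$, where each piece is closed by continuity in $\xi_0$, some relatively open portion $F\cap J\neq\varnothing$ has a uniform bound $N$.

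On $J$ we then cover the null compact set $F\cap \overline{J'}$ by finitely many intervals of total length $<\varepsilon$, and use a smooth partition of unity $\varphi=\sum\varphi_j$ adapted to them. For each $\varphi_j$ centered at a point $\xi_j\in F$, write $\langle\widehat\tau, e^{it\xi}\varphi_j\rangle$ via the primitive $\sigma_{\xi_j}$, integrating by parts once. This gives an estimate of the form
\[
\limsup_{|t|\to\infty}\|\ev{\widehat\tau(\xi)}{e^{it\xi}\varphi_j(\xi)}\|_Z \lesssim N\,\|(\xi-\xi_j)^{-1}\varphi_j\|_{A},
\]
where $A$ denotes the Wiener algebra norm. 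We choose bumps so that the sum of these norms is $O(\varepsilon\cdot\text{const})$; the scale invariance of the Wiener norm of $(\xi-\xi_j)\psi((\xi-\xi_j)/h)$-type expressions is what makes this possible. The remaining part of $\varphi$ is supported off $F$, so its contribution is $o(1)$ by \eqref{eq:LocalPseudoTestFunc}. Letting $\varepsilon\to0$ gives \eqref{eq:LocalPseudoTestFunc} on $J'$, contradicting $F\cap J'\neq\varnothing$.

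Making this covering estimate precise, in particular controlling the Wiener norms of the bump quotients uniformly, is where I expect the real work to lie.
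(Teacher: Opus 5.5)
Your Step~2 is essentially the paper's proof. Baire's theorem, applied to the closed sets $\{\xi:\sup_{t}\|\int_0^t e^{-i\xi s}\tau(s)\,\mathrm{d}s\|_Z\le N\}$, gives a nonempty compact null portion $K$ of $\singsupp_{\PF}\widehat{\tau}$ with a uniform bound $N$. This portion is covered by short intervals centred at its own points, each localized piece is estimated by integrating by parts against the primitive at its centre, the remainder lies off the singular support and is $o(1)$, and then $\varepsilon\to0$ (Lemma~\ref{AuxiliaryLemma} and the argument after it).

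Drop Step~1. Step~2 never uses perfectness, its one-interval case already removes an isolated point, and the structural claim Step~1 relies on is false. Take $Z=\C$, $\tau(s)=1/(1+s^2)$ and $\xi_0=0$. Then $\sigma=\arctan$, and $\widehat{\sigma}$ equals $-i\pi\,\mathrm{p.v.}(1/\xi)$ (the transform of $\frac{\pi}{2}\,\mathrm{sign}\,t$) plus a local pseudofunction. This is not $c\,\delta$ plus a pseudofunction near $0$, because $\langle \mathrm{p.v.}(1/\xi), e^{it\xi}\phi(\xi)\rangle\to\pm i\pi\phi(0)$ as $t\to\pm\infty$, whereas $c\,\delta$ contributes the same constant $c\,\phi(0)$ at both ends. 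Only $i\xi\,\widehat{\sigma}$ is a local pseudofunction at $0$, and the bump estimate proves exactly that directly.

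The displayed estimate in Step~2 is inverted. Since $\widehat{\tau}(\xi)=i(\xi-\xi_j)\,\widehat{\sigma_{\xi_j}}(\xi-\xi_j)$, integrating by parts \emph{multiplies} the test function by $\xi-\xi_j$. For every $t$ this gives $\|\langle\widehat{\tau}(\xi),e^{it\xi}\varphi_j(\xi)\rangle\|_Z\le N\|\mathcal{F}\{(\xi-\xi_j)\varphi_j(\xi)\}\|_{L^1}$. By contrast, $(\xi-\xi_j)^{-1}\varphi_j$ is not even locally integrable when $\varphi_j(\xi_j)\neq0$. With $\varphi_j(\xi)=\chi((\xi-\xi_j)/\ell_j)\varphi(\xi)$, dilation invariance of $\|\mathcal{F}\{\cdot\}\|_{L^1}$ turns the bound into $C_\chi N\ell_j\|\widehat{\varphi}\|_{L^1}$. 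This is the paper's factor $\ell_j\|\widehat{\chi}'\|_{L^1}$, so no real work remains in that estimate.

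What does need care is that the bumps have transition width comparable to $\ell_j$ and sum to exactly $1$ near $K$. A partition of unity subordinate to an arbitrary cover need not have this. The paper enforces it by requiring the doubled intervals $[\xi_j-\ell_j,\xi_j+\ell_j]$ to be disjoint. However, with centres forced into $K$, such a cover need not exist; it fails, e.g., for the middle-thirds Cantor set. The safest way to make your ``partition of unity adapted to them'' precise is to work at a single scale:
take $\chi\in\D((-1,1))$ with $\sum_{k\in\mathbb{Z}}\chi(\cdot-k)\equiv1$;
keep only the $k$ with $\mathrm{dist}(k2^{-n},K)\le2^{1-n}$;
pick $\xi_k\in K$ within $2^{1-n}$ of $k2^{-n}$ (the off-centring costs only a constant).
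These bumps sum to $1$ on the $2^{-n}$-neighbourhood of $K$. The total bound is then $\le C\,N\|\widehat{\varphi}\|_{L^1}\,2^{-n}\#\{k\}\le C\,N\|\widehat{\varphi}\|_{L^1}\,|\{\xi:\mathrm{dist}(\xi,K)<3\cdot2^{-n}\}|$, which tends to $0$ because $K$ is compact and null.
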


\subsection{Vector-valued Tauberian theorem for local pseudofunction boundary behavior}
\label{sub: bv and taub}
Let $F : \C_+ \to Z$ be analytic, where $\C_+ = \{ \lambda \in \C \mid \Real \lambda > 0 \}$.
For any open $I \subseteq \R$, we say that $F$ has \emph{distributional boundary values} on the boundary set $i I$ if there exists some $f \in \D^\prime(I; Z)$ such that 
	\begin{equation}
		\label{eq:BoundaryBehavior} 
		\lim_{\alpha \to 0^+} \int_{-\infty}^\infty \varphi(\xi)F(\alpha + i\xi)  \, \mathrm{d}\xi=\ev{f(\xi)}{\varphi(\xi)} , \qquad \varphi \in \D(I) . 
	\end{equation}
We simply write $F(i\xi) = f(\xi)$, for the distributional boundary value (or boundary distribution) of $F$, which should always be interpreted as an element of $\D'(I; Z)$.
Having distributional boundary values is equivalent to the following local boundary bounds \cite[Theorem 3.1.11, p.~63]{H-PDOI}: For a fixed $\alpha_0 > 0$ and for each bounded open $I' \subseteq I$ there is $N = N_{I'}$ such that
	\begin{equation}
		\label{BoundaryValuesBound}
		\|  F(\alpha + i\xi)\|_{Z} = O\left(\frac{1}{\alpha^N}\right), \qquad  \mbox{for } \alpha + i\xi \in (0,\alpha_0] + i I' .
	\end{equation}

\begin{definition}
\label{def:LocalPseudofuncBoundBeh}
Let $I \subseteq \R$ be open.
We say that a holomorphic function $F : \C_+ \to Z$ has \emph{local pseudofunction boundary behavior} on $i I$ if \eqref{eq:BoundaryBehavior} holds for some $f \in \PFloc(I; Z)$.
\end{definition}

Suppose $\tau : \R_{+}\to Z$ is a polynomially bounded (Bochner) measurable function, where here and below we use the notation $\R_{+}=[0,\infty)$.
Then its Laplace transform
	\[ \L\{\tau; \lambda\} = \int_{0}^\infty e^{-\lambda t}\tau(t)  \, \mathrm{d}t , \qquad \lambda \in \C_+ , \]
defines a holomorphic function $\C_+ \to Z$.
Moreover, one readily shows the global bound, for some $N> 0$,
\[ 
\|\L\{\tau; \lambda\} \|_{Z} = O\left(\frac{1}{\alpha^{N}}\right) , \qquad \lambda = \alpha + i \xi \in \C_+ . \]
Consequently, $\L\{\tau; \lambda\}$ has distributional boundary value $\L\{\tau; i\xi\} \in \D^\prime(\R; Z)$ on the whole imaginary axis.
In fact, its boundary distribution coincides with the Fourier transform of $\tau$
\cite{B-DistrComplVarFourTrans,V-MethGenlFunct} 
	\begin{equation}
		\label{eq:FourierTransformBoundaryLaplace} 
		\widehat{\tau}(\xi) = \L\{\tau; i\xi\} = \lim_{\alpha \to 0^+} \L\{\tau; \alpha + i\xi\} , 
	\end{equation}
and the limit actually holds in $\S'(\R; Z)$.

We now find the following special version of the Tauberian theorem \cite[Theorem 5.1]{D-V-ComplexThmLaplaceLocalPseudo} in the vector-valued case.

\begin{theorem}\label{MainResultGregoryJasson}
Let $\tau: \R_{+}\to Z$ be bounded and uniformly continuous. The following statements are equivalent:
	\begin{itemize}
		\item[$(a)$] $\tau(t) = o(1)$ as $t \to \infty$;
		\item[$(b)$]  $\mathcal{L}\{\tau; \lambda\}$ has local pseudofunction boundary behavior on $i\R$;
		\item[$(c)$] there is a closed null set $E \subset \R$ such that $\mathcal{L}\{\tau; \lambda\}$ has local pseudofunction boundary behavior on $i(\R\setminus E)$ and
    			\begin{equation*} 
        				\sup_{t\in\R_{+}} \Big\Vert \int^t_0 e^{-i \xi_0 s}\tau(s) \, \mathrm{d}s \Big\Vert_Z < \infty , \qquad \forall \xi_0 \in E .
    			\end{equation*}
	\end{itemize}
\end{theorem}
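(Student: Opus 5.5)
The natural route is to extend $\tau$ by zero to all of $\R$, setting $\tau(t)=0$ for $t<0$. The extension $\tau\in L^\infty(\R;Z)$ is slowly oscillating at $\pm\infty$: it vanishes identically near $-\infty$, and on $\R_+$ it is uniformly continuous. The jump at $0$ is irrelevant because \eqref{eqso} only concerns $|t|\to\infty$. By \eqref{eq:FourierTransformBoundaryLaplace}, the boundary distribution of $\L\{\tau;\lambda\}$ on $i\R$ is exactly $\widehat{\tau}$. Hence, for any open $I\subseteq\R$, saying that $\L\{\tau;\lambda\}$ has local pseudofunction boundary behavior on $iI$ is the same as saying $\widehat\tau|_I\in\PFloc(I;Z)$, i.e.\ $I\cap\singsupp_{\PF}\widehat\tau=\varnothing$. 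One point needs care: Definition \ref{def:LocalPseudofuncBoundBeh} asks for \eqref{eq:BoundaryBehavior} with some $f\in\PFloc(I;Z)$. Since the boundary value in $\D'(I;Z)$ is unique, $f$ must equal $\widehat\tau|_I$.

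For $(a)\Leftrightarrow(b)$ I would apply Lemma \ref{l:SingSuppEmpty<=>o(1)} directly to the extended $\tau$. It gives $\singsupp_{\PF}\widehat\tau=\varnothing$ if and only if $\tau(t)=o(1)$ as $|t|\to\infty$. Because $\tau$ vanishes on $(-\infty,0)$, this is just $\tau(t)=o(1)$ as $t\to\infty$. By the first paragraph, the emptiness of $\singsupp_{\PF}\widehat\tau$ is precisely $(b)$.

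The implication $(b)\Rightarrow(c)$ is trivial: take $E=\varnothing$, so the integral condition is vacuous. For $(c)\Rightarrow(b)$, I would invoke Theorem \ref{t:SuffCondVanishSingSupp} with $I=\R$ and the given null set $E$. The hypothesis that $\L\{\tau;\lambda\}$ has local pseudofunction boundary behavior on $i(\R\setminus E)$ gives $\singsupp_{\PF}\widehat\tau\subseteq E$, using that $\R\setminus E$ is open since $E$ is closed. For \eqref{BoundFourierTransform2}, note that $\int_0^t e^{-i\xi_0 s}\tau(s)\,\mathrm{d}s$ vanishes for $t<0$ with the zero extension. So the supremum over $t\in\R$ reduces to the given supremum over $t\in\R_+$. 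The theorem then yields $\singsupp_{\PF}\widehat\tau=\varnothing$, which is $(b)$.

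I expect the main obstacle to be bookkeeping rather than substance. The points to check are the identification of the boundary distribution with $\widehat\tau$, which relies on the $\S'$ limit in \eqref{eq:FourierTransformBoundaryLaplace} restricting to $\D(I)$ test functions, and the verification that the zero extension satisfies slow oscillation despite the discontinuity at $0$. All the analytic depth is carried by Lemma \ref{l:SingSuppEmpty<=>o(1)} and Theorem \ref{t:SuffCondVanishSingSupp}.
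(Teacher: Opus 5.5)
Your proposal is correct and follows the paper's proof. You identify the boundary distribution of $\L\{\tau;\lambda\}$ with $\widehat\tau$ via \eqref{eq:FourierTransformBoundaryLaplace}, obtain $(a)\Leftrightarrow(b)$ from Lemma \ref{l:SingSuppEmpty<=>o(1)}, and obtain $(c)\Rightarrow(b)$ from Theorem \ref{t:SuffCondVanishSingSupp}; your checks that the zero extension is slowly oscillating and reduces the supremum over $t\in\R$ to $t\in\R_+$ just spell out details the paper leaves implicit.
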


\begin{proof}By \eqref{eq:FourierTransformBoundaryLaplace} we have $\singsupp_{\PF} \L\{\tau; i \, \cdot\} = \singsupp_{\PF} \widehat{\tau}.$ The property $(b)$ is thus the same as $\singsupp_{\PF} \widehat{\tau}=\varnothing,$ while the first part of $(c)$ means that $\singsupp_{\PF} \widehat{\tau}\subseteq E.$
The equivalence $(a) \Longleftrightarrow (b)$ is now a special case of Lemma \ref{l:SingSuppEmpty<=>o(1)}.
The implication $(b) \Longrightarrow (c)$ is trivial.
Finally, $(c) \Longrightarrow (b)$ follows from Theorem \ref{t:SuffCondVanishSingSupp}.
\end{proof}

\section{The local pseudofunction spectrum and bounded $C_0$-semigroups}\label{section lpfs}
In this section, we introduce and study the concept of \emph{local pseudofunction spectrum} $\sigma_{\PF}(A)$ for the infinitesimal generator of a bounded strongly continuous semigroup. 

Recall a $C_0$-semigroup $\mathcal{T} = (T(t))_{t \geq 0}$ is an operator-valued map $T:\R_{+}\to L(X)$, where $X$ is a Banach space, satisfying
	\[ T(0) = \id_X , \qquad T(t + s) = T(t) T(s) , \qquad \forall t, s \geq 0 , \]
and such that the function $t \mapsto T(t)$ is strongly continuous.
 Each $C_0$-semigroup $\mathcal{T}$ possesses an infinitesimal generator $A$, which is a closed operator $A : D(A) \subseteq X \to X$ on the dense subspace $D(A)$ of $X$ for which
	\[ Ax = \lim_{t \to 0^+} \frac{T(t) x - x}{t} , \qquad x \in D(A) . \]
Throughout the remainder of this article, unless otherwise specified, when considering a $C_0$-semigroup $\mathcal{T} = (T(t))_{t\geq0}$, $X$ will denote the Banach space it acts on, and $A$ its generator with domain $D(A)$.

The resolvent set $\rho(A)$ of a $C_0$-semigroup $\mathcal{T}$ is the set of all $\lambda \in \C$ such that $\lambda - A : D(A) \to X$ is an isomorphism. Then $\rho(A)$ is an open set, and its resolvent at the point $\lambda \in \rho(A)$ is the operator
	\[ R(\lambda, A) = (\lambda - A)^{-1}  . \]
Since $A$ is closed, $R(\lambda,A)\in L(X)$. It defines a holomorphic function $\rho(A) \to L(X)$. Its spectrum is $\sigma(A) = \C \setminus \rho(A)$. We also consider the residual spectrum $\sigma_R(A) = \{ \lambda \in \sigma(A) \mid \rg(\lambda - A) \text{ is not dense in } X \}$.

A $C_0$-semigroup $\mathcal{T}$ is called \emph{bounded} if $\sup_{t \geq 0} \|T(t)\|_{L(X)} < \infty$.
In this case, the orbit function $t \mapsto T(t) x$ is bounded and uniformly continuous for any $x \in X$.
By the integral representation formula for the resolvent \cite{E-N-ShortCourseSemigroups}, if a $C_0$-semigroup $\mathcal{T}$ is bounded, then $\C_+ \subseteq \rho(A)$ and we get the global bound
	\begin{equation}\label{eq:resolvent bound} 
		\|R(\lambda, A)\|_{L(X)} =O\left(\frac{1}{\alpha}\right) , \qquad \lambda = \alpha + i \xi \in \C_+ . \end{equation}
Therefore, the resolvent has distributional boundary values on the whole imaginary axis, and its boundary distribution is an $L(X)$-valued tempered distribution. According to our convention, we denote this boundary distribution as $R(i\xi,A)\in \mathcal{S}'(\R; L(X))$. 

For any $x \in X$, the \emph{local unitary spectrum $\sigma_u(A, x)$} \cite{B-V-R-LocalSpecInStab} is defined as the set of those points $i \xi_0 \in i \R$ where $R(\lambda, A) x$ does not allow a holomorphic extension to any neighborhood of $i \xi_0$. 
Clearly $\sigma_u(A, x) \subseteq \sigma(A) \cap i \R$, while the uniform boundedness principle yields
	\[ \sigma(A) \cap i \R = \bigcup_{x \in X} \sigma_u(A, x) . \]
Other types of boundary behavior, less restrictive than analytic continuation,  may occur on the imaginary axis for the resolvent. This motivates the following definition. Note that, for a bounded semigroup, $R(i \xi,A)x=\mathcal{F}\{T(t)x; \xi\}$ is an $X$-valued tempered distribution for each $x\in X$.

\begin{definition}
\label{def:LocalPseudofuncSpect}
Let $\mathcal{T}$ be a bounded $C_0$-semigroup.

$(i)$ For $x \in X$, we denote the \emph{local pseudofunction spectrum of $A$ at $x$} as
	\[ \sigma_{\PF}(A, x) = i \singsupp_{\PF} R(i \, \cdot, A) x . \]
	
$(ii)$ For any $M \subseteq X$, we write $\sigma_{\PF}(A, M)$ for the complement of the largest open subset $i I \subseteq i \R$ such that $R(i \xi, A) x \in \PFloc(I; X)$ for every $x \in M$.
For the special case $M=X$, we simply employ the notation
\[\sigma_{\PF}(A)=\sigma_{\PF}(A,X).\]
\end{definition}

\begin{remark}
\label{r:InclusionsSpectra}
\

$(i)$ In the terminology from Definition \ref{def:LocalPseudofuncBoundBeh}, Definition \ref{def:LocalPseudofuncSpect}(i) can be rephrased as follows: $\sigma_{\PF}(A, x)$ is the smallest closed subset of $i\R$ such that $R(\lambda, A)$ has local pseudofunction boundary behavior on the boundary open subset $i\R \setminus \sigma_{\PF}(A, x)$.

$(ii)$ Since $R(\lambda, A)$ has analytic extension to a neighborhood of $\rho(A) \cap i \R$, it has, in particular, local pseudofunction boundary behavior there. Therefore, $\sigma_{\PF}(A) \subseteq \sigma(A) \cap i \R$.
In fact, for each $x \in X$, we have $\sigma_{\PF}(A, x) \subseteq \sigma_{u}(A, x)$, but the inclusion can be strict, see Examples \ref{ex:PFEmptyButNotContinuous} and \ref{ex:PseudoZeroUnitaryWholeLine} below.

$(iii)$ For $M' \subseteq M \subseteq X$ we of course have $\sigma_{\PF}(A,M') \subseteq \sigma_{\PF}(A,M)$.
Therefore, for any $x \in M \subseteq X$,
	\begin{equation}
		\label{eq:InclusionChainSpectra} 
		\sigma_{\PF}(A,x) \subseteq \sigma_{\PF}(A,M) \subseteq \sigma_{\PF}(A) \subseteq \sigma(A) \cap i \R . 
	\end{equation}
	
$(iv)$ By \eqref{eq:InclusionChainSpectra}, one has $\bigcup_{x \in M} \sigma_{\PF}(A,x) \subseteq \sigma_{\PF}(A,M)$ for any arbitrary subset $M \subseteq X$. 
However, equality does not need to hold, as can be seen from Example \ref{ex:UnionNotPseudoSpectrum}, unless $M $ is a Baire subspace, see Proposition \ref{l:LocalPseudofunctionSpectrumIsUnion}.
On the other hand, it is clear that
	\[ \sigma_{\PF}(A, M) = \overline{\bigcup_{x\in M}\sigma_{\PF}(A,x)} . \] 

$(v)$ The operator $A$ must not necessarily be the infinitesimal generator of a bounded $C_{0}$-semigroup in order to be able to define the local pseudofunction spectra as in Definition \ref{def:LocalPseudofuncSpect}.  In general, our definitions of $\sigma_{\PF}(A, x)$ and $\sigma_{\PF}(A, M)$ above still make sense for any closed operator $A$ on a Banach space $X$ (with non-necessarily dense domain) as long as $\C_{+}\subseteq \rho(A)$ and its resolvent satisfies local bounds
\[
\|R(\lambda,A)\|_{L(X)}=O\left( \frac{1}{\alpha^{N}}\right), \qquad \lambda=\alpha + i\xi \in (0,\alpha_0] + i I,
\]
for each bounded interval $I\subset \R$ and some $N=N_{I}$, ensuring that $R(i\xi,A)\in\mathcal{D}'(\R;L(X))$ exits (see \eqref{BoundaryValuesBound}). For instance, this applies to infinitesimal generators of polynomially bounded semigroups $\mathcal{T}$.
However, for our analysis in the remainder of this article, the assumption of boundedness for the semigroup and that $A$ arises as its infinitesimal generator will often be needed, see Example \ref{ex:BoundCondNess1}. For this reason, our standing assumption will be to work with bounded $C_0$-semigroups.

\end{remark}

Let us show three basic but important properties of local pseudofunction spectra. They play a crucial role in our stability study of bounded $C_0$-semigroups carried out in the next sections.

\begin{proposition}\label{PFClosure}
Let $\mathcal{T}$ be a bounded $C_0$-semigroup and let $M \subseteq X$. Then 
	\begin{equation}
		\label{eq:PFClosure}
		\sigma_{\PF}(A, \overline{M}) = \sigma_{\PF}(A, M) .
	\end{equation}  
\end{proposition}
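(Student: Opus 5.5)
One inclusion comes for free: since $M \subseteq \overline{M}$, Remark \ref{r:InclusionsSpectra}(iii) gives $\sigma_{\PF}(A,M) \subseteq \sigma_{\PF}(A,\overline{M})$. For the reverse inclusion, let $iI = i\R \setminus \sigma_{\PF}(A,M)$ be the largest open set on which $R(i\xi,A)x \in \PFloc(I;X)$ holds for all $x \in M$. We will show that $R(i\xi,A)y \in \PFloc(I;X)$ for every $y \in \overline{M}$. Then $iI \subseteq i\R\setminus\sigma_{\PF}(A,\overline{M})$, which is the desired inclusion.

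We use the test-function characterization \eqref{eq:LocalPseudoTestFunc}. Fix $\varphi \in \D(I)$. By \eqref{eq:FourierTransformBoundaryLaplace}, the boundary value $R(i\xi,A)x$ is the Fourier transform of the orbit $t\mapsto T(t)x$, extended by $0$ for $t<0$. Hence
\[
\ev{R(i\xi,A)x}{e^{it\xi}\varphi(\xi)} = \int_0^\infty \widehat{\varphi}(s-t)\,T(s)x \,\mathrm{d}s ,
\]
up to a sign convention in the argument of $\widehat\varphi$. Put $K = \sup_{s\ge0}\|T(s)\|_{L(X)}$. Since $\widehat\varphi \in \S(\R)\subset L^1(\R)$, this gives the uniform estimate
\[
\bigl\|\ev{R(i\xi,A)x}{e^{it\xi}\varphi(\xi)}\bigr\|_X \le K\,\|\widehat{\varphi}\|_{L^1(\R)}\,\|x\|_X, \qquad t\in\R,\ x\in X .
\]
The point is that this bound does not depend on $t$.

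Now take $y \in \overline{M}$ and $\varepsilon>0$, and choose $x\in M$ with $\|y-x\|_X<\varepsilon$. By linearity and the estimate above,
\[
\limsup_{|t|\to\infty}\bigl\|\ev{R(i\xi,A)y}{e^{it\xi}\varphi(\xi)}\bigr\|_X \le \limsup_{|t|\to\infty}\bigl\|\ev{R(i\xi,A)x}{e^{it\xi}\varphi(\xi)}\bigr\|_X + K\|\widehat\varphi\|_{L^1}\varepsilon .
\]
The first term on the right vanishes because $x\in M$ and $\varphi \in \D(I)$. Letting $\varepsilon\to0$ shows that $y$ satisfies \eqref{eq:LocalPseudoTestFunc} for every $\varphi\in\D(I)$. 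Therefore $R(i\xi,A)y\in\PFloc(I;X)$, which finishes the argument.

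The only step that needs care is the uniform-in-$t$ bound, specifically checking that the pairing really is the convolution $\widehat\varphi * (T(\cdot)x\,\mathbf 1_{\R_+})$ evaluated at $t$. This follows from the definition of the Fourier transform on $\S'$ applied to the bounded function $T(\cdot)x\mathbf 1_{\R_+}$. Boundedness of the semigroup is exactly what makes this uniform estimate available.
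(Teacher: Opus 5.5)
Your proof is correct and takes the same approach as the paper. Both use the test-function criterion \eqref{eq:LocalPseudoTestFunc}, write the pairing as $\int_0^\infty \widehat{\varphi}(s-t)T(s)x\,\mathrm{d}s$, and use the $t$-independent bound $\sup_{s\geq 0}\|T(s)\|_{L(X)}\,\|\widehat{\varphi}\|_{L^1}\,\|x\|_X$ to pass from $M$ to $\overline{M}$. Your $\varepsilon$/$\limsup$ formulation is a slightly streamlined version of the paper's sequence-plus-interchange-of-limits argument.
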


\begin{proof}
By Remark \ref{r:InclusionsSpectra}{$(iii)$}, it suffices to show that $\sigma_{\PF}(A, \overline{M}) \subseteq \sigma_{\PF}(A, M)$.
Fix $i I = i \R \setminus \sigma_{\PF}(A, M)$ and take any $\varphi \in \D(I)$.
Then    
    \[  \lim_{|t| \to \infty} \ev{R(i \xi, A) y}{e^{it\xi} \varphi(\xi)} = 0 , \qquad \forall y \in M  . \] 
For any $x \in \overline{M}$, we may take a sequence $(x_n)_{n \in \N}$ in $M$ converging to $x$.
By dominated convergence, we have	
    \begin{align*}
    	\lim_{n \to \infty} \langle R(i\xi, A) x_n, e^{it\xi} \varphi(\xi) \rangle 
	&= \lim_{n \to \infty} \int^\infty_0 \widehat{\varphi}(s - t)T(s) x_n  \, \mathrm{d}s \\
	&= \int_0^\infty \widehat{\varphi}(s - t)T(s)x \, \mathrm{d}s  
	= \ev{R(i\xi, A) x}{e^{it\xi} \varphi(\xi)} .
    \end{align*}
Moreover, the convergence is uniform in $t$ because
   \begin{equation}
   \label{eq:uniform switiching auxiliary}
     	\sup_{t \in \R} \Big\Vert\int^\infty_0 \widehat{\varphi}(s-t) T(s)(x - x_n)\, \mathrm{d}s \Big\Vert_X \leq \left(\sup_{s \geq 0} \|T(s)\|_{L(X)}\right) \Vert x-x_n\Vert_{X} \Vert\widehat{\varphi}\Vert_{L^1} .
    \end{equation}
Therefore,
    \begin{align*}
            \lim_{|t| \to \infty} \langle R(i\xi, A)x, e^{it\xi} \varphi(\xi) \rangle
            &=\lim_{|t| \to \infty} \lim_{n \to \infty} \langle R(i\xi, A)x_n, e^{it\xi} \varphi(\xi)\rangle \\
            &=\lim_{n \to \infty} \lim_{|t| \to \infty} \langle R(i\xi, A)x_n, e^{it\xi} \varphi(\xi)\rangle 
            = 0 .
    \end{align*}
Hence, $R(i \xi, A) x \in \PFloc(I; X)$ for all $x \in \overline{M}$ and \eqref{eq:PFClosure} holds true.
\end{proof}

\begin{proposition}
	\label{l:LocalPseudofunctionSpectrumIsUnion}
	For a bounded $C_0$-semigroup $\mathcal{T}$ and a Baire linear subspace $M \subseteq X$, 	\begin{equation}\label{eq:UnionIsPseudoSpectrum}
	\sigma_{\PF}(A,M) = \bigcup_{x\in M}\sigma_{\PF}(A,x) .
	\end{equation}
	\end{proposition}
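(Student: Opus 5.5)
The inclusion $\supseteq$ is already contained in Remark \ref{r:InclusionsSpectra}$(iv)$. For the reverse inclusion, set $U=\bigcup_{x\in M}\sigma_{\PF}(A,x)$. Fix a point $i\xi_0\in i\R\setminus U$. The goal is an open interval $J\ni\xi_0$ with $R(i\xi,A)x\in\PFloc(J;X)$ for all $x\in M$. This shows $i\xi_0\notin\sigma_{\PF}(A,M)$. The point is to make the neighborhood uniform in $x$, and Baire category will provide this.

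For each $x\in M$ we have $\xi_0\notin\singsupp_{\PF}R(i\,\cdot,A)x$. So there is $k\in\N$ such that $R(i\xi,A)x$ is a local pseudofunction on $J_k=(\xi_0-1/k,\xi_0+1/k)$. Define
\[
M_k=\{x\in M \mid R(i\xi,A)x\in\PFloc(J_k;X)\}.
\]
Then $M=\bigcup_k M_k$. Each $M_k$ is a linear subspace, by linearity of the boundary distribution. Each $M_k$ is also closed in $M$: this is exactly the argument of Proposition \ref{PFClosure}, via the uniform estimate \eqref{eq:uniform switiching auxiliary} and the characterization \eqref{eq:LocalPseudoTestFunc} with $\varphi\in\D(J_k)$. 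Since $M$ is Baire, some $M_k$ has nonempty interior in $M$.

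A linear subspace with nonempty interior in the topological vector space $M$ must be all of $M$. Indeed, if $x_0+B\subseteq M_k$ with $B$ a relative neighborhood of $0$ in $M$, then $B\subseteq M_k$ after subtracting $x_0$. Absorbency of $B$ then gives $M\subseteq M_k$. Hence $R(i\xi,A)x\in\PFloc(J_k;X)$ for every $x\in M$, so $iJ_k\cap\sigma_{\PF}(A,M)=\varnothing$.

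The main step to check carefully is the closedness of $M_k$ relative to $M$ (with the norm topology inherited from $X$). This is where boundedness of $\mathcal{T}$ enters, through the bound \eqref{eq:uniform switiching auxiliary}, which lets us interchange the limits in $n$ and $|t|$.
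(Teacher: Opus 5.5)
Your proposal is correct and follows the paper's proof essentially verbatim: the same exhaustion of $M$ by the sets of vectors whose resolvent is a local pseudofunction on $(\xi_0-1/k,\xi_0+1/k)$, closedness via the uniform estimate \eqref{eq:uniform switiching auxiliary} from the proof of Proposition \ref{PFClosure}, and the Baire category theorem. You additionally spell out why a linear subspace with nonempty relative interior must be all of $M$, a step the paper leaves implicit.
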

	\begin{proof}
	The inclusion $\bigcup_{x\in M}\sigma_{\PF}(A,x)\subseteq \sigma_{\PF}(A,M)$ follows from \eqref{eq:InclusionChainSpectra}. 
	Let now $i\xi_0 \notin \sigma_{\PF}(A,x)$ for any $x \in M$. 
	Define, for every $j \in \N$, the interval $I_j = \left(\xi_{0} - \frac{1}{j}, \xi_{0} + \frac{1}{j}\right)$ and consider the subspaces
		\[ V_j = \{x \in X \mid R(i\,\cdot,A)x \in \PFloc(I_j; X) \} , \qquad j \in \N. \]
	By our assumption on $i \xi_0$, we obtain $\bigcup_{j = 1}^\infty (V_j\cap M) = M$.
	Also, each subspace $V_j$ is closed.
	Indeed, let $V_j \ni x_n \to x$. 
	Then
		\[ \lim_{\vert t\vert\to \infty} \ev{R(i\xi,A)x_n}{e^{it\xi}\varphi(\xi)} = 0 , \qquad \forall \varphi \in \D(I_j) . \] 
Hence, in view of \eqref{eq:uniform switiching auxiliary}, we find, for any $\varphi \in \D(I_j)$
		\begin{align*}
			\lim_{\vert t\vert\to \infty}\ev{R(i\xi,A)x}{e^{it\xi}\varphi(\xi)}
			&=\lim_{\vert  t\vert\to\infty}\lim_{n\to\infty}\ev{R(i\xi,A)x_n}{e^{it\xi}\varphi(\xi)} \\
			&=\lim_{n\to\infty}\lim_{\vert  t\vert\to \infty}\ev{R(i\xi,A)x_n}{e^{it\xi}\varphi(\xi)}=0 .
		\end{align*} 
	Thus $x \in V_j$. 
	
	Since $M$ is a Baire space and $M\cap V_j$ are closed subspaces of it, we must have $M= V_j\cap M$ for some $j \in \N_{+}$.
	Hence, $i\xi_0 \notin \sigma_{\PF}(A, M)$.
	As $i \xi_0$ was chosen arbitrarily, we deduce the sought inclusion.
	\end{proof}

Note that Proposition \ref{l:LocalPseudofunctionSpectrumIsUnion} holds in particular for closed subspaces of $X$. Especially,
	\begin{equation}
		\label{eq:UnionIsFullPseudoSpectrum}
		\sigma_{\PF}(A) = \bigcup_{x \in X} \sigma_{\PF}(A, x) .
	\end{equation}

We now show that operators commuting with $R(\lambda, A)$ shrink the size of the local pseudofunction spectrum when applied to a vector. In the next proposition, we endow $D(A)$ with a Banach space topology using the graph norm $\|x\|_{D(A)} = \|x\|_X + \|Ax\|_X$.

\begin{proposition}
	\label{l:CommutingOperator}
	Let $\mathcal{T}$ be a bounded $C_0$-semigroup and let $Y$ be a Banach space such that $D(A) \subseteq Y \subseteq X$ continuously.
	If $L \in L(Y, X)$ commutes with $R(\lambda, A)$ for $\lambda \in \C_+$, then 
		\[ \sigma_{\PF}(A, Ly) \subseteq \sigma_{\PF}(A, y) , \qquad y \in Y . \]
\end{proposition}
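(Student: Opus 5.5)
The idea is to move the problem into $D(A)$ with the resolvent identity, apply $L$ there, where it is continuous, and then return to $R(\lambda,A)Ly$ by the same identity. The difficulty is that $L$ is only defined and continuous on $Y$, while the boundary distribution $R(i\xi,A)y$ is merely $X$-valued, so $L$ cannot be applied to it directly.

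Fix $y\in Y$ and $i\xi_0\notin\sigma_{\PF}(A,y)$. Choose an open interval $I\ni\xi_0$ with $R(i\,\cdot,A)y\in\PFloc(I;X)$, and fix $\mu\in\C_+$, say $\mu=1$. The resolvent identity gives
\[
G(\lambda):=R(\lambda,A)R(\mu,A)y=\frac{R(\lambda,A)y-R(\mu,A)y}{\mu-\lambda},\qquad \lambda\in\C_+\setminus\{\mu\}.
\]
Since $\lambda\mapsto(\mu-\lambda)^{-1}$ is smooth near $i\R$, the function $G$ has $X$-valued boundary distribution $(\mu-i\xi)^{-1}(R(i\xi,A)y-R(\mu,A)y)$ on $iI$. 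This distribution lies in $\PFloc(I;X)$, because $\PFloc$ is a module over smooth functions and constants are locally $L^1$.

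Next we regard $G$ as a $D(A)$-valued function with the graph norm. We have $AG(\lambda)=\lambda G(\lambda)-R(\mu,A)y$. Hence both components $G$ and $AG$ have $X$-valued boundary distributions on $iI$: the second is obtained by multiplying by the smooth function $i\xi$ and subtracting a constant. Both are local pseudofunctions on $I$. Since $D(A)$ embeds isometrically into $X\times X$ via $x\mapsto(x,Ax)$, the convergence in \eqref{eq:BoundaryBehavior} and the decay criterion \eqref{eq:LocalPseudoTestFunc} then hold in $D(A)$. Here one uses that $D(A)$ is closed in $X\times X$, since $A$ is closed, so the limits lie in $D(A)$. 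Thus $G$ has $\PFloc(I;D(A))$ boundary behavior on $iI$.

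The operator $L$ restricted to $D(A)$ is continuous $D(A)\to X$, because the inclusions are continuous. A continuous linear map commutes with the limits in \eqref{eq:BoundaryBehavior}, and by \eqref{eq:LocalPseudoTestFunc} it maps local pseudofunctions to local pseudofunctions. Therefore $LG(\lambda)=R(\lambda,A)R(\mu,A)Ly$ has $\PFloc(I;X)$ boundary behavior, using the commutation hypothesis twice. Finally, the resolvent identity applied to $Ly\in X$ gives
\[
R(\lambda,A)Ly=R(\mu,A)Ly+(\mu-\lambda)R(\lambda,A)R(\mu,A)Ly .
\]
Passing to boundary values, the first term is a constant and the second is a smooth multiple of a local pseudofunction. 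Hence $R(i\,\cdot,A)Ly\in\PFloc(I;X)$, so $i\xi_0\notin\sigma_{\PF}(A,Ly)$.

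The step needing care is the third paragraph: upgrading the boundary behaviour from $X$ to the graph norm of $D(A)$, including that the limits land in $D(A)$. Everything else is formal manipulation with smooth multipliers.
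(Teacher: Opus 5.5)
Your argument is correct and follows the paper's route. Like the paper, you upgrade the local pseudofunction boundary behaviour to the graph norm of $D(A)$ (which the paper only asserts as ``the convergence holds in $D(A)$''), then pass $L|_{D(A)}$ through the limits using the commutation hypothesis. The detour through $G(\lambda)=R(\lambda,A)R(\mu,A)y$ and the resolvent identity is harmless but unnecessary: $R(\lambda,A)y$ is already $D(A)$-valued with $AR(\lambda,A)y=\lambda R(\lambda,A)y-y$, so your third paragraph applies verbatim to it.
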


\begin{proof}
Fix some $y \in Y$.
Let $I \subseteq \R$ be open such that $R(i \xi, A) y \in \PFloc(I; X)$.
Given $\varphi \in \D(I)$, we have
\begin{align*}
	\lim_{|t| \to \infty} \langle R(i\xi, A)Ly, e^{i t \xi} \varphi(\xi)\rangle
	&= \lim_{|t| \to \infty} \lim_{\alpha \to 0+} \int_{-\infty}^\infty e^{i t \xi}  \varphi(\xi) [R(\alpha+i\xi, A)Ly]  \, \mathrm{d}\xi \\
	&= \lim_{|t| \to \infty}  \lim_{\alpha \to 0+} \int_{-\infty}^\infty e^{i t \xi}  \varphi(\xi)[LR(\alpha + i\xi, A)y]   \, \mathrm{d}\xi\\
	&=\lim_{|t| \to \infty} L\langle R(i\xi, A)y, e^{i t \xi}  \varphi(\xi)\rangle 
	= 0 ,
\end{align*}
where both times we could switch the order of the limit and the operator $L$ because the convergence holds in $D(A)$.
Consequently, $R(i \xi, A) Ly \in \PFloc(I; X)$.
\end{proof}

\begin{corollary}
	\label{c:SpectrumOf(A-s)}
	Let $\mathcal{T}$ be a bounded $C_0$-semigroup.
	Then $\sigma_{\PF}(A, (\lambda_0 - A) x) \subseteq \sigma_{\PF}(A, x)$ for any $\lambda_0 \in \C$ and $x \in D(A)$.
\end{corollary}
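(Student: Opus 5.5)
The plan is to apply Proposition \ref{l:CommutingOperator} with $Y = D(A)$, equipped with the graph norm $\|x\|_{D(A)} = \|x\|_X + \|Ax\|_X$. For this choice the inclusions $D(A) \subseteq Y \subseteq X$ are trivially continuous. We take $L = \lambda_0 - A$, restricted to $D(A)$. Since
\[
\|(\lambda_0 - A)x\|_X \leq |\lambda_0|\,\|x\|_X + \|Ax\|_X \leq \max\{1,|\lambda_0|\}\,\|x\|_{D(A)},
\]
we have $L \in L(Y, X)$.

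It remains to check the commutation hypothesis, namely that $L R(\lambda, A) = R(\lambda, A) L$ on $Y$ for all $\lambda \in \C_+$. Both sides must be read as operators $Y \to X$. For $y \in D(A)$, the vector $R(\lambda,A) y$ lies in $D(A) = Y$, so $L R(\lambda,A) y$ is defined. Moreover, $A R(\lambda, A) y = R(\lambda, A) A y$, because both sides equal $\lambda R(\lambda,A)y - y$. This is the standard commutation of a closed operator with its resolvent on the domain. Subtracting it from $\lambda_0 R(\lambda,A)y = R(\lambda,A)\lambda_0 y$ gives
\[
(\lambda_0 - A)R(\lambda,A)y = R(\lambda,A)(\lambda_0 - A)y.
\]

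With both hypotheses verified, Proposition \ref{l:CommutingOperator} gives $\sigma_{\PF}(A, (\lambda_0 - A)x) \subseteq \sigma_{\PF}(A, x)$ for every $x \in D(A)$, which is the claim.

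The only point requiring care is the interpretation of "commutes" in Proposition \ref{l:CommutingOperator}. Its proof uses $R(\alpha + i\xi, A)Ly = LR(\alpha+i\xi,A)y$ together with convergence of the boundary-value integrals in $D(A)$, so that $L$ can be moved outside the limit. Our identity supplies exactly the first ingredient. The convergence in $D(A)$ is part of the proposition as stated: for $y \in D(A)$ the map $\xi \mapsto R(\alpha+i\xi,A)y$ takes values in $D(A)$, and its image under $A$ is $R(\alpha+i\xi,A)Ay$, which also has distributional boundary values. Hence no additional argument is needed.
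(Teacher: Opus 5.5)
Your proposal is correct and matches the intended argument: the paper states this corollary without proof, as a direct application of Proposition \ref{l:CommutingOperator} with $Y = D(A)$ under the graph norm and $L = \lambda_0 - A$, which is why the graph-norm topology is introduced just before that proposition. Your checks that $L \in L(Y,X)$ and that $(\lambda_0 - A)R(\lambda,A)y = R(\lambda,A)(\lambda_0 - A)y$ for $y \in D(A)$ supply exactly the details the paper leaves implicit.
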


We conclude this section with three illustrative examples. We start by showing that in general $\sigma_{\PF}(A, x) \subsetneq \sigma_u(A, x)$. In fact, even if $\sigma_{\PF}(A, x) = \varnothing$, this does not necessarily guarantee that $\sigma_u(A, x)$ should be empty. We give two examples of the latter situation. In our first example, $R(i\xi, A)x$ is an $X$-valued pseudofunction that is discontinuous at $\xi=0$. 
We need the following lemma, which provides a family of instances of local pseudofunctions that are not $L^{1}_{\loc}$ at the origin.

\begin{lemma}
	\label{l:SuffCondNonContFourier}
	Let $f\in L^1_{\loc}(\R_+)$ be an eventually positive non-increasing function that satisfies $f(t)=o(1)$ as $t \to \infty$. 
	Suppose
		\begin{equation}
			\label{eq:Intf(t)/t=infty}
			\int^\infty_1\frac{f(t)}{t} \mathrm{d}t=\infty.
		\end{equation}
	Then $\widehat{f}\in \PFloc(\R)$ is continuous on $\R \setminus \{0\}$, but $\widehat{f} \notin L^{1}_{\loc}(\R)$.
\end{lemma}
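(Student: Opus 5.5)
The plan is to split $f$ into a harmless compactly supported part and a monotone tail, treat the pseudofunction property and continuity by elementary Fourier estimates, and prove non-integrability by a lower bound for the sine transform near the origin.

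\emph{Reduction.} Choose $t_0\geq1$ such that $f$ is positive and non-increasing on $[t_0,\infty)$. Write $f=f_1+f_2$ with $f_1=f\chi_{[0,t_0]}\in L^1(\R)$ and $f_2=f\chi_{(t_0,\infty)}$. Since $\widehat{f_1}$ is continuous and belongs to $C_0(\R)$, it does not affect any of the three claims: it is $L^1_{\loc}$, continuous, and a pseudofunction. It therefore suffices to treat $f_2$, which is bounded, non-negative, non-increasing, and $o(1)$. In particular $f_2\in\S'(\R)$.

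\emph{$\widehat f\in\PFloc(\R)$.} I use the criterion \eqref{eq:LocalPseudoTestFunc}. For $\varphi\in\D(\R)$ one has
\[
\ev{\widehat{f_2}(\xi)}{e^{it\xi}\varphi(\xi)}=\int_{t_0}^\infty f_2(s)\,\widehat\varphi(s-t)\,\mathrm{d}s .
\]
As $t\to+\infty$, this tends to $0$ because $f_2(s)=o(1)$ and $\widehat\varphi\in L^1$: split the integral at $s=t/2$. On the piece $s\leq t/2$, use $\|f_2\|_\infty$ together with the rapid decay of $\widehat\varphi(s-t)$. On the piece $s\geq t/2$, use $\sup_{s\ge t/2}f_2(s)\,\|\widehat\varphi\|_{L^1}\to0$. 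As $t\to-\infty$, the integral tends to $0$ because $s\geq t_0>0$ forces $|s-t|\geq |t|$, and $\widehat\varphi$ decays rapidly.

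\emph{Continuity on $\R\setminus\{0\}$.} Define $g(\xi)=\lim_{T\to\infty}\int_{t_0}^T f_2(t)e^{-i\xi t}\,\mathrm{d}t$ for $\xi\neq0$. By the second mean value theorem (Bonnet) applied to the monotone $f_2$, the tail satisfies
\[
\Big|\int_{T}^{T'} f_2(t)e^{-i\xi t}\,\mathrm{d}t\Big|\leq \frac{4f_2(T)}{|\xi|} .
\]
Hence the limit exists uniformly on $\{|\xi|\geq\eta\}$ for each $\eta>0$, so $g$ is continuous there. The truncations $f_2\chi_{[t_0,T]}$ converge to $f_2$ in $\S'$. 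Their Fourier transforms are continuous functions converging locally uniformly on $\R\setminus\{0\}$ to $g$, and also converging to $\widehat{f_2}$ in $\S'$. Therefore $\widehat{f_2}=g$ on $\R\setminus\{0\}$ as distributions.

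\emph{Not $L^1_{\loc}$.} This is the main step. Suppose $\widehat f\in L^1_{\loc}(\R)$. Then $\widehat{f_2}$ agrees with $g$ off the origin, so $g$ would have to be integrable near $0$. I show instead that $\int_0^\delta|g(\xi)|\,\mathrm{d}\xi=\infty$ via the imaginary part. Note $|g|\geq|\operatorname{Im}g|=S$, where
\[
S(\xi)=\int_{t_0}^\infty f_2(t)\sin(\xi t)\,\mathrm{d}t .
\]
For $\xi>0$ small, split $[t_0,\infty)$ into the half-periods of $\sin(\xi t)$. Because $f_2$ is non-negative and non-increasing, the contributions form an alternating series with decreasing moduli, from the first full positive lobe on. Consequently $S(\xi)$ is bounded below by a positive lobe integral, up to a bounded error coming from the partial lobe near $t_0$. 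The positive lobe on $[\pi/\xi, 2\pi/\xi]$ is not the right choice, since it enters with a negative sign. Instead, take the lobe on $[2k\pi/\xi,(2k+1)\pi/\xi]$ that starts the alternating sequence after $t_0$. This gives
\[
S(\xi)\geq \frac{2}{\xi}\,f_2\!\left(\frac{C}{\xi}\right)-O(1)
\]
for some constant $C$ and all small $\xi>0$. Integrating and substituting $u=C/\xi$, so that $\mathrm{d}\xi/\xi=\mathrm{d}u/u$, yields
\[
\int_0^\delta S(\xi)\,\mathrm{d}\xi\geq 2\int_{C/\delta}^\infty \frac{f_2(u)}{u}\,\mathrm{d}u-O(1)=\infty
\]
by \eqref{eq:Intf(t)/t=infty}.

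The main obstacle is making the lobe bookkeeping near $t_0$ rigorous, so that the error is genuinely $O(1)$ uniformly in small $\xi$. If it is cleaner, first shift so that $t_0=0$, using $f(\cdot+t_0)$ and the factor $e^{-i\xi t_0}$. Then $S(\xi)\geq\int_0^{\pi/\xi}f_2(t)\sin(\xi t)\,\mathrm{d}t\geq \frac{2}{\xi}f_2(\pi/\xi)$ directly, and the error coming from the phase factor is absorbed by replacing $S$ with a suitable combination of the real and imaginary parts.
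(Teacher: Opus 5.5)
\textbf{The gap: the pointwise lower bound.} Your reduction, the pseudofunction estimate, and the continuity argument via Bonnet's mean value theorem are fine. The paper gets continuity instead from $i\xi\widehat f=\widehat{f'}$ with $f'$ a finite measure. The problem is the non-integrability step. The lobe argument cannot bound $S$ below by a positive lobe. For an alternating series $a_0-a_1+a_2-\cdots$ with $a_k$ non-increasing to $0$, one only has $a_0-a_1\le\sum\le a_0$, so the first positive lobe is an \emph{upper} bound. The same reversal occurs in your last suggestion: $S(\xi)\ge\int_0^{\pi/\xi}f_2(t)\sin(\xi t)\,\mathrm{d}t$ should read $\le$.

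In fact the pointwise bound $S(\xi)\ge\frac{2}{\xi}f_2(C/\xi)-O(1)$ is false for every $C$. Take $f(t)=\sum_{j\ge1}j^{-2}\chi_{[0,2\pi j!]}(t)$. It is positive, non-increasing and $o(1)$, and $\int_1^\infty f(t)t^{-1}\,\mathrm{d}t=\sum_j j^{-2}\log(2\pi j!)=\infty$. Its sine transform is $S(\xi)=\xi^{-1}\sum_j j^{-2}\bigl(1-\cos(2\pi j!\,\xi)\bigr)$. At $\xi=1/K!$ every term with $j\ge K$ vanishes, which gives $S(1/K!)=O\bigl((K-1)!/K^{3}\bigr)$. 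On the other hand $\xi^{-1}f(C/\xi)\ge K!/(K+1)$ as soon as $2\pi(K+1)\ge C$. So the divergence cannot be detected pointwise; it only appears after averaging in $\xi$.

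\textbf{A repair within your framework.} Extend $f_2$ to $[0,t_0]$ by the constant $f(t_0^+)$; this changes $S$ by only $O(\xi)$. Let $\mu=-\mathrm{d}f_2$, a finite positive measure with $f_2(t)=\mu((t,\infty))$.
\begin{itemize}
\item Fubini gives $S(\xi)=\xi^{-1}\int(1-\cos\xi b)\,\mathrm{d}\mu(b)\ge0$.
\item Tonelli then yields $\int_0^\delta S(\xi)\,\mathrm{d}\xi=\int\mathrm{Cin}(\delta b)\,\mathrm{d}\mu(b)$, where $\mathrm{Cin}(x)=\int_0^x(1-\cos u)u^{-1}\,\mathrm{d}u\ge\log x-c$ for $x\ge1$.
\item Since $\int_{[1/\delta,\infty)}\log(\delta b)\,\mathrm{d}\mu(b)\ge\int_{1/\delta}^\infty f_2(t)t^{-1}\,\mathrm{d}t=\infty$, you get $\int_0^\delta|g|\ge\int_0^\delta S=\infty$.
\end{itemize}
Repaired this way, your proof is completely elementary: Bonnet's theorem plus positivity of the sine transform of a monotone function.

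The paper argues by contradiction instead. It takes a continuous primitive of $\widehat f$ near $0$ and invokes a Ces\`aro (hence Abel) summability theorem for distributional point values to recover that primitive's value at $0$ from the Fourier side. Monotone convergence then forces $\int_1^\infty f(t)t^{-1}\,\mathrm{d}t<\infty$.
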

	
\begin{proof}
		Without loss of generality, we may assume that $f \in C^1[0,2]$, that $f$ is non-increasing on $[1,\infty)$, and $f \equiv 0$ around the origin.
		First, we show the continuity of $\widehat{f}$ on $\R \setminus \{0\}$. 
		Note that its distributional derivative $f'(t)=\mathrm{d}f(t)$ is a measure of finite variation (on the whole of $\R$).
Thus $\widehat{f'} \in C(\R)$, and since $i \xi \widehat{f}(\xi) = \widehat{f'}(\xi)$, we see that $\widehat{f}$ is continuous away from the origin.
		
		Suppose now $\widehat{f}\in L^1(-1,1)$.
		Then $\widehat{f}$ has a continuous primitive $g$.
		By \cite[Theorem 10]{V-E-DistPointValConvergFourierSeriesInt}, there exists a $k \in \N$ such that we have the Ces\`aro limit
			\[ 
				2\pi g(0)
				= \lim_{r\to\infty}\int_{-r}^{r}\widehat{g}(\xi)\Big(1-\frac{\vert \xi \vert}{r}\Big)^k \, \mathrm{d}\xi .
			\]
		Therefore, we also find the Abel limit
			\[ 2\pi g(0) = \lim_{\varepsilon \to 0+} \int^\infty_{-\infty} e^{- \varepsilon \vert \xi \vert} \widehat{g}(\xi) \, \mathrm{d}\xi . \]
		Now 
		\[
		\widehat{g}(t) = 2\pi \left(\frac{f(-t)}{it} + g(0) \delta(t)\right),
		\]
		with $\delta$ the Dirac delta distribution.
	The monotone convergence theorem yields
			\[\int^\infty_1 \frac{f(t)}{t} \, \mathrm{d}t<\infty,\]
		which contradicts \eqref{eq:Intf(t)/t=infty}.
		Consequently, $\widehat{f}$ cannot be integrable on $(-1, 1)$.
	\end{proof}
	
\begin{example}
\label{ex:PFEmptyButNotContinuous}
There are bounded $C_0$-semigroups $\mathcal{T}$ with $x \in X$ such that $\sigma_{\PF}(A, x) = \varnothing$, but for which $R(i \, \cdot, A) x$ is not continuous at $0$. 
In particular, $0 \in \sigma_u(A,x)$.
\end{example}

\begin{proof}
	Take any $f \in C_0(\R_{+})$ satisfying the conditions of Lemma \ref{l:SuffCondNonContFourier}.
	A concrete example of such a function is given by $f(t)=1/\log(t+2)$.
	We consider the left-translation semigroup $\mathcal{T}$ on the Banach space $C_0(\R_{+})$. 
	Then $\mathcal{T}$ is clearly a bounded $C_0$-semigroup for which $\sigma(A) \cap i \R = i \R$. Furthermore, this semigroup is strongly stable. In particular, $\lim_{t\to\infty}T(t)f=0$, so that $R(i \xi, A) f=\mathcal{F}\{ T(t)f; \xi\} \in \PF(\R; C_{0}(\R_{+}))$ and thus $\sigma_{\PF}(A, f) = \varnothing$.
 Let us explicitly compute the $C_{0}(\R_{+})$-valued distribution $R(i \xi, A) f$.  
 For $\lambda \in \C_+$ and $s\in\R_{+}$,
		\[
			(R(\lambda,A)f)(s)
			= \int^\infty_0 e^{-\lambda t}f(t + s) \, \mathrm{d}t
			= e^{\lambda s}\int^\infty_s e^{-\lambda t} f(t) \, \mathrm{d}t
			= e^{\lambda s} \left( \L\{f; \lambda\} - \int_0^s e^{-\lambda t}f(t) \, \mathrm{d}t \right) .
		\]
	Therefore, its boundary value distribution is given by
		\[ (R(i \xi, A) f)(s) = e^{i \xi s} \left( \widehat{f}(\xi) - \int_0^s e^{- i \xi t} f(t) \mathrm{d}t \right) . \]
Since $\widehat{f}\in C(\R\setminus\{0\})$ in view of Lemma \ref{l:SuffCondNonContFourier}, we infer that $R(i \xi, A)f\in C(\R\setminus\{0\}, C_{0}(\R_{+}))$. On the other hand, again by Lemma \ref{l:SuffCondNonContFourier}, for a fixed $s$, the scalar-valued distribution $(R(i \xi, A) f)(s)$ is not continuous at $\xi=0$.
	As a result, neither can the $C_0(\R_{+})$-valued distribution $R(i \xi, A) f$ be continuous at the origin.
\end{proof}

Next, we see that even the most extreme case can occur: $\sigma_{\PF}(A, x) = \varnothing$ but $\sigma_u(A, x) = i \R$. 
The following examples already appear in \cite[Example 4.1]{B-V-R-LocalSpecInStab}, where Batty, Van Neerven, and R\"abiger used them to illustrate the existence of stable $C_0$-semigroups for which $\sigma_u(A, x) = i \R$ for any $x \neq 0$. 

\begin{example}
\label{ex:PseudoZeroUnitaryWholeLine}
Let $X=L^1(\R_+, w(s) \mathrm{d}s)$, where $w:\R_+\to \R_+$ satisfies
\begin{enumerate}
	\item $w$ is non-increasing,
	\item $\lim_{s\to\infty}w(s)=0$,
	\item for each $a>0$, there exists a constant $c>0$ such that $w(s)\geq ce^{-as}$, for all $s\geq0$.
	\end{enumerate}
Let $\mathcal{T}$ be the right-translation semigroup on $X$ defined as
\[
	(T(t)f)(s)
	=
		\begin{cases}
			f(s-t) , &s \geq t,\\
			0 ,  & s < t,
		\end{cases}
	\qquad
	f \in L^1(\R_+, w(s) \mathrm{d}s) .
\]
Then $\sigma_u(A, f) = i \R$ for all $f \in X \setminus \{0\}$, see \cite[Example 4.1]{B-V-R-LocalSpecInStab}.
However, we now verify that $\sigma_{\PF}(A, f) = \varnothing$ for every $f \in X$. 
In fact, if $f \in X$ has compact support, the support of $T(t)f$ is that of $f$ translated by $t$, so (1) and (2) yield that $T(t)f = o(1)$ as $t \to \infty$.
Since $\mathcal{T}$ is bounded, a density argument shows that $T(t)f=o(1)$ as $t\to\infty$ for each $f\in X$. Therefore, as $R(i\xi, A)f$ is the Fourier transform of $T(t)f$, it is a global $X$-valued pseudofunction and we thus obtain $\sigma_{\PF}(A, f) = \varnothing$ for every $f \in X$. Naturally, we also obtain here that $\sigma_{\PF}(A) = \varnothing$ but $\sigma(A) \cap i \R = i \R$. 
\end{example}

We now provide examples for which the equality \eqref{eq:UnionIsPseudoSpectrum} fails. This family of examples already appeared in \cite[Example 2.5(a)]{A-B-TaubThStabSemigr} for different illustrative purposes under stronger constraints on the Borel measure (see also Example \ref{ExampleConditionMainResult} below).

\begin{example}
\label{ex:UnionNotPseudoSpectrum}
Let $\Omega$ be a non-empty closed subset of $\R$ and let $\mu$ be a non-trivial non-negative Borel measure on it. 
We consider the space $X = L^2(\Omega, \mu)$ and the multiplication semigroup $(T(t) f)(u) = e^{itu} f(u)$.
It is an isometric $C_0$-semigroup on $X$ whose generator is given by $f(u) \mapsto i u f(u)$.

We claim that
	\begin{equation}
		\label{eq:PFSpec=EssSupp}
		\sigma_{\PF}(A, f) = i \, \text{ess}\supp(f) , \qquad f \in L^2(\Omega, \mu) ,
	\end{equation}
	where the essential support is taken with respect to $\mu$. Let $f \in L^2(\Omega, \mu)$ be fixed.
For any $\varphi \in \D(\R)$,
	\begin{align*}
        		\| \ev{R(i \xi,A) f}{e^{it\xi} \varphi(\xi)} \|^2_{X} 
		&=\int_{\Omega} \left| \int^\infty_0\widehat{\varphi}(s-t) (e^{isu} f(u))  \,  \mathrm{d}s \right|^2 \, \mathrm{d}\mu(u) \\
		&= \int_\Omega |f(u)|^2 \left| \int_{-t}^\infty  e^{isu} \widehat{\varphi}(s) \, \mathrm{d}s \right|^2 \, \mathrm{d}\mu(u) .
    \end{align*}
By the Lebesgue dominated convergence theorem, 
	\[ 
		\lim_{t \to -\infty}  \| \ev{R(i \xi,A) f}{e^{it\xi} \varphi(\xi)} \|^2_{X}  = 0 
	\]
and
	\[
		\lim_{t \to +\infty} \| \ev{R(i \xi,A) f}{e^{it\xi} \varphi(\xi)} \|^2_{X}  = 2 \pi \int_{\Omega } |\varphi(u)|^2|f(u)|^2  \, \mathrm{d}\mu(u).
	\]
Directly, one sees $\sigma_{\PF}(A,f)\subseteq i \, \text{ess}\supp(f)$.
Now take any $\xi_0 \in \text{ess}\supp(f)$. Let $I$ be an open interval containing the point $\xi_0$, then $\mu(\{u\in I\cap \Omega\mid |f(u)|>0 \})>0$. There are then $\varepsilon>0$ and a compact subset $\xi_0\in K\subset I\cap\Omega$ such that $\mu(K)>0$ and $|f(u)|\geq \varepsilon$ for all $u\in K$.
Then, for any $\varphi \in \D(I)$ satisfying $\varphi \equiv 1$ on $K$,
	 \[ \int_{\Omega }|\varphi(u)|^2 |f(u)|^2  \, \mathrm{d}\mu(u) \geq\int_{K} |f(u)|^2 \, \mathrm{d}\mu(u) \geq \mu(K) \varepsilon^{2} > 0. \]
Consequently, $R(i \xi, A) f \notin \PFloc(I; L^2(\Omega, \mu))$.
As $I$ was arbitrary, we see that $i \, \text{ess}\supp(f)\subseteq\sigma_{\PF}(A,f)$.
We have thus established \eqref{eq:PFSpec=EssSupp}.

Using \eqref{eq:PFSpec=EssSupp}, we may now easily show that \eqref{eq:UnionIsPseudoSpectrum} does not hold in general.
Let $\Omega = [0, 1]$ and $\mu$ be the Lebesgue measure on it.
Let $\chi_n$ be the characteristic function of $[\frac{1}{n}, 1]$ for $n \in \N$.
Set $M = \{\chi_n \mid n \in \N\}$. 
By \eqref{eq:PFSpec=EssSupp}, it holds that 
  	\[ \bigcup_{n = 1}^\infty \sigma_{\PF}(A,\chi_n) = \bigcup_{n = 1}^\infty \text{ess}\supp(\chi_n) = \bigcup_{n = 1}^\infty \left[\frac{1}{n}, 1\right] = (0, 1] .\]
As $\sigma_{\PF}(A,M)$ is necessarily closed (in fact, it is equal to $[0, 1]$), the two sets do not coincide.
\end{example}

\section{The stability of orbits}
\label{sec:StabOrbits}

Let $\mathcal{T}$ be a $C_0$-semigroup and $x \in X$. We call $\{ T(t) x \mid t \geq 0 \}$ the \emph{orbit of $x$ under $\mathcal{T}$}, and often simply write $T(t)x$ for it.
An orbit $T(t) x$ is said to be \emph{stable} if 
	\[ \lim_{t \to \infty} \| T(t) x \|_X = 0 . \]

We shall now provide several characterizations of the stability of an orbit for bounded $C_0$-semigroups in terms of the local pseudofunction spectrum at a vector (and local properties of the resolvent). The following theorem is the main result of this section.
We employ the notation ``$\wlim{}$'' for a weak limit in the Banach space.

\begin{theorem}
\label{t:StableOrbitCountable}
Let $\mathcal{T}$ be a bounded $C_0$-semigroup and let $x\in X$. The following statements are equivalent:
	\begin{itemize}
		\item[$(a)$] The orbit $T(t)x$ is stable;
	\item[$(b)$] $\sigma_{\PF}(A, x) = \varnothing$;
		\item[$(c)$] $\sigma_{\PF}(A, x)$ is countable and $x \in \overline{\rg(i\xi - A)}$ for each $i \xi \in \sigma_{\PF}(A, x)$;
		\item[$(d)$] $\sigma_{\PF}(A, x)$ is countable and $\lim_{\alpha\to0^+} \alpha R(\alpha + i\xi, A) x=0$ for each $i \xi \in \sigma_{\PF}(A, x)$;
		\item[$(e)$] $\sigma_{\PF}(A, x)$ is countable and $\wlim{\alpha\to0^+} \alpha R(\alpha + i\xi, A) x=0$ for each $i \xi \in \sigma_{\PF}(A, x)$;
	\end{itemize}
\end{theorem}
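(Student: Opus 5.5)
We prove $(a)\Rightarrow(b)\Rightarrow(c)\Rightarrow(d)\Rightarrow(e)\Rightarrow(a)$, using $\tau(t)=T(t)x$ on $\R_+$ (extended by $0$ to $t<0$). Since $\mathcal{T}$ is bounded, $\tau$ is bounded and uniformly continuous, and $\L\{\tau;\lambda\}=R(\lambda,A)x$. By \eqref{eq:FourierTransformBoundaryLaplace}, $\sigma_{\PF}(A,x)=i\singsupp_{\PF}\widehat\tau$.

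The implication $(a)\Rightarrow(b)$ is the implication $(a)\Rightarrow(b)$ of Theorem \ref{MainResultGregoryJasson}. The implication $(b)\Rightarrow(c)$ is vacuous. For $(c)\Rightarrow(d)$, fix $i\xi\in\sigma_{\PF}(A,x)$. For $y=(i\xi-A)z$ with $z\in D(A)$ we have
\[
\alpha R(\alpha+i\xi,A)y=\alpha\bigl(-z+\alpha R(\alpha+i\xi,A)z\bigr),
\]
which tends to $0$ as $\alpha\to0^+$ by \eqref{eq:resolvent bound}. Since $\|\alpha R(\alpha+i\xi,A)\|\le C$ uniformly in $\alpha$, a density argument on $\overline{\rg(i\xi-A)}$ gives the limit for $x$. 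The implication $(d)\Rightarrow(e)$ is trivial.

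The substantial step is $(e)\Rightarrow(a)$. Set $E=\singsupp_{\PF}\widehat\tau$, a countable closed set and hence null. We want to apply Theorem \ref{MainResultGregoryJasson}$(c)$, but its hypothesis requires bounded primitives $\int_0^t e^{-i\xi_0 s}T(s)x\,\mathrm{d}s$ for $\xi_0\in E$, and (e) does not give this directly. I would therefore first reduce to vectors of the form $y=(i\xi_0-A)z$. For such vectors,
\[
\int_0^t e^{-i\xi_0 s}T(s)(i\xi_0-A)z\,\mathrm{d}s=z-e^{-i\xi_0 t}T(t)z,
\]
which is bounded. The plan is to exploit countability through a Baire argument on $E$ instead of working with all points at once. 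Let $F\subseteq E$ be the set of points of $E$ at which $x$ fails to be a local pseudofunction after the following reduction. Suppose $F\neq\varnothing$. A countable closed nonempty set has an isolated point $\xi_0$, so there is an interval $J$ with $J\cap E=\{\xi_0\}$. By the mean ergodic theorem, (e) gives $x\in\overline{\rg(i\xi_0-A)}$. Pick $y_n=(i\xi_0-A)z_n\to x$. By Corollary \ref{c:SpectrumOf(A-s)}, $\sigma_{\PF}(A,y_n)\subseteq\sigma_{\PF}(A,z_n)$, and we need $\sigma_{\PF}(A,y_n)\cap iJ\subseteq\{i\xi_0\}$. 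Here is the obstacle: $z_n$ is uncontrolled. To handle it, I would choose $z_n=\alpha_n R(\alpha_n+i\xi_0,A)$-type approximants. Concretely, take
\[
y_\alpha=x-\alpha R(\alpha+i\xi_0,A)x=(i\xi_0-A)R(\alpha+i\xi_0,A)x,
\]
so that $y_\alpha\to x$ by (d) (the weak version upgrades to norm via the mean ergodic theorem). Since $R(\alpha+i\xi_0,A)$ commutes with the resolvent, Proposition \ref{l:CommutingOperator} gives $\sigma_{\PF}(A,y_\alpha)\subseteq\sigma_{\PF}(A,x)$.

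Now apply Theorem \ref{MainResultGregoryJasson} locally, in the form of Theorem \ref{t:SuffCondVanishSingSupp} on $I=J$ with null set $\{\xi_0\}$. Its primitive hypothesis holds for $y_\alpha$ by the computation above with $z=R(\alpha+i\xi_0,A)x$. This yields that $R(i\cdot,A)y_\alpha$ is a local pseudofunction on $J$. The same closed-subspace argument as in Proposition \ref{PFClosure} then passes this to the limit $x$, so $\xi_0\notin E$, a contradiction.

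To organize this over all of $E$, I would use transfinite Cantor–Bendixson derivatives, or equivalently the fact that the set $E'$ of points where $x$ is not locally PF is closed and countable and so, if nonempty, has an isolated point. The argument above removes that point, showing $E'=\varnothing$. Then (b) holds, and Theorem \ref{MainResultGregoryJasson} gives (a). The main care point is verifying that Theorem \ref{t:SuffCondVanishSingSupp} applies to $\tau_\alpha(t)=T(t)y_\alpha$ on the open set $J$ with $E=\{\xi_0\}$.
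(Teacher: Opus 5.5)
Your proof is correct and follows essentially the paper's route: the paper packages your $(e)\Rightarrow(a)$ argument as Lemma~\ref{LocalVersionIsolatedPoints}, which shows that no point with $x\in\overline{\rg(i\xi_0-A)}$ is isolated in $\sigma_{\PF}(A,x)$ using the same ingredients ($x_\alpha=(i\xi_0-A)R(\alpha+i\xi_0,A)x$, Proposition~\ref{l:CommutingOperator}, Theorem~\ref{t:SuffCondVanishSingSupp} with $E=\{\xi_0\}$, Proposition~\ref{PFClosure}), and then uses that nonempty perfect sets are uncountable, which is exactly your isolated-point contradiction. Since your $E$ is already the full singular support, no Cantor--Bendixson iteration is needed, and there is a harmless sign slip in your $(c)\Rightarrow(d)$ computation, where in fact $\alpha R(\alpha+i\xi,A)(i\xi-A)z=\alpha z-\alpha^2R(\alpha+i\xi,A)z$.
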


Theorem \ref{t:StableOrbitCountable} considerably improves upon the local version of the ABLV theorem \cite[Theorem 3.4]{B-V-R-LocalSpecInStab}.  Example \ref{ex:PseudoZeroUnitaryWholeLine} shows that there are bounded $C_0$-semigroups for which one cannot detect their orbit stability through the local unitary spectrum; moreover, \cite[Theorem 3.4]{B-V-R-LocalSpecInStab} is not applicable to such a strongly stable semigroup. In contrast, Theorem \ref{t:StableOrbitCountable} provides the right framework for characterizing stability of orbits through the local pseudofunction spectrum.

We need two lemmas in preparation for the proof of Theorem \ref{t:StableOrbitCountable}. Our first auxiliary lemma is the following version of the mean ergodic theorem.

\begin{lemma}
\label{l:Ergodic}
Let $\mathcal{T}$ be a bounded $C_0$-semigroup.
For any $\xi_0 \in \R$ and $x \in X$, the following are equivalent:
	\begin{itemize}
		\item[$(a)$] $x \in \overline{\rg{(i \xi_0 - A)}}$;
		\item[$(b)$] $\lim_{\alpha\to0^+} \alpha R(\alpha+ i \xi_0, A)x = 0$;
		\item[$(c)$] $\wlim{\alpha \to 0^+} \alpha R(\alpha + i \xi_0, A) x = 0$.
	\end{itemize}
If the statements hold true, we have
	\begin{equation}
		\label{ApproximatedNet}
		\rg{(i\xi_0 - A)} \ni x_\alpha :=(i\xi_0 - A)R(\alpha+i\xi_0, A)x \to x , \qquad \text{as } \alpha \to 0^+ .
	\end{equation}
\end{lemma}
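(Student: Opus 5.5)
The plan is the standard mean ergodic argument for the bounded semigroup $S(t)=e^{-i\xi_0 t}T(t)$. Its generator is $B=A-i\xi_0$, and for $\alpha>0$ we have $\alpha R(\alpha,B)=\alpha R(\alpha+i\xi_0,A)$. Set $K:=\sup_{t\ge0}\|T(t)\|_{L(X)}$. By \eqref{eq:resolvent bound}, or directly from the Laplace integral, $\|\alpha R(\alpha+i\xi_0,A)\|_{L(X)}\le K$ for every $\alpha>0$. The key algebraic identity, valid for every $x\in X$, is
\[
x - \alpha R(\alpha+i\xi_0,A)x = (\alpha+i\xi_0-A)R(\alpha+i\xi_0,A)x-\alpha R(\alpha+i\xi_0,A)x=(i\xi_0-A)R(\alpha+i\xi_0,A)x = x_\alpha .
\]
Since $R(\alpha+i\xi_0,A)x\in D(A)$, each $x_\alpha$ lies in $\rg(i\xi_0-A)$. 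So \eqref{ApproximatedNet} is equivalent to (b), and it will come for free once (b) is proved.

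$(a)\Rightarrow(b)$: First take $x=(i\xi_0-A)y$ with $y\in D(A)$. Then
\[
\alpha R(\alpha+i\xi_0,A)x=\alpha R(\alpha+i\xi_0,A)\big((\alpha+i\xi_0-A)y-\alpha y\big)=\alpha y-\alpha^2R(\alpha+i\xi_0,A)y ,
\]
and both terms are $O(\alpha)$, the second because $\|\alpha R(\alpha+i\xi_0,A)\|_{L(X)}\le K$. So (b) holds on $\rg(i\xi_0-A)$. The uniform bound $\|\alpha R(\alpha+i\xi_0,A)\|_{L(X)}\le K$ then passes the limit to the closure by a $3\varepsilon$ argument.

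$(b)\Rightarrow(c)$ is trivial.

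$(c)\Rightarrow(a)$: This is the only point needing a little care. Suppose $\alpha R(\alpha+i\xi_0,A)x\rightharpoonup 0$ weakly. By the identity above, $x_\alpha=x-\alpha R(\alpha+i\xi_0,A)x\rightharpoonup x$ weakly, and each $x_\alpha$ belongs to $\rg(i\xi_0-A)$. Hence $x$ lies in the weak closure of the subspace $\rg(i\xi_0-A)$. By Mazur's theorem (Hahn--Banach), the weak closure of a convex set coincides with its norm closure, so $x\in\overline{\rg(i\xi_0-A)}$. Alternatively, one can argue directly: any $x^*$ vanishing on $\rg(i\xi_0-A)$ annihilates every $x_\alpha$, hence $\langle x^*,x\rangle=\lim_{\alpha\to0^+}\langle x^*,x_\alpha\rangle=0$.

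I do not expect a genuine obstacle in this argument. The only points to state explicitly are the uniform bound $\|\alpha R(\alpha+i\xi_0,A)\|_{L(X)}\le K$, which drives the density step, and the Hahn--Banach step in $(c)\Rightarrow(a)$.
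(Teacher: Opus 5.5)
Your proof is correct and follows the paper's argument: $(c)\Rightarrow(a)$ is the same weak-closure plus Hahn--Banach step using $x-x_\alpha=\alpha R(\alpha+i\xi_0,A)x$, and \eqref{ApproximatedNet} follows from that identity and $(b)$. The only difference is that the paper cites \cite[Proposition 4.3.21]{A-B-H-N-VVLaplaceTransCauchyProb} for $(a)\Rightarrow(b)$, whereas you prove it directly on $\rg(i\xi_0-A)$ and extend it by density using the uniform bound $\|\alpha R(\alpha+i\xi_0,A)\|_{L(X)}\le K$.
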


\begin{proof}
$(a) \Longrightarrow (b)$ is shown in, for example, \cite[Proposition 4.3.21, p.~261]{A-B-H-N-VVLaplaceTransCauchyProb}, while $(b) \Longrightarrow (c)$ is trivial.
Suppose now $\wlim{\alpha \to 0^+} \alpha R(\alpha + i \xi_0, A) x = 0$ and let $x_\alpha$ be as in \eqref{ApproximatedNet} for $\alpha > 0$.
For any $x' \in X^\prime$,
	\[ \lim_{\alpha \to 0^+} \ev{x'}{x - x_\alpha} =  \lim_{\alpha \to 0^+} \ev{x'}{\alpha R(\alpha + i\xi_0, A) x} = 0 . \]
Therefore, $x$ lies in the weak closure of $\rg{(i \xi_0 - A)}$.
The latter is a subspace of $X$, so that it coincides with $\overline{\rg{(i \xi_0 - A)}}$ in view of the Hahn-Banach theorem.
\end{proof}

Next, we observe that, when any of the equivalent statements from Lemma \ref{l:Ergodic} holds,  the point $i\xi_0$ cannot be an isolated point of the local pseudofunction spectrum of $A$ at $x$.

\begin{lemma}
\label{LocalVersionIsolatedPoints}
Let $\mathcal{T}$ be a bounded $C_0$-semigroup.
Let $x \in X$ and $\xi_0 \in \R$ be such that $x\in \overline{\rg(i \xi_0 - A)}$.
Then, $i \xi_0$ is not an isolated point of $\sigma_{\PF}(A, x)$.
\end{lemma}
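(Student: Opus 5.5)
Assume, to reach a contradiction, that $i\xi_0$ is an isolated point of $\sigma_{\PF}(A,x)$. Then there is an open interval $I\ni\xi_0$ with $I\cap \singsupp_{\PF} R(i\,\cdot,A)x\subseteq\{\xi_0\}$. The plan is to use Theorem \ref{t:SuffCondVanishSingSupp}, with the null set $E=\{\xi_0\}$, but applied to approximating vectors $x_\alpha$ instead of to $x$ itself. By Lemma \ref{l:Ergodic}, the vectors $x_\alpha=(i\xi_0-A)R(\alpha+i\xi_0,A)x$ lie in $\rg(i\xi_0-A)$ and converge to $x$ as $\alpha\to0^+$. I will show that $\xi_0\notin\singsupp_{\PF}R(i\,\cdot,A)x_\alpha$ for every $\alpha>0$. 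Since the set $V=\{y\in X\mid R(i\,\cdot,A)y\in\PFloc(I;X)\}$ is closed (the argument in the proof of Proposition \ref{l:LocalPseudofunctionSpectrumIsUnion}, via \eqref{eq:uniform switiching auxiliary}), it will follow that $x\in V$, so $i\xi_0\notin\sigma_{\PF}(A,x)$. This contradicts $i\xi_0$ being a point of that set.

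For a fixed $\alpha$, write $y_\alpha=R(\alpha+i\xi_0,A)x\in D(A)$, so that $x_\alpha=(i\xi_0-A)y_\alpha$.

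First, I control the local pseudofunction singular support of $x_\alpha$ on $I$. The operator $L=R(\alpha+i\xi_0,A)$ is in $L(X)$ and commutes with the resolvent, so Proposition \ref{l:CommutingOperator} gives $\sigma_{\PF}(A,y_\alpha)\subseteq\sigma_{\PF}(A,x)$. Corollary \ref{c:SpectrumOf(A-s)} then gives $\sigma_{\PF}(A,x_\alpha)\subseteq\sigma_{\PF}(A,y_\alpha)$. Hence $I\cap\singsupp_{\PF}\widehat{\tau}\subseteq\{\xi_0\}$, where $\tau(t)=T(t)x_\alpha$ for $t\ge0$ and $\tau(t)=0$ for $t<0$. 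This $\tau$ lies in $L^\infty(\R;X)$, and $\widehat\tau=R(i\,\cdot,A)x_\alpha$ by \eqref{eq:FourierTransformBoundaryLaplace}.

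Second, I verify the Tauberian hypothesis \eqref{BoundFourierTransform2} at $\xi_0$. The key identity is the standard semigroup formula for $y\in D(A)$:
\[
\int_0^t e^{-i\xi_0 s}T(s)(i\xi_0-A)y\,\mathrm{d}s = y-e^{-i\xi_0 t}T(t)y .
\]
It follows by differentiating $s\mapsto e^{-i\xi_0 s}T(s)y$. Applying it with $y=y_\alpha$, the integral $\int_0^t e^{-i\xi_0 s}\tau(s)\,\mathrm{d}s$ is bounded by $(1+\sup_s\|T(s)\|)\|y_\alpha\|_X$ for $t\ge0$, and it vanishes for $t<0$. So the supremum in \eqref{BoundFourierTransform2} is finite. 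Theorem \ref{t:SuffCondVanishSingSupp} now gives $I\cap\singsupp_{\PF}R(i\,\cdot,A)x_\alpha=\varnothing$, that is, $x_\alpha\in V$.

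The main subtlety is the final passage to the limit. The closedness of $V$ needs a single fixed interval $I$ that works uniformly for all $x_\alpha$, and the spectral inclusion from Proposition \ref{l:CommutingOperator} and Corollary \ref{c:SpectrumOf(A-s)} supplies exactly this. One should also check that Corollary \ref{c:SpectrumOf(A-s)} applies to $y_\alpha\in D(A)$, which it does. With these two points in place, letting $\alpha\to0^+$ completes the argument.
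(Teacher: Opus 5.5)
Your proof is correct and is essentially the paper's argument: you approximate $x$ by $x_\alpha=(i\xi_0-A)R(\alpha+i\xi_0,A)x\in\rg(i\xi_0-A)$, transfer the spectral hypothesis to $x_\alpha$ via Proposition~\ref{l:CommutingOperator}, remove $\xi_0$ with Theorem~\ref{t:SuffCondVanishSingSupp} using the bounded primitive $y-e^{-i\xi_0 t}T(t)y$, and pass to the limit by closedness. The differences are only cosmetic: the paper treats $x\in\rg(i\xi_0-A)$ as a separate first case, applies Proposition~\ref{l:CommutingOperator} directly to $L=(i\xi_0-A)R(\alpha+i\xi_0,A)$ instead of factoring through Corollary~\ref{c:SpectrumOf(A-s)}, and concludes via Proposition~\ref{PFClosure} with $M=\{x_\alpha\mid \alpha>0\}$ rather than via the closedness of your set $V$.
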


\begin{proof}
We assume first that $x \in \rg(i \xi_{0} - A)$. Suppose that there is a non-empty interval $I = (\xi_0 - \varepsilon, \xi_0 + \varepsilon)$ such that  
\begin{equation}
\label{eq:specisocondpoint}
[ i (I \setminus \{\xi_0\})] \cap \sigma_{\PF}(A, x) = \varnothing . 
\end{equation}
We must show that $i\xi_0 \notin \sigma_{\PF}(A, x)$. Our assumption \eqref{eq:specisocondpoint} is the same as $R(i \xi, A)x=\mathcal{F}\{T(t)x;\xi\}\in \PFloc(I \setminus\{\xi_0\}; X)$. We verify that Theorem \ref{t:SuffCondVanishSingSupp} applies to $\tau(t)=T(t)x$ with $E=\{\xi_0\}$. In fact, writing $x = (i \xi_0 - A) y$ with $y\in D(A)$, we have
	\begin{align*} 
		\int_0^t e^{- i \xi_0 s}T(s) x \, \mathrm{d}s 
		&= \int_0^t e^{- i \xi_0 s}T(s) (i \xi_0 - A) y   \, \mathrm{d}s 
		= - \int_0^t \frac{\mathrm{d}}{\mathrm{d}s} [ e^{- i \xi_0 s}T(s) y] \, \mathrm{d}s 
		\\
		&= y - e^{- i \xi_0 t}T(t) y=O(1). 
	\end{align*}
Theorem 	\ref{t:SuffCondVanishSingSupp} thus yields $i I \cap \sigma_{\PF}(A, x) = \varnothing $, completing the proof of the lemma in this case.

Suppose now that $x \notin \rg(i \xi_0 - A)$ and let $x_\alpha$ be the approximation net as in \eqref{ApproximatedNet}.
Let $\varepsilon>0$ be such that $[i(I \setminus \{\xi_0\})] \cap \sigma_{\PF}(A, x) = \varnothing$ where $I = (\xi_0 - \varepsilon, \xi_0 + \varepsilon)$.
By Proposition \ref{l:CommutingOperator}, with $Y = X$ and $L = (i \xi_0 - A) R(\alpha + i \xi_0, A)$, we must also have $[i(I \setminus \{\xi_0\})] \cap \sigma_{\PF}(A, x_\alpha) = \varnothing$ for $\alpha > 0$.
Consequently, the first case already analyzed above yields $iI \cap \sigma_{\PF}(A, x_\alpha) = \varnothing$ for $\alpha > 0$.
Set $M = \{ x_\alpha \mid \alpha > 0 \}$, then $iI \cap \sigma_{\PF}(A, M) = \varnothing$.
Therefore, by \eqref{eq:InclusionChainSpectra} and \eqref{eq:PFClosure}, we get $iI \cap \sigma_{\PF}(A, x) \subseteq iI \cap \sigma_{\PF}(A, \overline{M}) = \varnothing$.
\end{proof}
Lemma \ref{LocalVersionIsolatedPoints} and Proposition \ref{l:LocalPseudofunctionSpectrumIsUnion} yield:
\begin{corollary}
Let $\mathcal{T}$ be a bounded $C_0$-semigroup such that $\sigma_{R}(A)\cap i\R=\varnothing$. Then $\sigma_{\PF}(A)$ is a perfect set.
\end{corollary}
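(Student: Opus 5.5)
We have to show that $\sigma_{\PF}(A)$ is closed and has no isolated points. Closedness is built into Definition \ref{def:LocalPseudofuncSpect}$(ii)$, since $\sigma_{\PF}(A)$ is the complement in $i\R$ of an open set. So the whole task is to rule out isolated points, and I would argue by contradiction.

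Suppose $i\xi_0$ is an isolated point of $\sigma_{\PF}(A)$. Then there is $\varepsilon>0$ with
\[
[i(I\setminus\{\xi_0\})]\cap\sigma_{\PF}(A)=\varnothing, \qquad I=(\xi_0-\varepsilon,\xi_0+\varepsilon).
\]
Next, use the hypothesis $\sigma_R(A)\cap i\R=\varnothing$. Since $i\xi_0\in\sigma_{\PF}(A)\subseteq\sigma(A)\cap i\R$, the point $i\xi_0$ is not in the residual spectrum. Hence $\rg(i\xi_0-A)$ is dense, so $x\in\overline{\rg(i\xi_0-A)}$ for every $x\in X$.

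Now fix an arbitrary $x\in X$. By \eqref{eq:InclusionChainSpectra}, $\sigma_{\PF}(A,x)\subseteq\sigma_{\PF}(A)$, so $\sigma_{\PF}(A,x)$ meets $iI$ at most in $\{i\xi_0\}$. There are two cases.
\begin{itemize}
\item If $i\xi_0\notin\sigma_{\PF}(A,x)$, then $iI\cap\sigma_{\PF}(A,x)=\varnothing$ directly.
\item If $i\xi_0\in\sigma_{\PF}(A,x)$, then it is an isolated point of $\sigma_{\PF}(A,x)$. This contradicts Lemma \ref{LocalVersionIsolatedPoints}, which applies because $x\in\overline{\rg(i\xi_0-A)}$.
\end{itemize}
So in either case $i\xi_0\notin\sigma_{\PF}(A,x)$, for every $x\in X$.

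Finally, Proposition \ref{l:LocalPseudofunctionSpectrumIsUnion}, in the form \eqref{eq:UnionIsFullPseudoSpectrum}, gives $\sigma_{\PF}(A)=\bigcup_{x\in X}\sigma_{\PF}(A,x)$. Therefore $i\xi_0\notin\sigma_{\PF}(A)$, contradicting the choice of $i\xi_0$. Hence $\sigma_{\PF}(A)$ has no isolated points and is perfect.

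The step that needs the most care is the passage from the local statement back to the global spectrum. Without the Baire argument behind \eqref{eq:UnionIsFullPseudoSpectrum}, knowing $i\xi_0\notin\sigma_{\PF}(A,x)$ for each $x$ would not suffice (compare Example \ref{ex:UnionNotPseudoSpectrum}). Here that proposition does all the work, since $X$ itself is a Baire space.
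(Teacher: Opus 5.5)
Your proof is correct and follows the paper's route. The paper obtains the corollary by combining Lemma \ref{LocalVersionIsolatedPoints} (applicable at every point of $\sigma_{\PF}(A)\subseteq\sigma(A)\cap i\R$ because the hypothesis makes $\rg(i\xi_0-A)$ dense there) with the union formula \eqref{eq:UnionIsFullPseudoSpectrum}, which is exactly what you do.

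One small correction to your closing remark: because your interval $I$ does not depend on $x$, you have in fact shown $R(i\,\cdot\,,A)x\in\PFloc(I;X)$ for every $x\in X$. Definition \ref{def:LocalPseudofuncSpect}$(ii)$ then gives $iI\cap\sigma_{\PF}(A)=\varnothing$ directly, so the Baire argument is dispensable here rather than doing all the work.
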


\begin{proof}[Proof of Theorem \ref{t:StableOrbitCountable}]
\quad

    $(a) \Longleftrightarrow (b)$: Apply Theorem \ref{MainResultGregoryJasson} to $\tau(t) =  T(t)x$, whose Laplace transform is $R(\lambda, A)x$.
    
    $(b) \Longrightarrow (c)$: Trivial. 
    
    $(c) \Longleftrightarrow (d) \Longleftrightarrow (e)$: By Lemma \ref{l:Ergodic}.
    
        $(c) \Longrightarrow (b)$: Lemma \ref{LocalVersionIsolatedPoints} shows that the set $\sigma_{\PF}(A, x)$ must be perfect. Since perfect sets are either uncountable or empty \cite{NatansonBook}, we obtain that $\sigma_{\PF}(A, x)=\varnothing.$

        \end{proof}      
        
\subsection*{A remark on the boundedness assumption}\label{Boundedness}
According to Remark \ref{r:InclusionsSpectra}{$(v)$}, the local pseudofunction spectrum may be defined even if we relax the assumption of boundedness for the $C_0$-semigroup $\mathcal{T}$ to being polynomially bounded.
However, without the boundedness assumption on the semigroup, the condition $\sigma_{\PF}(A, x) = \varnothing$ alone might fail to deliver the stability of the orbit.
This already follows from an example Arendt and Batty constructed in \cite{A-B-TaubThStabSemigr}. We review the example here for the sake of completeness.

\begin{example}\label{ExampleBoundedness}
\label{ex:BoundCondNess1}
In \cite[Example 2.5(b)]{A-B-TaubThStabSemigr}, the following example is considered:
Let $X = c_0$ and let $\mathcal{T} = (T(t))_{t \geq 0}$ be the $C_0$-semigroup acting on $x = (x_n)_{n \in \N} \in c_0$ as
    \[ (T(t)x)_{2n-1}=e^{-t(1/n^2+in)}[x_{2n-1}+tx_{2n}], \qquad (T(t)x)_{2n}= e^{-t(1/n^2+in)} x_{2n},  \qquad n \in \N . \]
    
Its generator is given by
    \[
        (Ax)_{2n-1} = -\left(\frac{1}{n^2}+in\right)x_{2n-1}+ x_{2n}, \qquad
        (Ax)_{2n} = -\left(\frac{1}{n^2}+in\right)x_{2n} , \qquad
        n \in \N .
    \]

Clearly $\|T(t)\|_{L(X)} \asymp 1 + t$.
But one verifies
	\[ \sigma_{\PF}(A, x) \subseteq\sigma_{\PF}(A)\subseteq \sigma(A)\cap i\R = \varnothing , \qquad x \in X . \]
Let $y \in c_0$ be defined as $y_{2n-1} = 0$ and $y_{2n}=1/n^2$ for $n \in \N$. 
Then, $\|T(t) y\|_X \to e^{-1}$ as $t \to \infty$.
Therefore, $\mathcal{T}$ and the orbit $T(t) y$ are not stable. 
Note that this also shows that even under the extra assumption that the orbit is bounded, the condition $\sigma_{\PF}(A,x)=\varnothing$ is not sufficient to conclude stability.
\end{example}

\section{Characterizations of strong stability}
\label{sec:StrongStabilityChar}
We now turn our attention to strongly stable semigroups. Recall that a $C_0$-semigroup is \emph{strongly stable} if all of its orbits are stable. Notice that strongly stable semigroups are automatically bounded, as follows from the Banach-Steinhaus theorem. 

We now prove the following refinement of Theorem \ref{t:StrongStabCharIntro} stated at the Introduction, which improves upon the ABLV theorem \ref{t:ABLV} and also characterizes strong stability in terms of knowledge of the potential size of $\sigma_{\PF}(A)$ combined with local range conditions.

\begin{theorem}\label{Mainresult}
Let $\mathcal{T}$ be a bounded $C_0$-semigroup. The following statements are equivalent:
	\begin{itemize}
		\item[$(a)$] $\mathcal{T}$ is strongly stable; 
        \item[$(b)$] $\sigma_{\PF}(A,M) = \varnothing$ for some/any dense subset $M \subseteq X$;
         \item[$(c)$] $\bigcup_{x \in M} \sigma_{\PF}(A,x) = \varnothing$ for some dense subset $M \subseteq X$;
		\item[$(d)$] $\sigma_R(A) \cap i \R = \varnothing$ and $\sigma_{\PF}(A, x)$ is countable for any $x$ in some dense subset $M \subseteq X$. 
		\item[$(e)$] $\sigma_{\PF}(A)\subseteq i E$ for some 
		 null set $E$ such that, for every compact subset $K \subset \R$,
			\[ \bigcap_{\xi \in E \cap K} \rg(i \xi - A) \text{ is dense in } X ; \]
		\item[$(f)$] 
		for some dense subset $M \subseteq D(A)$, we have $\bigcup_{x \in M} \sigma_{\PF}(A, x) \subseteq iE$, where $E $ is a  
		null set satisfying: For every $\xi_0 \in E$, there exists $\varepsilon = \varepsilon_{\xi_0} > 0$ such that
			\[ \bigcap_{\xi \in E, |\xi - \xi_0| \leq \varepsilon} (i \xi - A)(M) \text{ is dense in } X . \]
	\end{itemize}
\end{theorem}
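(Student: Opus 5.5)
We plan to organize the proof around the pivotal equivalence (a) $\Leftrightarrow$ (b), and then attach the remaining statements to it by a cycle of implications. For (a) $\Rightarrow$ (b), we apply Theorem \ref{t:StableOrbitCountable}: every orbit is stable, so $\sigma_{\PF}(A,x)=\varnothing$ for every $x\in X$. By \eqref{eq:UnionIsFullPseudoSpectrum} this gives $\sigma_{\PF}(A)=\varnothing$, and hence $\sigma_{\PF}(A,M)=\varnothing$ for any $M$. Conversely, if $\sigma_{\PF}(A,M)=\varnothing$ for some dense $M$, then Proposition \ref{PFClosure} yields $\sigma_{\PF}(A)=\sigma_{\PF}(A,\overline M)=\varnothing$. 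Theorem \ref{t:StableOrbitCountable} then gives stability of every orbit, which is (a). The implication (b) $\Rightarrow$ (c) is immediate from Remark \ref{r:InclusionsSpectra}$(iv)$.

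For (c) $\Rightarrow$ (a), we argue that each orbit $T(t)x$ with $x\in M$ is stable by Theorem \ref{t:StableOrbitCountable}. Boundedness of $\mathcal T$ together with density of $M$ then gives strong stability by a $3\varepsilon$-argument. The same argument closes (d) $\Rightarrow$ (a): since $\sigma_R(A)\cap i\R=\varnothing$, we have $X=\overline{\rg(i\xi-A)}$ for every real $\xi$, so condition (c) of Theorem \ref{t:StableOrbitCountable} holds for each $x\in M$. Conversely, (a) $\Rightarrow$ (d) follows because stable orbits force $\lim_{\alpha\to0^+}\alpha R(\alpha+i\xi,A)x=0$ for all $x$ (e.g.\ by (a)$\Rightarrow$(d) of Theorem \ref{t:StableOrbitCountable}, since then $\sigma_{\PF}(A,x)=\varnothing$ and $x\in\overline{\rg(i\xi-A)}$ by Lemma \ref{l:Ergodic} via dominated convergence in the Laplace integral). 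Thus every range is dense and $\sigma_R(A)\cap i\R=\varnothing$.

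For (a) $\Rightarrow$ (e) and (a) $\Rightarrow$ (f), we take $E=\varnothing$, since $\sigma_{\PF}(A)=\varnothing$. In (f) we additionally choose $M=D(A)$; the intersection over an empty index set is $X$, read with the convention that this is dense. For (e) $\Rightarrow$ (a), the plan is to use Theorem \ref{t:SuffCondVanishSingSupp} locally. Fix a bounded open interval $I$ and a compact $K\supset\overline I$, and set $M_K=\bigcap_{\xi\in E\cap K}\rg(i\xi-A)$, which is dense by hypothesis. Take $x\in M_K$ and $\tau(t)=T(t)x$ (extended by $0$ for $t<0$). Then $I\cap\singsupp_{\PF}\widehat\tau\subseteq E\cap I$. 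For each $\xi_0\in E\cap I$ we write $x=(i\xi_0-A)y$, and the computation in the proof of Lemma \ref{LocalVersionIsolatedPoints} gives the bound \eqref{BoundFourierTransform2}. Theorem \ref{t:SuffCondVanishSingSupp} therefore yields $iI\cap\sigma_{\PF}(A,x)=\varnothing$ for all $x\in M_K$. By Proposition \ref{PFClosure}, $iI\cap\sigma_{\PF}(A)=iI\cap\sigma_{\PF}(A,\overline{M_K})=\varnothing$. Since $I$ is arbitrary, $\sigma_{\PF}(A)=\varnothing$, and (b) $\Rightarrow$ (a) finishes.

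For (f) $\Rightarrow$ (a), the step we expect to be the main obstacle is handling a vector $z=(i\xi-A)x$ with $x\in M$ but a mere range condition near each $\xi_0$ rather than globally. Fix $\xi_0\in E$ with its $\varepsilon$, set $I=(\xi_0-\varepsilon,\xi_0+\varepsilon)$, and let $N=\bigcap_{\xi\in E,|\xi-\xi_0|\le\varepsilon}(i\xi-A)(M)$, which is dense. Any $z\in N$ satisfies $\sigma_{\PF}(A,z)\subseteq iE$ by Corollary \ref{c:SpectrumOf(A-s)}, since $z=(i\xi-A)x$ with $x\in M$. For each $\xi\in E\cap I$ we have $z\in\rg(i\xi-A)$, so the integral bound holds at $\xi$, and Theorem \ref{t:SuffCondVanishSingSupp} on $I$ gives $iI\cap\sigma_{\PF}(A,z)=\varnothing$. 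Proposition \ref{PFClosure} then gives $iI\cap\sigma_{\PF}(A)=\varnothing$. Points $\xi_0\notin E$ are not in $\sigma_{\PF}(A)$ either: here we use $\sigma_{\PF}(A)=\sigma_{\PF}(A,M)$ (Proposition \ref{PFClosure}) and the fact that $\sigma_{\PF}(A,M)$ is the closure of $\bigcup_{x\in M}\sigma_{\PF}(A,x)\subseteq iE$. A point of $\overline{iE}\setminus iE$ is the delicate case, since a neighbourhood of it may contain points of $E$. However, we have already shown that $\sigma_{\PF}(A)\cap iE=\varnothing$, so $\sigma_{\PF}(A)$ is a closed subset of $\overline{iE}$ containing no point of $iE$. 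We intend to conclude by noting that each point of $\sigma_{\PF}(A)$ lies in the closure of $\bigcup_{x\in M}\sigma_{\PF}(A,x)\subseteq iE$, while the open set $\bigcup_{\xi_0\in E} iI_{\xi_0}$ covers $iE$ and misses $\sigma_{\PF}(A)$. If this closure step fails, we would instead refine the argument via the Baire-type Proposition \ref{l:LocalPseudofunctionSpectrumIsUnion}. Either way we reach $\sigma_{\PF}(A)=\varnothing$, and (a) follows.
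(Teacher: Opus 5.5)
Your proposal is correct and follows essentially the same route as the paper: (a)--(d) go through Theorem~\ref{t:StableOrbitCountable}, Proposition~\ref{PFClosure} and the ergodic/residual-spectrum argument, and the range conditions are handled by applying Theorem~\ref{t:SuffCondVanishSingSupp} on each $I_{\xi_0}$ to vectors $z=(i\eta-A)y$, followed by Proposition~\ref{PFClosure} to get $\sigma_{\PF}(A)\cap iE=\varnothing$. The closure step you hedge on cannot fail, because every $x\in M$ satisfies $\sigma_{\PF}(A,x)\subseteq iE\cap\sigma_{\PF}(A)=\varnothing$; this gives (c) at once and is exactly how the paper finishes, using the dense sets $M_{\xi_0}$ in place of $M$.
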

\begin{proof} Due to the boundedness hypothesis, the semigroup is stable if and only if its orbits $T(t)x$ are stable for every $x$ in a dense subset of $X$. Note also that if the semigroup is stable, one must necessarily have $\sigma_R(A) \cap i \R = \varnothing$, as shown in \cite[Proposition 2.1]{A-B-TaubThStabSemigr}.
Theorem \ref{t:StableOrbitCountable}, \cite[Proposition 2.1]{A-B-TaubThStabSemigr}, and \eqref{eq:InclusionChainSpectra} thus yield the equivalences     $(a) \Longleftrightarrow (b)\Longleftrightarrow (c)\Longleftrightarrow (d)$. The implication $(b) \Longrightarrow (e)$ follows from Proposition \ref{PFClosure},  while $(e) \Longrightarrow (f)$ is trivial.

To complete the proof of the theorem, we will show that $(f) \Longrightarrow (c)$. So assume that the property $(f)$ holds for some fixed dense subset $M\subseteq D(A)$.
Since $E$ is a null set, reducing the size of  $\varepsilon_{\xi_0}$ if necessary, we may assume that $\xi_0 \pm \varepsilon_{\xi_0} \notin E$ for any $\xi_0 \in E$. We may also obviously assume that $E\neq \varnothing$, as otherwise there would be nothing to show.
Put $I_{\xi_0} = (\xi_0 - \varepsilon_{\xi_0}, \xi_0 + \varepsilon_{\xi_0})$ for any $\xi_0 \in E$.
We set
	\[ M_{\xi_0} := \bigcap_{{\xi \in E \cap I_{\xi_0}}} (i\xi - A)(M) , \qquad \xi_0 \in E , \]
which are dense in $X$ by assumption. Employing Corollary \ref{c:SpectrumOf(A-s)}, we deduce that
\begin{equation}
\label{eq1:proofSSS}
\bigcup_{x \in M_{\xi_0}} \sigma_{\PF}(A, x) \subseteq iE.
\end{equation}

We claim that 
	\begin{equation}
		\label{eq:UbetaYbetaClaim}
		i I_{\xi_0} \cap \sigma_{\PF}(A, M_{\xi_0}) = \varnothing , \qquad \forall \xi_0 \in E .
	\end{equation}
Fix $\xi_0\in E$ and $x\in M_{\xi_0}$. 
We need to prove that $R(i\xi,A)x\in \PFloc(I_{\xi_0};X)$. 
For any $\eta \in I_{\xi_0} \cap E$, by writing $x = (i \eta - A)y$ for some $y \in M$, and proceeding exactly as in the proof of Lemma \ref{LocalVersionIsolatedPoints}, we find
 	\begin{equation*} 
		\int_0^t e^{- i \eta s}T(s) x \, \mathrm{d}s
		= y - e^{- i \eta t}T(t) y=O(1) .
	\end{equation*}
Since the inclusion relation \eqref{eq1:proofSSS} yields $\singsupp_{\PF} R(i \, \cdot, A) x \subseteq E$, Theorem \ref{t:SuffCondVanishSingSupp} shows $i I_{\xi_0} \cap \sigma_{\PF}(A, x) = \varnothing $, and the claim \eqref{eq:UbetaYbetaClaim} follows.

Proposition \ref{PFClosure} and \eqref{eq:UbetaYbetaClaim} now yield $\sigma_{PF}(A)\cap i E=\varnothing$. Combining the latter with \eqref{eq1:proofSSS}, we have verified that property $(c)$ holds with any of the dense subspaces $M_{\xi_{0}}$.

\end{proof}

As already mentioned at the Introduction, Arendt and Batty argued in \cite[Example 2.5(a)]{A-B-TaubThStabSemigr} that the ABLV theorem \ref{t:ABLV} was optimal in the sense that for any closed uncountable set $E \subseteq \R$ there exists a bounded non strongly stable $C_0$-semigroup $\mathcal{T}$ for which $\sigma_R(A) \cap i \R = \varnothing$ and $\sigma(A) \cap i \R \subseteq i E$. 
To illustrate that their example is not in contradiction with Theorem \ref{Mainresult} and Corollary \ref{c: ss}, we demonstrate that condition Theorem \ref{Mainresult}(e) is actually not met in that particular instance. The key point in our extensions from Theorem \ref{Mainresult} for null sets is that we found an adequate range condition for removing the local pseudofunction singularities of the resolvent from the imaginary axis.

\begin{example}\label{ExampleConditionMainResult} 
Let $X = L^2(\Omega, \mu)$ and $\mathcal{T}$ be as in Example \ref{ex:UnionNotPseudoSpectrum}, where we additionally assume that the closed set $\Omega$ is uncountable and $\mu$ is non-atomic.
Since it is an isometry semigroup, it is not strongly stable. Let $E \subseteq \Omega$ be a closed subset.

Suppose that, for every compact subset $K \subseteq \R$, we have
	\[ X_K := \bigcap_{\xi \in E \cap K} \rg(i \xi - A) \text{ is dense in } L^2(\Omega, \mu) . \]
Then $E$ is necessarily a $\mu$-null set.
Indeed, otherwise, the set $E'$ of all non-isolated points of $E$ is not a $\mu$-null set, as $\mu$ is non-atomic.
But then also $\mu(E' \cap K) > 0$ for some compact $K \subseteq \R$, and we may assume $E' \cap K$ does not have isolated points.
For any $f$ beloning to $X_K$ and for each $\xi \in E \cap K$, there exists a $g_\xi \in D(A)$ such that $f(u) = (i \xi - u) g_\xi(u)$.
This leads to $\text{ess}\supp(f) \cap (E' \cap K) = \varnothing$.
But then $X_K$ cannot possibly be dense in $L^2(\Omega, \mu)$.
Therefore, we see that the property $(e)$ from Theorem \ref{Mainresult} can only hold if $\mu(E) = 0$.

On the other hand, if $\sigma_{\PF}(A, f) \subseteq i E$ for each $f \in L^2(\Omega, \mu)$ (or even a dense subset), then, by \eqref{eq:PFSpec=EssSupp}, it must necessarily hold that $\mu(\Omega \setminus E) = 0$.
In particular, $E$ cannot be a $\mu$-null set.
\end{example}

\begin{remark}
\label{a practical remark} In practice, a potential candidate to play the role of the null set from Theorem \ref{Mainresult} might be $E=\sigma(A)\cap i\R$. This leads to Corollary \ref{c: ss}, or yet a more localized version where one can replace \eqref{eq: local range ibs} by: for each $i\xi_0\in \sigma(A)\cap i\R$ there is an open interval $\xi_0\in I_{\xi_0}$ such that 
\[		 \bigcap_{\lambda \in iI_{\xi_0}\cap\,\sigma(A)} \rg(\lambda - A) \text{ is dense in } X. \]		 
\end{remark}

\section{Semi-uniform stability and the uniform pseudofunction spectrum}
\label{sec:SemiUniform}

Up to this point, we have exclusively considered the boundary behavior of $R(\lambda, A)x$ for fixed $x \in X$.
However, the resolvent of a bounded $C_0$-semigroup has itself $L(X)$-valued distributional boundary values on the whole imaginary line. It is then also natural to introduce the following definition.

\begin{definition}
\label{def:UniformPseudofuncSpec}
The \emph{uniform pseudofunction spectrum $\sigma_{\uPF}(A)$} of a bounded $C_0$-semigroup $\mathcal{T}$ is given by
	\[ \sigma_{\uPF}(A) = i \singsupp_{\PF} R(i \, \cdot, A) . \]
\end{definition}

It is immediate that $\sigma_{\uPF}(A) \subseteq \sigma(A) \cap i \R$. The main result of this section is that one has, in fact, equality.

\begin{proposition}\label{SpectraCoincide}
	Let $\mathcal{T}$ be a bounded $C_0$-semigroup. It holds that 
	\[\sigma_{\uPF}(A)=\sigma(A)\cap i\R.\]
\end{proposition}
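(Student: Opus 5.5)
We must show $\sigma(A)\cap i\R\subseteq\sigma_{\uPF}(A)$; the reverse inclusion is immediate. Take $\xi_0\in\R$ with $i\xi_0\notin\sigma_{\uPF}(A)$, so there is an open interval $I\ni\xi_0$ with $R(i\,\cdot,A)\in\PFloc(I;L(X))$. The goal is to show $i\xi_0\in\rho(A)$, i.e. that $R(\lambda,A)$ stays bounded as $\lambda=\alpha+i\xi_0\to i\xi_0$. It suffices to find, for some $\delta>0$, a uniform bound $\|R(\alpha+i\xi,A)\|\le C$ for $\alpha>0$ and $|\xi-\xi_0|<\delta$. Then the Neumann series $R(\mu,A)=\sum_n(\lambda-\mu)^nR(\lambda,A)^{n+1}$ converges for $|\mu-\lambda|<1/C$, which puts $i\xi_0$ into $\rho(A)$.

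\textbf{Step 1 (localization).} Choose $\varphi\in\D(I)$ with $\varphi\equiv1$ near $\xi_0$. Set
\[
\Phi(t)=\frac{1}{2\pi}\ev{R(i\xi,A)}{e^{it\xi}\varphi(\xi)}=\frac{1}{2\pi}\int_0^\infty\widehat\varphi(s-t)T(s)\,\mathrm ds\in L(X).
\]
Because the boundary value is a local $L(X)$-valued pseudofunction, \eqref{eq:LocalPseudoTestFunc} gives $\|\Phi(t)\|_{L(X)}\to0$ as $|t|\to\infty$, and $\Phi$ is norm-continuous. So $\Phi\in C_0(\R;L(X))$ and its Fourier transform is the global pseudofunction $\varphi(\xi)R(i\xi,A)$ (up to a reflection). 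Near $\xi_0$ the boundary distribution therefore equals the Fourier transform of a $C_0$ operator-valued function. That alone does not give boundedness, so the semigroup structure has to enter.

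\textbf{Step 2 (use the semigroup/resolvent structure).} I would compare $\Phi$ with $T$. The resolvent identity $R(\lambda,A)-R(\mu,A)=(\mu-\lambda)R(\lambda,A)R(\mu,A)$, or equivalently the commutation $T(\tau)\Phi(t)=\Phi(t)T(\tau)$ together with the convolution structure, should yield
\[
\Phi(t)\approx T(t)\,\Psi \quad\text{for large } t,
\]
where $\Psi=\frac{1}{2\pi}\int_{\R}\widehat\varphi(s)T_{\text{ext}}(s)\,\mathrm ds$ is a spectral-type localization operator. More concretely, let $P_t$ be the operator $\int_0^\infty\widehat\varphi(s)T(s+t)\,\mathrm ds$. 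This satisfies $P_t=T(t)P_0$ and $\|\Phi(t)-P_t\|\to0$ as $t\to\infty$, because $\widehat\varphi$ is Schwartz and the semigroup is bounded. Hence $\|T(t)P_0\|\to0$.

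\textbf{Step 3 (conclude).} From $\|T(t)P_0\|_{L(X)}\to0$ we get that the orbits of the bounded semigroup restricted to $\overline{\rg P_0}$ decay uniformly. So the restricted semigroup on the closed invariant subspace $Y=\overline{\rg P_0}$ is uniformly stable, and $\sigma(A|_Y)\cap i\R=\varnothing$. Now let $x_n\in D(A)$ be approximate eigenvectors, $\|x_n\|=1$ and $(i\xi_0-A)x_n\to0$. Since $\widehat\varphi$ is the Fourier transform of a function equal to $1$ near $\xi_0$, we have $P_0x_n-c\,x_n\to0$ with $c=\varphi(\xi_0)\cdot2\pi\ne0$ (using $T(s)x_n\approx e^{i\xi_0 s}x_n$ uniformly on compacts, with Schwartz tails). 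So approximate eigenvectors essentially lie in $Y$. This contradicts the invertibility of $i\xi_0-A|_Y$, giving $i\xi_0\notin\sigma_{\mathrm{ap}}(A)$. Since $\partial\sigma(A)\subseteq\sigma_{\mathrm{ap}}(A)$ and $i\R\cap\sigma(A)$ lies on the boundary of $\sigma(A)$ (because $\C_+\subseteq\rho(A)$), we conclude $i\xi_0\in\rho(A)$.

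The main obstacle is making Step 2 precise and uniform in operator norm, i.e. estimating $\Phi(t)-T(t)P_0$ in $L(X)$ via the tail $\int_{-\infty}^{-t}|\widehat\varphi|$. The approximate-eigenvector transfer in Step 3 also needs care with $\|x_n\|$ normalization.
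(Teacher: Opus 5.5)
Your Step 2 is false, not just imprecise, and the argument breaks there. Note that $P_0=\int_0^\infty\widehat\varphi(s)T(s)\,\mathrm{d}s$ is just $2\pi\Phi(0)=\ev{R(i\xi,A)}{\varphi(\xi)}$. Writing out the difference gives
\[
2\pi\Phi(t)-T(t)P_0=\int_0^t\widehat\varphi(s-t)T(s)\,\mathrm{d}s=\int_{-t}^{0}\widehat\varphi(u)\,T(t+u)\,\mathrm{d}u .
\]
This integral uses $\widehat\varphi$ on the whole interval $[-t,0]$, not on a tail. The Schwartz decay only controls $\int_{-\infty}^{-t}|\widehat\varphi|$, and that quantity governs $\Phi(t+r)-T(r)\Phi(t)$, not the difference above.

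In fact the conclusion $\|T(t)P_0\|_{L(X)}\to0$ is itself false. Take $X=\C$, $T(t)=e^{i\eta t}$ with $\eta\notin \overline{I}$, and $\varphi\geq 0$. Then $R(i\xi,A)=(i(\xi-\eta))^{-1}$ is smooth on $I$, so $i\xi_0\notin\sigma_{\uPF}(A)$. Yet $|T(t)P_0|=|P_0|=\bigl|\int\varphi(\xi)(\xi-\eta)^{-1}\,\mathrm{d}\xi\bigr|$ is a nonzero constant. Because $\mathcal{T}$ is only a semigroup, $P_0$ carries a half-line truncation and does not localize in frequency.

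Step 3 has further problems:
\begin{itemize}
\item The same truncation issue affects your constant: $\int_0^\infty\widehat\varphi(s)e^{i\xi_0 s}\,\mathrm{d}s$ is not $2\pi\varphi(\xi_0)$, since Fourier inversion needs the full line.
\item Even granting $\|T(t)P_0\|\to0$, you would only control $T(t)$ uniformly on $P_0(B_X)$, not on the unit ball of $Y=\overline{\rg P_0}$. So uniform stability of $\mathcal{T}|_Y$ does not follow.
\item The reduction you announce at the start, a uniform bound on $\|R(\alpha+i\xi,A)\|$, is never established and is not needed.
\end{itemize}

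The skeleton of Step 3 can be rescued by dropping $P_0$ and testing $\Phi(t)$ itself, which Step 1 correctly shows tends to $0$ in $L(X)$, on approximate eigenvectors. Let $\|x_n\|_X=1$ and $(i\xi_0-A)x_n\to0$, and put $M=\sup_{s\ge0}\|T(s)\|$. Then $\|T(s)x_n-e^{i\xi_0 s}x_n\|_X\le Ms\|(i\xi_0-A)x_n\|_X$, so for each fixed $t$
\[
\Phi(t)x_n-c(t)\,x_n\to0 \quad (n\to\infty),\qquad c(t)=\frac{e^{i\xi_0 t}}{2\pi}\int_{-t}^{\infty}\widehat\varphi(u)e^{i\xi_0u}\,\mathrm{d}u .
\]
Since $|c(t)|\to|\varphi(\xi_0)|=1$ as $t\to\infty$, this contradicts $\|\Phi(t)\|_{L(X)}\to0$. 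Combined with $\sigma(A)\cap i\R\subseteq\partial\sigma(A)\subseteq\sigma_{\mathrm{ap}}(A)$, this gives $i\xi_0\in\rho(A)$.

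That repaired argument would be a genuinely more elementary alternative to the paper's proof. The paper instead proves a lemma that local pseudofunction boundary behavior forces $R(\alpha+i\xi_0,A)=o(1/\alpha)$, via the harmonic function $U$ built from $g\in C_0(\R;L(X))$ and a distributional reflection principle. It then picks $\alpha$ with $\alpha\|R(\alpha+i\xi_0,A)\|<1$ and concludes by the Neumann series. As written, however, your proof rests on the false approximation in Step 2.
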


To prove this proposition, we need the following lemma.

\begin{lemma}
	Let $Z$ be a Banach space and let $I \subseteq \R$ be an open interval. 
	Let $F : (0,\alpha_0) + iI \to Z$ be holomorphic. 
	If $F$ has local pseudofunction boundary behavior on $iI$, then, for each compact subset $K\subseteq I$,
		\begin{equation}
			\label{eq:LocalPseudoBoundBehImpliesSmallBlowup} 
			F(\alpha+ i \xi) =o\left( \frac{1}{\alpha} \right), \qquad \alpha \to 0^+ ,  
		\end{equation}
uniformly for $ \xi \in K$. 
\end{lemma}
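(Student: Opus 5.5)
Fix a compact $K\subset I$, a slightly larger compact $K'\subset I$, and $\varphi\in\D(I)$ with $\varphi\equiv1$ on a neighborhood of $K'$. The plan is to write $F$ near $iK$ as a Cauchy (Poisson-type) integral of its boundary distribution, and to read off the $o(1/\alpha)$ bound from the fact that this boundary distribution is a local pseudofunction.

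First decompose $F(\lambda)=G(\lambda)+H(\lambda)$, where $G$ is the Cauchy integral of $f\varphi$, with $f\in\PFloc(I;Z)$ the boundary value of $F$:
\[
G(\lambda)=\frac{1}{2\pi}\Big\langle f(\xi)\varphi(\xi),\frac{1}{\lambda-i\xi}\Big\rangle .
\]
Since $f\varphi$ has compact support, $G$ is holomorphic off $i\,\supp\varphi$. Its boundary values from the right, minus those from the left, give back $f\varphi$ by the Sokhotski--Plemelj formula. The difference $H=F-G$ is holomorphic in $(0,\alpha_0)+iI$. Its boundary distribution on the interior of $\{\varphi=1\}$ coincides with the boundary value of $-G$ from the left, and $-G$ is holomorphic there. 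By the edge-of-the-wedge theorem (the distributional version, or equivalently the fact that boundary values determine the function), $H$ extends holomorphically across a neighborhood of $iK'$. Hence $H$ is bounded near $iK$, and it suffices to prove \eqref{eq:LocalPseudoBoundBehImpliesSmallBlowup} for $G$.

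For $G$, use that $f\varphi$ is a global pseudofunction: $f\varphi=\widehat{g}$ for some $g\in C_0(\R;Z)$, with $g$ tempered. Writing $\frac{1}{\lambda-i\xi}=\int_0^\infty e^{-(\lambda-i\xi)t}\,\mathrm{d}t$ for $\Real\lambda>0$ and using Fourier inversion, one should obtain, up to normalization,
\[
G(\alpha+i\eta)=\int_0^\infty e^{-\alpha t}e^{-i\eta t}g(-t)\,\mathrm{d}t,
\]
that is, the Laplace transform of $t\mapsto g(-t)$ on $\R_+$. Justifying this identity is routine: pair with test functions, or regularize $1/(\lambda-i\xi)$. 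Then, given $\varepsilon>0$, choose $T$ with $\|g(-t)\|_Z\le\varepsilon$ for $t\ge T$. This gives
\[
\|G(\alpha+i\eta)\|_Z\le T\sup\|g\|_Z+\frac{\varepsilon}{\alpha},
\]
so $\alpha\|G(\alpha+i\eta)\|_Z\le \alpha T\|g\|_\infty+\varepsilon$, uniformly in $\eta\in\R$. Letting $\alpha\to0^+$ and then $\varepsilon\to0$ yields $G(\alpha+i\eta)=o(1/\alpha)$ uniformly, which completes the argument.

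The main obstacle is the first step: showing rigorously that $H$ extends holomorphically across $iK'$ in the vector-valued setting. I would handle it by applying functionals $z'\in Z'$ to reduce to the scalar edge-of-the-wedge / Painlevé-type theorem for distributional boundary values. Then I would recover a $Z$-valued holomorphic extension via weak holomorphy together with local boundedness, which follows from the Cauchy integral formula on a contour surrounding $iK'$ built from the two one-sided holomorphic functions. Alternatively, one can avoid edge-of-the-wedge entirely by representing $F$ near $iK$ directly through a Cauchy formula on a rectangle, using the boundary-value convergence \eqref{eq:BoundaryBehavior} on the vertical side $\Real\lambda=\delta\to0$.
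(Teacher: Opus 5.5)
Your proof is correct and has the same architecture as the paper's. The local pseudofunction hypothesis is used only to write the boundary distribution near $K$ as $\widehat g$ with $g\in C_0(\R;Z)$. One then subtracts an explicit transform of $g$, shows the remainder continues across the imaginary axis (so is bounded near $iK$), and bounds the subtracted piece by $T\|g\|_\infty+\varepsilon/\alpha$.

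The difference lies in the correction term and the continuation theorem. The paper subtracts the harmonic, Poisson-type function
\[
U(\lambda)=\int_{-\infty}^0 e^{\overline{\lambda}t}g(t)\,\mathrm{d}t+\int_0^\infty e^{-\lambda t}g(t)\,\mathrm{d}t ,
\]
whose boundary value from the right is exactly $\widehat g=f$ on $I'$. So $F-U$ has vanishing boundary distribution there, and a vector-valued distributional reflection principle for harmonic functions continues it across $iI'$, without looking at the left half-plane.

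You subtract the Cauchy integral $G$ of $f\varphi$. On $\C_+$ this is the Laplace transform of $g$ restricted to $[0,\infty)$; with the paper's Fourier convention it is $g(t)$, not $g(-t)$, which is immaterial. Its right boundary value is $\mathcal{F}(g\chi_{[0,\infty)})$ rather than $f$. You therefore need the Plemelj jump relation and a two-sided gluing (one-variable edge-of-the-wedge) of $H$ with $-G$.

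What your route buys is that everything stays holomorphic, and the gluing step is elementary and works verbatim for $Z$-valued functions. Take $(N+1)$-fold primitives of $H$ and $-G$; they are continuous up to the axis thanks to the $O(\alpha^{-N})$ and $O(1/|\alpha|)$ bounds. Since their $(N+1)$-st derivatives have equal boundary distributions, their boundary values differ by a $Z$-valued polynomial, which you absorb. Morera then finishes.

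This is tidier than the route via functionals that you sketch. There, local boundedness near the axis has to come from the uniform boundedness principle, using that each scalar continuation is bounded on compacta. It cannot come from a Cauchy integral, since any contour around a point of the axis crosses it.
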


\begin{proof}
	Let $f(\xi)=F(i\xi)\in \D'(I,Z)$ be the distributional boundary value of $F$ on $i I$. 
	Fix some compact $K \subseteq I$.
	There exists an open set $I' \subseteq I$ containing $K$ such that $f = \widehat{g}$ on $I'$ for some $g \in C_{0}(\R; Z)$. 
	Define the $Z$-valued function 
	\[
		U(\lambda)=\int_{-\infty}^0 e^{\overline{\lambda}t} g(t) \, \mathrm{d}t + \int^{\infty}_0 e^{-\lambda t} g(t) \, \mathrm{d}t
		 , \qquad \lambda \in \C_+ . 
	\]
	Then $U$ is harmonic on $\C_+$.
	A small calculation using $g(t)=o(1)$ shows
	\begin{equation}
		\label{FunctUislittleO}
		\lim_{\alpha \to 0+} \alpha \cdot \sup_{\xi \in\R} \|U(\alpha+i\xi)\|_Z = 0 .
	\end{equation}
One has (see for example \cite{B-DistrComplVarFourTrans})
		\[ \widehat{g}(\xi) = \lim_{\alpha \to 0^+} U(\alpha + i\xi)  \]
	in $\D^\prime(I;Z)$.
	Applying a $Z$-valued version of the distributional reflection principle for harmonic functions \cite{B-DistrComplVarFourTrans,R-EdgeWedge}, we conclude that $F - U$ has harmonic extension to a complex neighborhood of $i I'$. In particular, $F(\lambda)=U(\lambda)+O(1)$ on $\lambda\in (0,\alpha_0] + iK$. Then \eqref{eq:LocalPseudoBoundBehImpliesSmallBlowup} follows from \eqref{FunctUislittleO}.
\end{proof}

\begin{proof}[Proof of Proposition \ref{SpectraCoincide}]
Suppose $i \xi_0 \notin \sigma_{\uPF}(A)$.
Considering a rescaled semigroup, we may assume $\xi_0 = 0$.
Then $R(i \xi, A) \in \PFloc(I; Z)$ for some open interval $I$ containing $0$.
Using \eqref{eq:LocalPseudoBoundBehImpliesSmallBlowup} for $F(\lambda) = R(\lambda, A)$, we obtain $\alpha \|R(\alpha, A)\|_{L(X)} < 1$ for certain small enough $\alpha > 0$.
But then $0 \in \rho(A)$, as follows from the series representation of the resolvent.
We have thus shown $\sigma(A) \cap i \R \subseteq \sigma_{\uPF}(A)$, which completes the proof of the proposition.
\end{proof}

Observe that $T(t)$ is not necessarily (uniformly) continuous as an $L(X)$-valued function. 
Theorem \ref{MainResultGregoryJasson} is therefore generally not applicable to this function.
Consequently, we cannot directly conclude uniform asymptotic behavior for the semigroup $\mathcal{T}$ from its uniform pseudofunction spectrum $\sigma_{\uPF}(A)$.
However, we may consider semi-uniform stability.

Recall \cite{B-D-NonUniformStab,C-S-T-SemiUniformStabOpSemigroupEnergyDecay} a bounded $C_0$-semigroup $\mathcal{T}$ is called \emph{semi-uniformly stable} if $0 \notin \sigma(A)$ and
	\[ \lim_{t \to \infty} \| T(t) A^{-1} \|_{L(X)} = 0 . \]
As $D(A)$ is dense and $\mathcal{T}$ is bounded, this in particular implies that $\mathcal{T}$ is strongly stable.

Applying Proposition \ref{SpectraCoincide} and then Lemma \ref{l:SingSuppEmpty<=>o(1)} to the $L(X)$-valued Lipschitz continuous function $\tau(t)=T(t)A^{-1}$, one recovers the following characterization of semi-uniform stability that goes back to \cite{A-B-TaubThStabSemigr} (cf. \cite[Theorem 1.1]{B-D-NonUniformStab}).

\begin{corollary}
	\label{t:OurSemiUniformSpecChar}
	A bounded $C_0$-semigroup $\mathcal{T}$ is semi-uniformly stable if and only if 
	\[\sigma(A)\cap i\R =  \sigma_{\uPF}(A) = \varnothing.\]
\end{corollary}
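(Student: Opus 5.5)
We prove both implications of Corollary \ref{t:OurSemiUniformSpecChar} with Proposition \ref{SpectraCoincide} and Lemma \ref{l:SingSuppEmpty<=>o(1)}. Since Proposition \ref{SpectraCoincide} already gives $\sigma_{\uPF}(A)=\sigma(A)\cap i\R$, it only remains to show that semi-uniform stability is equivalent to $\sigma(A)\cap i\R=\varnothing$.

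\emph{Sufficiency.} Suppose $\sigma(A)\cap i\R=\varnothing$. Then $0\in\rho(A)$, so $A^{-1}\in L(X)$. Set $\tau(t)=T(t)A^{-1}$ for $t\geq0$ and $\tau(t)=0$ for $t<0$, viewed as an $L(X)$-valued function.
\begin{itemize}
\item[(1)] $\tau$ is bounded and Lipschitz on $\R_+$. This follows from $T(t)A^{-1}-T(s)A^{-1}=\int_s^t T(r)\,\mathrm{d}r$ (since $T(r)AA^{-1}=T(r)$), which gives $\|\tau(t)-\tau(s)\|\leq (\sup_r\|T(r)\|)\,|t-s|$. In particular $\tau$ is norm continuous, hence Bochner measurable. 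The jump at $0$ is irrelevant for slow oscillation, which only concerns large $|t|$, so $\tau$ is slowly oscillating.
\item[(2)] By the resolvent integral formula, $\L\{\tau;\lambda\}=R(\lambda,A)A^{-1}$ for $\lambda\in\C_+$. Hence, by \eqref{eq:FourierTransformBoundaryLaplace}, $\widehat{\tau}$ is the boundary distribution of $R(\lambda,A)A^{-1}$.
\item[(3)] Since $\sigma(A)\cap i\R=\varnothing$, the function $R(\lambda,A)A^{-1}$ extends holomorphically across every point of $i\R$. Its boundary value is therefore smooth on $\R$, hence locally $L^1$, hence a local pseudofunction. So $\singsupp_{\PF}\widehat\tau=\varnothing$.
\item[(4)] Lemma \ref{l:SingSuppEmpty<=>o(1)}, applied with $Z=L(X)$, gives $\|T(t)A^{-1}\|\to0$. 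This is semi-uniform stability.
\end{itemize}

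An alternative route to step (3): Proposition \ref{SpectraCoincide} shows $R(i\cdot,A)\in\PFloc(\R;L(X))$. Since right multiplication by the fixed operator $A^{-1}$ commutes with the boundary limit and preserves the $C_0$ property of inverse Fourier transforms, $R(i\cdot,A)A^{-1}\in\PFloc(\R;L(X))$ as well.

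\emph{Necessity.} Suppose $\mathcal{T}$ is semi-uniformly stable. By definition $0\in\rho(A)$, so $\tau(t)=T(t)A^{-1}$ is again bounded and Lipschitz, and now $\tau(t)=o(1)$. Lemma \ref{l:SingSuppEmpty<=>o(1)} gives $\singsupp_{\PF}\widehat\tau=\varnothing$, so $R(i\xi,A)A^{-1}\in\PFloc(\R;L(X))$.

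We must now remove the factor $A^{-1}$. The resolvent identity gives
\[ R(\lambda,A)=A^{-1}\bigl(\lambda R(\lambda,A)-I\bigr), \]
equivalently $R(\lambda,A)=\lambda R(\lambda,A)A^{-1}-A^{-1}$, as operators (using $AR(\lambda,A)=\lambda R(\lambda,A)-I$ and the fact that $A^{-1}$ commutes with $R(\lambda,A)$). Passing to boundary values, $R(i\xi,A)=i\xi\,R(i\xi,A)A^{-1}-A^{-1}$ in $\D'(\R;L(X))$. The right-hand side is a local pseudofunction, because $\PFloc$ is a $C^\infty$-module and a constant lies in $L^1_{\loc}$. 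Hence $\sigma_{\uPF}(A)=\varnothing$, and Proposition \ref{SpectraCoincide} yields $\sigma(A)\cap i\R=\varnothing$. (Alternatively one can cite \cite{A-B-TaubThStabSemigr} directly for this direction.)

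\emph{Main obstacle.} The only delicate points are verifying the identity $\L\{T(\cdot)A^{-1};\lambda\}=R(\lambda,A)A^{-1}$ in operator norm (Bochner integrability) and justifying that the algebraic manipulations pass to distributional boundary values. Both follow from uniform-on-compacts convergence as $\alpha\to0^+$, using the bound \eqref{eq:resolvent bound}.
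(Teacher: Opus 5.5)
Your proof is correct and follows essentially the paper's route: Proposition \ref{SpectraCoincide} combined with Lemma \ref{l:SingSuppEmpty<=>o(1)}, applied to the Lipschitz $L(X)$-valued function $\tau(t)=T(t)A^{-1}$, whose Fourier transform is the boundary value of $R(\lambda,A)A^{-1}$. You also make explicit a step the paper leaves tacit. Namely, $R(i\cdot,A)A^{-1}$ and $R(i\cdot,A)$ are local pseudofunctions on the same sets: one direction is right multiplication by $A^{-1}$, the other is the identity $R(\lambda,A)=\lambda R(\lambda,A)A^{-1}-A^{-1}$.
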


\section{Tauberian theorems of Katznelson-Tzafriri type}
\label{sec:KatznelsonTzafriri}

For a bounded $C_0$-semigroup $\mathcal{T}$ and a function $f \in L^1(\R)$, we can define the following continuous linear operator on $X$,
	\[ \widehat{f}(T) = \int_{0}^\infty f(t) T(t) \, \mathrm{d}t  \]
where the integral is taken in the strong operator topology. 
Note that the linear map
	\[ L^1(\R) \to L(X) , \qquad f \mapsto \widehat{f}(T), \]
is continuous.
We are interested in the stability of $\mathcal{T}$ on the range of $\widehat{f}(T)$ when $f$ vanishes on $(-\infty,0)$.
In particular, we will show orbital/uniform stability if the spectrum\footnote{If $f\in\mathcal{S}'(\R)$, one often refers to $\operatorname*{spec}(f)=\supp \widehat{f}$ as its spectrum.} of $f$ sufficiently cancels the local/uniform pseudofunction spectrum of $\mathcal{T}$.

Observe
	\[ T(t) \widehat{f}(T) = \widehat{f(\cdot - t)}(T) , \qquad f \in L^1(\R_+), ~ t \geq 0 . \]
On the other hand, if $f = \widehat{\varphi}$ for some $\varphi \in \D(\R)$, we clearly have
	\[
		\label{eq:f(T)Resolvent} 
		\widehat{f}(T) x = \ev{R(i \xi, A) x}{\varphi(\xi)} , \qquad x \in X . 
	\]
Now, if $f \in L^1(\R_+)$ and if $(\varphi_n)_{n \in \N}$ is a sequence in $\D(\R)$ such that $\widehat{\varphi}_n \to f$ in $L^{1}(\R)$, by combining all previous observations, we see that
	\begin{equation}
		\label{eq:f(T)Resolvent}  
		T(t) \widehat{f}(T) x = \lim_{n \to \infty} \ev{R(i \xi, A) x}{e^{i t \xi} \varphi_n(\xi)} , 
	\end{equation}
uniformly in both $t \geq 0$ and $x$ in bounded subsets of $X$. This naturally connects to the notion of spectral synthesis.

\begin{definition}
\label{def:SpectralSynthesis}
Let $E \subseteq \R$ be a closed subset.
A function $f \in L^1(\R_+)$ is of \emph{spectral synthesis with respect to $E$} if there exists a sequence $(f_n)_{n \in \N}$ in $L^1(\R)$ such that each $\widehat{f}_n$ vanishes on a neighborhood of $E$ and $f_n \to f$ in $L^1(\R)$. 
\end{definition}

\begin{remark}
If a non-trivial $f \in L^1(\R_+)$ is of spectral synthesis with respect to some closed
 set $E$, then in particular $\widehat{f}(\xi) = 0$ for any $\xi \in E$.
The well-known theorem of F. Riesz and M. Riesz \cite{R-R-RandwerteAnalFunk} yields that $E$ must have Lebesgue measure zero. 
On the other hand, Malliavin \cite{M-ImposSynthSpec} has shown the existence of a closed null set $E$ and a non-trivial function $f \in L^1(\R_+)$ with $\widehat{f}$ vanishing on $E$ but $f$ not being of spectral synthesis with respect to $E$, see also \cite[Chapter VIII, Section 7]{K-IntroHarmAnal}. 
In fact, one can show there are closed null sets $E$ for which no non-trivial element of $L^1(\R_+)$ is of spectral synthesis with respect to it; see Proposition \ref{p:NoSpectralSynthesis} below.
\end{remark}

We note that the approximation sequence in the definition of spectral synthesis can always be taken from $\mathcal{F} (\D(\R))$. We leave the verification of the following simple lemma to the reader.

\begin{lemma}
\label{l:SpecSynthRegSeq}
If $f \in L^1(\R_+)$ is of spectral synthesis with respect to a closed set $E \subseteq \R$, then there exists a sequence $(\phi_n)_{n \in \N}$ in $\mathcal{F}(\D(\R))$ such that each $\widehat{\phi}_n$ vanishes on a neighborhood of $ E$ and $\phi_n \to f$ in $L^1(\R)$.
\end{lemma}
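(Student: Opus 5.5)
We start from the definition of spectral synthesis: there is a sequence $(f_n)$ in $L^1(\R)$ with each $\widehat{f}_n$ vanishing on an open neighborhood $U_n$ of $E$ and $f_n \to f$ in $L^1(\R)$. We will modify the $f_n$ in two steps, first making their Fourier transforms compactly supported and then making them smooth, while keeping the vanishing near $E$ and the $L^1$-convergence. The tool for both steps is the fact that $L^1(\R)$ is a Banach algebra under convolution with approximate identities whose Fourier transforms lie in $\D(\R)$.

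For the first step, fix $\psi \in \D(\R)$ with $\psi \equiv 1$ near $0$, and set $\psi_R(\xi) = \psi(\xi/R)$. Let $k_R = \mathcal{F}^{-1}\psi_R \in \mathcal{S}(\R)$. Then $k_R(t) = R\, k_1(Rt)$ is an approximate identity: its integral is $\psi(0)=1$ and $\|k_R\|_{L^1} = \|k_1\|_{L^1}$ is independent of $R$. Consequently $k_R \ast g \to g$ in $L^1$ as $R \to \infty$ for every $g \in L^1(\R)$. Choose $R_n$ with $\|k_{R_n} \ast f_n - f_n\|_{L^1} < 1/n$. The function $g_n := k_{R_n} \ast f_n$ has Fourier transform $\psi_{R_n}\widehat{f}_n$. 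This is compactly supported, still vanishes on $U_n$, and is continuous, but it need not be smooth.

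For the second step, take a mollifier $\rho \in \D(\R)$ supported in $[-1,1]$ with $\int \rho = 1$, and set $\rho_\varepsilon(\xi) = \varepsilon^{-1}\rho(\xi/\varepsilon)$. We define $\phi_n$ by $\widehat{\phi}_n = \widehat{g}_n \ast \rho_{\varepsilon_n}$, that is, $\phi_n = 2\pi\, g_n \cdot \mathcal{F}^{-1}\rho_{\varepsilon_n}$, with the constant fixed by our normalization of the Fourier transform. Then $\widehat{\phi}_n \in \D(\R)$. Moreover, $\widehat{\phi}_n$ vanishes on $\{\xi : \operatorname{dist}(\xi, \R\setminus U_n) > \varepsilon_n\}$. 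Since $E \cap \supp \widehat{g}_n$ is compact, this set is still a neighborhood of $E$ if $\varepsilon_n$ is small enough. (Near the part of $E$ lying outside $\supp \widehat g_n$ expanded by $\varepsilon_n$, $\widehat\phi_n$ vanishes anyway.)

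The key point is then $L^1$-convergence: $g_n \cdot 2\pi\mathcal{F}^{-1}\rho_\varepsilon \to g_n$ in $L^1$ as $\varepsilon \to 0$. Indeed, $2\pi\mathcal{F}^{-1}\rho_\varepsilon(t) = \widehat{\rho}(-\varepsilon t)$ (up to sign conventions). This factor is bounded by $\|\rho\|_{L^1}$ and tends to $1$ pointwise, so dominated convergence applies. Choosing $\varepsilon_n$ so that this error is below $1/n$ as well gives $\phi_n \in \mathcal{F}(\D(\R))$, more precisely $\phi_n = \widehat{\varphi}_n$ up to reflection, with $\phi_n \to f$ in $L^1(\R)$.

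The only delicate point I expect is ensuring that the neighborhood of $E$ on which $\widehat{\phi}_n$ vanishes is a genuine open neighborhood when $E$ is unbounded. This is handled by noting that $\supp \widehat g_n$ is compact. Then $\widehat\phi_n$ vanishes on the complement of the $\varepsilon_n$-enlargement of this compact set. On the other hand, on the compact piece of $E$, the set $U_n$ has a positive distance to the complement, and we pick $\varepsilon_n$ smaller than that distance.
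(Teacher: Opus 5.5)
Your proof is correct. The paper gives no proof of this lemma (it is explicitly ``left to the reader''), and your two-step regularization is exactly the kind of verification intended:
\begin{itemize}
\item first cut $\widehat f_n$ off to compact support by convolving with the approximate identity $k_R=\mathcal F^{-1}\psi_R$;
\item then smooth in frequency by multiplying $g_n$ by $\widehat\rho(-\varepsilon t)$, which tends boundedly to $1$, so dominated convergence gives $L^1$-convergence.
\end{itemize}

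One point of wording needs fixing. The set $E\cap\supp\widehat g_n$ is not merely compact but empty, because $\widehat g_n$ vanishes on the open set $U_n\supseteq E$. The clean justification for the last step is this: $\supp\widehat g_n$ is compact and disjoint from the closed set $E$, so $d_n=\operatorname{dist}(E,\supp\widehat g_n)>0$. For any $\varepsilon_n<d_n$, the function $\widehat\phi_n$ then vanishes on the open set $\{\xi:\operatorname{dist}(\xi,\supp\widehat g_n)>\varepsilon_n\}\supseteq E$.

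This formulation also removes a mild circularity in your final paragraph. There, the ``compact piece of $E$'' is defined through the $\varepsilon_n$-enlargement of $\supp\widehat g_n$, which depends on the very $\varepsilon_n$ you are about to choose. Your version can be repaired by using a fixed enlargement (say by $1$) and taking $\varepsilon_n\le 1$, but the distance argument makes the case split unnecessary.
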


We now present two Tauberian results within the next theorem.
In fact, they both turn out to characterize spectral synthesis, thereby showing that the results are, in general, optimal. Given a function $g$, we define its reflection about the origin $\check{g}$ as $\check{g}(t)=g(-t)$.

\begin{theorem}
\label{t:KatznelsonTzafriri}
Let $f \in L^1(\R_+)$, and let $E$ be a closed
 null set.
	The following statements are equivalent:
		\begin{itemize}
			\item[$(a)$] $f$ is of spectral synthesis with respect to $E$;
			\item[$(b)$] for any bounded $C_0$-semigroup $\mathcal{T}$ and $x \in X$,
				\[ i \sigma_{\PF}(A, x) \subseteq E \quad \Longrightarrow \quad  \lim_{t \to \infty} \|T(t) \widehat{f}(T) x \|_X = 0 ; \]
			\item[$(c)$] for any bounded $C_0$-semigroup $\mathcal{T}$,
				\[ 
				i \sigma(A)\cap \R \subseteq E \quad \Longrightarrow \quad \lim_{t \to \infty} \| T(t) \widehat{f}(T) \|_{L(X)} = 0. \] 
						\end{itemize}
\end{theorem}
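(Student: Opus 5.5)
I will prove the cycle $(a)\Rightarrow(b)$, $(a)\Rightarrow(c)$, and then $(b)\Rightarrow(a)$, $(c)\Rightarrow(a)$, each converse via one concrete test semigroup.

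For $(a)\Rightarrow(b)$: by Lemma \ref{l:SpecSynthRegSeq}, pick $\phi_n=\widehat{\varphi}_n$ with $\varphi_n\in\D(\R)$ vanishing on a neighborhood $U_n$ of $E$ and $\phi_n\to f$ in $L^1(\R)$. By \eqref{eq:f(T)Resolvent}, $T(t)\widehat f(T)x=\lim_n\ev{R(i\xi,A)x}{e^{it\xi}\varphi_n(\xi)}$ uniformly in $t\ge0$. It therefore suffices to show that each term tends to $0$ as $t\to\infty$. Since $i\sigma_{\PF}(A,x)\subseteq E$ and $\supp\varphi_n\subseteq\R\setminus U_n$ is a compact subset of the open set $\R\setminus\singsupp_{\PF}R(i\,\cdot,A)x$, a partition of unity and \eqref{eq:LocalPseudoTestFunc} give $\ev{R(i\xi,A)x}{e^{it\xi}\varphi_n(\xi)}=o(1)$. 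Interchanging the limits by uniformity yields the claim.

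For $(a)\Rightarrow(c)$ the argument is identical, now in $L(X)$. The hypothesis $i\sigma(A)\cap\R\subseteq E$ means $R(\lambda,A)$ extends analytically across $i(\R\setminus E)$. Hence $R(i\xi,A)$ is smooth there as an $L(X)$-valued distribution, so it lies in $\PFloc(\R\setminus E;L(X))$ (this also follows from Proposition \ref{SpectraCoincide}). The pairings $\ev{R(i\xi,A)}{e^{it\xi}\varphi_n(\xi)}$ then tend to $0$ in operator norm. The uniformity in $x$ in bounded sets, stated with \eqref{eq:f(T)Resolvent}, lets me pass to the limit in $\|\cdot\|_{L(X)}$.

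For the converses I will use a single test semigroup: the translation semigroup on a space built from $E$. I take $X=$ the closed ideal $I_E$ (or rather the quotient construction) in $L^1(\R)$, namely
\[
X=L^1(\R)\big/\overline{J(E)},
\]
where $J(E)=\{g\in L^1(\R)\mid \widehat g \text{ vanishes near } E\}$. On $X$ I let $T(t)$ be the translation by $t$ acting through the convolution-module structure. This is an isometric, hence bounded, $C_0$-semigroup.

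The key steps for this construction are as follows.

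\begin{itemize}
\item[(i)] Show $i\sigma(A)\cap\R\subseteq E$. For $\xi_0\notin E$, choose $h\in L^1$ with $\widehat h=1$ near $\xi_0$ and $\widehat h$ vanishing near $E$. Then $h\in J(E)$, and local division by $i\xi_0-i\xi$ (Wiener's lemma) produces a resolvent near $i\xi_0$. This also gives $i\sigma_{\PF}(A,x)\subseteq E$ for every $x$.
\item[(ii)] Take $x$ to be the class of an approximate identity element, or work directly with orbits. Then $T(t)\widehat f(T)x$ corresponds to the class of the translate $f(\cdot-t)$ convolved with that element, and its norm is $\operatorname{dist}(f(\cdot-t),\overline{J(E)})=\operatorname{dist}(f,\overline{J(E)})$ by translation invariance.
\item[(iii)] Conclude: if that norm tends to $0$, it must equal $0$, so $f\in\overline{J(E)}$. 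This is precisely $(a)$.
\end{itemize}

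The main obstacle is making step (ii) rigorous, since the quotient lacks a unit. I will instead apply $(b)$ to the vectors $x_k=[e_k]$ for an approximate identity $(e_k)$. This gives $\operatorname{dist}(f*e_k,\overline{J(E)})=0$ for every $k$, and letting $k\to\infty$ gives $f\in\overline{J(E)}$; the same reasoning handles $(c)$. I also need care that $\widehat f(T)$ involves only $t\ge0$, which holds since $f\in L^1(\R_+)$.
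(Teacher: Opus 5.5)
Your proof is correct, and for $(a)\Rightarrow(b),(c)$ it is the paper's argument (via \eqref{eq:f(T)Resolvent} and Lemma~\ref{l:SpecSynthRegSeq}). One sign point there: since $\widehat{\phi_n}=2\pi\check{\varphi}_n$, it is $\varphi_n$ that vanishes near $-E$. Likewise $i\sigma_{\PF}(A,x)\subseteq E$ means $\singsupp_{\PF}R(i\,\cdot,A)x\subseteq -E$, and the same holds in $(c)$. Your two slips cancel.

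For the converses you take a genuinely different route. The paper uses Hahn--Banach to produce $\kappa\in L^\infty(\R)$ that annihilates $\mathcal{I}(E)$ and pairs nontrivially with $f$, so that $\supp\widehat\kappa\subseteq E$. It then works with the multiplication semigroup $e^{-itu}$ on the pseudomeasures $\PM(\R)$.
\begin{itemize}
\item For $(b)$ it proves $i\sigma_{\PF}(A,\psi)=\supp\psi$ and tests on the single vector $\widehat\kappa$. The ambient spectrum there is all of $i\R$, so only the pseudofunction spectrum at $\widehat\kappa$ is small.
\item For $(c)$ it restricts to the closure of $\mathcal{A}(\R)\cdot\widehat\kappa$, computes $\sigma_{\uPF}(A)$, and invokes Proposition~\ref{SpectraCoincide}.
\end{itemize}
Your quotient $L^1(\R)/\overline{J(E)}$ with right translations is the predual of that picture: its dual is, up to reflection, the annihilator in which $\kappa$ lives. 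The identity $\|T(t)\widehat f(T)[g]\|=\operatorname{dist}(f*g,\overline{J(E)})$, which is constant in $t$, replaces Hahn--Banach. Because this semigroup satisfies $i\sigma(A)\cap\R\subseteq E$ outright, one space serves both $(b)$ (via $\sigma_{\PF}(A,x)\subseteq\sigma(A)\cap i\R$) and $(c)$. You need no pseudofunction computations and no Proposition~\ref{SpectraCoincide}. In exchange, the paper obtains explicit descriptions of $\sigma_{\PF}$ and $\sigma_{\uPF}$ for its test semigroups.

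The step you must actually write out is (i), and it is a global division, not only a local one.
\begin{itemize}
\item \emph{Sign.} With $T(t)[g]=[g(\cdot-t)]$, which is the choice forced by $\widehat f(T)[g]=[f*g]$, the generator acts on the Fourier side as multiplication by $-i\xi$. So for $\xi_0\notin E$ the point to show lies in $\rho(A)$ is $-i\xi_0$, not $i\xi_0$. Dividing by $i\xi_0-i\xi$ as you wrote would produce $-E$.
\item \emph{Construction.} Take $\chi\in\D(\R)$ with $\chi=1$ near $\xi_0$ and $\supp\chi\cap E=\varnothing$. Then $\mathcal{F}^{-1}(\chi\widehat g)\in J(E)$.
\item \emph{Global part.} The function $(1-\chi(\xi))/(i(\xi-\xi_0))$ lies in $\mathcal{A}(\R)$ because it and its derivative are in $L^2$; the $1/\xi$ tail at infinity is harmless. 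Wiener's local lemma alone does not give this.
\item \emph{Conclusion.} For $\lambda$ near $-i\xi_0$, the maps $[g]\mapsto[\mathcal{F}^{-1}((1-\chi)/(\lambda+i\xi))*g]$ continue $R(\lambda,A)$ holomorphically and boundedly across $-i\xi_0$. Since $\|R(\lambda,A)\|\geq 1/\operatorname{dist}(\lambda,\sigma(A))$, this forces $-i\xi_0\in\rho(A)$.
\end{itemize}
With this in place, your approximate-identity argument closes both converses.
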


\begin{proof}
If $f$ is of spectral synthesis with respect to $E$, then $(b)$ and $(c)$ follow directly from \eqref{eq:f(T)Resolvent} and Lemma \ref{l:SpecSynthRegSeq}.

Suppose now $f$ is not of spectral synthesis with respect to $E$.
We show that $(b)$ and $(c)$ cannot hold, following an idea due to Bourgain and used in \cite{K-T-PowerBoundOp}.
We first construct a suitable function $\kappa$.
Consider
	\[ \mathcal{I}(E) = \{ g \in L^1(\R) \mid \widehat{g} \text{ vanishes on a neighborhood of } E \} . \]
Then $f \notin \overline{\mathcal{I}(E)}$.
Consequently, by the Hahn-Banach theorem, there exists a $\kappa \in L^\infty(\R)$ such that 
	\[ \int_{0}^\infty f(u) \kappa(- u) \,  \mathrm{d}u = 1 \quad \text{but} \quad \int_{-\infty}^\infty g(u) \kappa(- u) \, \mathrm{d}u = 0, ~ \forall g \in \mathcal{I}(E) . \]
Necessarily, $\supp \widehat{\kappa} \subseteq E$.
Indeed, if $\xi_0 \notin E$, then $\xi_0 \in I$ for some open $I \subseteq \R$ such that $E \cap \overline{I} = \varnothing$.
For any $\varphi \in \D(I)$ we have that $\widehat{\varphi}(-\xi) \in \mathcal{I}(E)$.
Therefore, $\ev{\widehat{\kappa}}{\varphi} = 0$ for any $\varphi \in \D(I)$, so that $\supp \widehat{\kappa} \cap I = \varnothing$, which shows our claim.

We will work with the Banach space of pseudomeasures
	\[ \PM(\R) = \mathcal{F}(L^{\infty}(\R)) , \qquad \|\psi\|_{\PM} = \|\widehat{\psi} \|_{L^\infty} , \]
and the bounded $C_0$-semigroup $\mathcal{T}$ defined on $\PM(\R)$ as $(T(t) \psi)(u) = e^{- i t u} \psi(u)$.
Note that 
	\[ \|T(t) \psi\|_{\PM} = \|\widehat{\psi}(\cdot + t)\|_{L^\infty} = \|\widehat{\psi}\|_{L^\infty} = \|\psi\|_{\PM} , \qquad \psi \in \PM(\R), ~t \geq 0 . \]
Therefore, the orbit $T(t)\psi$ is stable if and only if $\psi \equiv 0$.
Moreover, $\PM(\R)$ is a Banach module over the Wiener algebra
 $ \mathcal{A}(\R) =\mathcal{F}(L^{1}(\R))  $ under pointwise multiplication.
Then
	\[ \widehat{f}(T) \psi = \widehat{f} \cdot \psi , \qquad \psi \in \PM(\R) . \]
So, the orbit $T(t) \widehat{f}(T) \psi$ is stable if and only if $\widehat{f} \cdot \psi \equiv 0$.

We now handle $(b)$ and $(c)$ separately. 

$(b) \Longrightarrow (a)$:
Note that $\widehat{\kappa} \in \PM(\R)$ but $\widehat{f} \cdot \widehat{\kappa} \neq 0$.
Therefore, the orbit $T(t) \widehat{f}(T) \widehat{\kappa}$ cannot be stable.
We now claim that
	\begin{equation}
		\label{eq:LocPseudoSpecIsSupport}
	 	i \sigma_{\PF}(A, \psi) = \supp \psi , \qquad \psi \in \PM(\R) .  
	\end{equation}
This would in particular show that $(b)$ cannot hold, since we would then have $i \sigma_{\PF}(A, \widehat{\kappa}) \subseteq E$.
Now, fix any $\psi \in \PM(\R)$ and $\varphi \in \D(\R)$.
A calculation shows 
	\begin{equation*}
		\| \ev{R(i \xi, A) \psi}{e^{i t \xi} \varphi(\xi)} \|_{\PM}
		= \Big\| \int_{-t}^\infty  \widehat{\varphi}(u)\widehat{\psi}(\cdot + u) \, \mathrm{d} u  \Big\| _{L^\infty} .
	\end{equation*}
Clearly, the right-hand side goes to zero as $t \to -\infty$.
On the other hand, 
	\[ \lim_{t \to +\infty} \| \ev{R(i \xi, A) \psi}{e^{i t \xi} \varphi(\xi)} \|_{\PM} = \| \widehat{\varphi}* \widehat{\check{\psi}} \|_{L^\infty} = (2\pi)^{-1} \|  \check{\varphi}\cdot\psi  \|_{\PM} . \]
Consequently, the limit on the left-hand side vanishes if and only if $ \varphi(-\xi)\cdot \psi(\xi) \equiv 0$.
This allows us to conclude \eqref{eq:LocPseudoSpecIsSupport}.

$(c) \Longrightarrow (a)$:
We consider the Banach subspace $X$ of $\PM(\R)$ defined as the closure of
	\[ \mathcal{A}(\R) \cdot \widehat{\kappa} = \{ \chi \cdot \widehat{\kappa} \mid \chi \in \mathcal{A}(\R) \} . \]
Clearly $X$ is invariant under $\mathcal{T}$.
Fix $\varphi \in L^1(\R)$ with $\int \varphi (u)\:\mathrm{d}u= 1$ and define the sequence of functions $\varphi_n (u)= n \varphi(n u)$ for $n \in \N$.
Then $\{ \widehat{\varphi}_n \cdot \widehat{\kappa} \mid n \in \N \}$ is a bounded sequence in $X$ and $\widehat{f} \cdot \widehat{\varphi}_n \cdot \widehat{\kappa} \to \widehat{f} \cdot \widehat{\kappa}$ in $\PM(\R)$.
Hence, for some $c > 0$,
	\[ \| T(t) \widehat{f}(T) \|_{L(X)} \geq c \sup_{n \in \N} \| \widehat{f} \cdot \widehat{\varphi}_n \cdot \widehat{\kappa} \|_{\PM} \geq c \| \widehat{f} \cdot \widehat{\kappa} \|_{\PM} > 0 . \]
Therefore, the left-hand side cannot vanish as $t \to \infty$.
As a result, the implication would follow if 
	\begin{equation}
		\label{eq:UniformPseudoSpecIsSupport}
	 	i \sigma_{\uPF}(A) = \supp \widehat{\kappa}  .  
	\end{equation}
As before, for any $\varphi \in \D(\R)$, we have
	\[ \lim_{t \to +\infty} \| \ev{R(i \xi, A)}{e^{i t \xi} \varphi(\xi)} \|_{L(X)} = (2\pi)^{-1} \sup_{\| \chi \cdot \widehat{\kappa} \|_{\PM} \leq 1} \| \chi \cdot \widehat{\kappa} \cdot \check{\varphi} \|_{\PM} , \]
while the limit as $t \to -\infty$ vanishes.
This clearly shows \eqref{eq:UniformPseudoSpecIsSupport}.
\end{proof}

\begin{remark} 
The implication $(a) \Longrightarrow (c)$ from Theorem \ref{t:KatznelsonTzafriri} was already proved in \cite[Theorem 5.2]{C-T-IdeaResult}.
\end{remark}

We may now show Theorem \ref{t:KatznelsonTzafririIntro} from the Introduction.

\begin{proof}[Proof of Theorem \ref{t:KatznelsonTzafririIntro}]
By Theorem \ref{t:KatznelsonTzafriri}, it suffices to show that condition $(b)$ from Theorem \ref{t:KatznelsonTzafriri} is equivalent to the following statement for any bounded $C_0$-semigroup $\mathcal{T}$:
	\begin{equation}
		\label{eq:KatznelsonTzafririFull}
		i \sigma_{\PF}(A) \subseteq E \quad \Longrightarrow \quad \lim_{t \to \infty} \| T(t) \widehat{f}(T) x \|_{X} = 0 , ~\forall x \in X . 
	\end{equation}
By \eqref{eq:InclusionChainSpectra}, the latter statement is contained in $(b)$ from Theorem \ref{t:KatznelsonTzafriri}.
Suppose now that \eqref{eq:KatznelsonTzafririFull} is true for any bounded $C_0$-semigroup $\mathcal{T}$.
Take any such $\mathcal{T}$ and suppose $x_0 \in X$ is an element for which $i \sigma_{\PF}(A, x_0) \subseteq E$.
Let $Y$ be the closure of the linear span $M$ of the orbit $T(t) x_0$.
Then $\mathcal{T}_{\mid Y}$ defines a bounded $C_0$-semigroup with generator $A_{\mid Y}$.
Clearly, $\sigma_{\PF}(A_{\mid Y}, M) = \sigma_{\PF}(A, x_0)$, so also $\sigma_{\PF}(A_{\mid Y}) = \sigma_{\PF}(A, x_0)$ by Proposition \ref{PFClosure}.
In particular, $i \sigma_{\PF}(A_{\mid Y}) \subseteq E$ and then \eqref{eq:KatznelsonTzafririFull} implies that $T(t) \widehat{f}(T) x_0 \to 0$ as $t \to \infty$.
Therefore, the property $(b)$ from Theorem \ref{t:KatznelsonTzafriri} has been established.
\end{proof}

We now demonstrate that, as a matter of fact, there exist closed null sets for which Theorems \ref{t:KatznelsonTzafririIntro} and \ref{t:KatznelsonTzafriri} only yield trivial information. 

\begin{proposition}
\label{p:NoSpectralSynthesis}
There exists a closed null set $E$ such that no non-trivial element of $L^1(\R_+)$ is of spectral synthesis with respect to $E$.
\end{proposition}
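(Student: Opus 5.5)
The plan is to reduce the statement to a duality condition and then build $E$ as a countable union of small copies of a Malliavin-type set of non-synthesis, one inside every rational interval. The main obstacle is the uniform non-synthesis property of the building block in the third paragraph; I do not have a complete argument for it, and it may need an input from the literature beyond what the paper states.

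\emph{Duality reformulation.} Take $f \in L^1(\R_+)$ of spectral synthesis with respect to $E$, with approximants $f_n \in \mathcal{I}(E)$, $f_n \to f$ in $L^1(\R)$. The closure $\overline{\mathcal{I}(E)}$ is a closed translation-invariant ideal of $L^1(\R)$, so $f*h \in \overline{\mathcal{I}(E)}$ for every $h \in L^1(\R)$. Let $\kappa \in L^\infty(\R)$ have $\supp \widehat{\kappa} \subseteq E$. As in the proof of Theorem \ref{t:KatznelsonTzafriri}, $\kappa$ annihilates $\mathcal{I}(E)$, and hence all $f*h$. This gives the necessary condition
\[
\widehat{f} \cdot \widehat{\kappa} = 0 \ \text{in } \PM(\R) \qquad \text{for every } \widehat{\kappa} \in \PM(\R) \text{ with } \supp \widehat{\kappa} \subseteq E .
\]
It therefore suffices to construct a closed null set $E$ carrying enough pseudomeasures that no nonzero $\widehat{f}$ with $f \in L^1(\R_+)$ annihilates all of them.

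\emph{Construction of $E$.} Fix a compact null set $E_0 \subset [0,1]$ of Malliavin type \cite{M-ImposSynthSpec}. Enumerate the rational intervals $J_n$. Place a dilated and translated copy $E_n = a_n + b_n E_0 \subset J_n$ in each, with $b_n \to 0$ fast. Set $E = \overline{\bigcup_n E_n}$. This is a countable union of null sets together with its limit points; choosing the $E_n$ inside a fixed closed null set, for instance a fat-free Cantor-type skeleton built simultaneously, keeps $E$ closed and null.

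Now let $f \neq 0$ in $L^1(\R_+)$. Since $\widehat{f}$ is continuous and not identically zero, it is nonzero on some rational interval $J_n$. Dilation and translation act isometrically and compatibly on $\PM(\R)$ and on the module action of $\mathcal{A}(\R)$. So it suffices to prove the following for the single block $E_0$:
\begin{quote}
there is a pseudomeasure $\kappa_0$ supported in $E_0$ such that $\chi \cdot \kappa_0 \neq 0$ for every $\chi \in \mathcal{A}(\R)$ that does not vanish identically on a neighbourhood of $\supp \kappa_0$.
\end{quote}
Applied to $\chi = \widehat{f}$ restricted near $E_n$, this contradicts the duality condition. Note that a naive vanishing argument cannot work: by Rudin--Carleson-type results, boundary values of analytic functions may vanish on arbitrary closed null sets. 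So pseudomeasure duality, not a zero-set argument, must carry the proof.

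\emph{Main obstacle.} The hard step is producing $\kappa_0$ with this uniform non-annihilation property. A single Malliavin pseudomeasure only defeats one specific $g$. What I would try first is a Malliavin/Kahane--Katznelson construction of a set carrying a pseudofunction $\kappa_0$ of "independent" type. For such $\kappa_0$, I would aim to show that the ideal $\{\chi : \chi \kappa_0 = 0\}$ consists of functions vanishing on a neighbourhood of $E_0$, by a symbolic-calculus or Kronecker-set argument. If that fails, the fallback is to replace the single $\kappa_0$ by a countable family $\kappa_{0,j}$, obtained from Malliavin's theorem applied to a dense sequence $g_j$ in $\mathcal{A}(\R)$ restricted near $E_0$. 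A diagonal argument would then give that any nonzero $\chi$ is close to some $g_j$, and $\chi \kappa_{0,j} \neq 0$ would follow by a quantitative version of Malliavin's estimate.
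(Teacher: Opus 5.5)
Your proposal has a genuine gap, and the route you explicitly rule out is the one that works.

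The set $E$ cannot be built as you describe. A closed null set has empty interior, so it is nowhere dense. It therefore cannot contain a copy $E_n \subset J_n$ for every rational interval $J_n$; indeed $\overline{\bigcup_n E_n}$, being closed and dense, is all of $\R$. This matters for the whole approach. Since $E$ must miss some open interval $J$, every nonzero $f \in L^1(\R)$ with $\widehat{f} \in \D(J)$ is trivially of spectral synthesis with respect to $E$. So the statement is false with $L^1(\R)$ in place of $L^1(\R_+)$, and the one-sided support of $f$ has to be used in an essential way.

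Your key block property does not use it. It is stated for arbitrary $\chi \in \mathcal{A}(\R)$, and in that form it fails for every pseudomeasure $\kappa_0 \neq 0$. Pseudomeasures are locally distributions of order at most one: the Fourier transform of a bounded function is a continuous function plus the derivative of an $L^2$ function. Hence $\chi\kappa_0 = 0$ for every $\chi \in \D(\R)$ with $\chi = \chi' = 0$ on $\supp\kappa_0$. Such a $\chi$ exists that vanishes near $\supp\kappa_0$ only on the null set $\supp\kappa_0$: take $\chi = \psi h^2$, with $h$ smooth, $h^{-1}(0) = \supp\kappa_0$, and $\psi$ a cutoff. Conversely, restricted to $\chi = \widehat{f}$ with $f \in L^1(\R_+)$, your duality condition is by Hahn--Banach just equivalent to the proposition, so nothing has been reduced. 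The fallback via a dense sequence $g_j$ and a ``quantitative Malliavin estimate'' supplies no mechanism either.

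The missing idea is precisely the zero-set argument you dismiss. The Rudin--Carleson theorem concerns the disc algebra. The class $\{\widehat{f} \mid f \in L^1(\R_+)\}$ instead behaves like the analytic Wiener algebra $\mathcal{A}^+(\mathbb{T})$. For that algebra Carleson proved there is a closed null set $\Theta \subset \mathbb{T}$ of uniqueness: $F \in \mathcal{A}^+(\mathbb{T})$ and $F|_{\Theta} = 0$ imply $F \equiv 0$.

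The paper takes $E = \Theta + 2\pi\mathbb{Z}$ and proves the stronger fact that $\widehat{f}|_E = 0$ forces $f = 0$ for $f \in L^1(\R_+)$, using Hedenmalm's transference:
\begin{itemize}
\item For almost every $s \in (0,1)$, the function $F_s(\theta) = \sum_{n \geq 0} f(n+s) e^{-in\theta}$ lies in $\mathcal{A}^+(\mathbb{T})$.
\item For $\theta \in \Theta$, the Fourier coefficients of $s \mapsto e^{-i\theta s} F_s(\theta)$ on $(0,1)$ are $\widehat{f}(\theta + 2\pi m) = 0$, $m \in \mathbb{Z}$.
\item Hence $F_s$ vanishes on $\Theta$, so $F_s \equiv 0$, and therefore $f = 0$.
\end{itemize}
Spectral synthesis with respect to $E$ forces $\widehat{f}$ to vanish on $E$, because the $\widehat{f}_n$ converge uniformly. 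The proposition therefore follows with no pseudomeasure duality at all. The half-line enters exactly through the sums defining $F_s$ running only over $n \geq 0$.
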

\begin{proof}
We will show the stronger statement that there exists a closed null set $E$ that is a set of spectral uniqueness for $L^1(\R_+)$, meaning
	\begin{equation}
		\label{eq:SpectralUniqueness} 
		\widehat{f}(\xi)=0, \quad \forall \xi\in E
		\quad \Longrightarrow \quad f \equiv 0 , \qquad \forall f \in L^1(\R_+) . 
	\end{equation}
Since the Fourier transform of any $f \in L^1(\R_+)$ that is of spectral synthesis with respect to $E$ must necessarily vanish on $E$, this would yield our example.	

Let $\mathcal{A}^+(\mathbb{T})$ denote the analytic Wiener algebra, that is,
	\[ \mathcal{A}^+(\mathbb{T}) = \left\{ F : \mathbb{T} \to \C \mid F(\theta) = \sum_{n = 0}^\infty a_n e^{- i n \theta} \text{ with } \sum_{n = 0}^\infty |a_n| < \infty \right\} . \]
Carleson showed \cite[Theorem 8]{C-SetsUniquenessCircle} that there exists a closed null set $\Theta \subset \mathbb{T}$ that is a set of uniqueness for $\mathcal{A}^+(\mathbb{T})$, meaning
	\begin{equation}
		\label{eq:CarlesonUniqueness} 
		F_{\mid \Theta} \equiv 0 \quad \Longrightarrow \quad F \equiv 0 , \qquad \forall F \in \mathcal{A}^+(\mathbb{T}) . 
	\end{equation}
Set
	\[ E = \Theta + 2 \pi \mathbb{Z} . \]
Then $E$ is a closed null subset of $\R$.
We now verify that $E$ satisfies \eqref{eq:SpectralUniqueness}.
To do so, we follow a technique from Hedenmalm in \cite{H-Comparsionl1L1} to transfer functions from $L^1(\R_+)$ to $\mathcal{A}^+(\mathbb{T})$.

Fix any $f \in L^1(\R_+)$ for which $\widehat{f}(\xi) = 0$ for all $\xi \in E$.
We have
	\[ \sum_{n = 0}^\infty |f(n + s)| < \infty \]
for almost all $s \in (0, 1)$.
Consequently,  the functions
	\[ F_s(\theta) = \sum_{n = 0}^\infty f(n + s) e^{- i n \theta} \]
belong to $\mathcal{A}^+(\mathbb{T})$ for almost all $s \in (0, 1)$.
Take any $\theta \in \Theta$. 
 Since $\theta + 2\pi m \in E$ for $m \in \mathbb{Z}$, we find
	\begin{align*} 
		\int_{0}^1 e^{-2\pi i m s} \left( e^{- i \theta s} F_s(\theta) \right) \, \mathrm{d}s 
		&= \int_0^1 e^{- i (\theta + 2 \pi m) s} \left( \sum_{n = 0}^\infty f(n + s) e^{- i n \theta} \right) \, \mathrm{d}s \\
		&= \sum_{n = 0}^\infty \int_0^1 e^{- i (\theta + 2 \pi m) (n + s)} f(n + s) \, \mathrm{d}s
		= \widehat{f}(\theta + 2 \pi m) = 0 .
	\end{align*}
Therefore, the Fourier coefficients of the $L^1(0, 1)$ function
	\[ s \mapsto e^{- i \theta s} F_s(\theta) \]
are zero, so that in particular the function itself must be zero.
Hence, $F_s(\theta) = 0$ for any $\theta \in \Theta$ and almost all $s \in (0, 1)$.
Consequently, by \eqref{eq:CarlesonUniqueness}, we see that $F_s \equiv 0$ for almost all $s \in (0, 1)$.
This allows us to conclude that $f(n + s) = 0$ for all $n \in \N_0$ and almost all $s \in (0, 1)$.
Necessarily $f \equiv 0$, so that $E$ satisfies \eqref{eq:SpectralUniqueness}, completing the proof of this proposition.
\end{proof}

\section{Wiener kernels and strongly stable semigroups}\label{Wiener kernels}
In connection with the previous section, we would like to point out that strong stability also admits a characterization in terms of orbit stability of the semigroup on the range of $\widehat{f}(T)$ when $f\in L^{1}(\mathbb{R}_{+})$ is a Wiener kernel. Recall \cite[Chapter II]{K-TaubTh}  $f\in L^{1}(\mathbb{R})$ is called a Wiener kernel if its Fourier transform $\widehat{f}$ never vanishes.

We need to introduce some notions in preparation for our considerations. Let $Z$ be a Banach space and let $I \subseteq \R$ be an open set. 
 We start by explaining that the elements of $\PF_{\loc}(I;Z)$ can be multiplied by more general functions than just smooth ones (see Subsection \ref{sub v-v pseudofunctions}) in a natural way. 
 Recall, we write $\mathcal{A}(\mathbb{R})=\mathcal{F}(L^{1}(\R))$ for the Wiener algebra.
We then define the local Wiener algebra $\mathcal{A}_{\loc}(I)$ as the space of continuous functions that coincide with elements of the Wiener algebra on each finite open subinterval of $I$. The inclusion $L^{1}(\R)\ast C_0(\R;Z)\subset C_0(\R;Z)$ directly transfers into an $\mathcal{A}(\R)$ multiplication module structure on $\PF(\R; Z)$. Localizing, this induces the natural module structure of  $\PF_{\loc}(I;Z)$  over the algebra  $\mathcal{A}_{\loc}(I)$.

The global space of $Z$-valued pseudomeasures is defined as $PM(\R;Z)=\mathcal{F}(L^{\infty}(\R;Z))$. Note again that $PM(\R; Z)$ is a multiplication module over the Wiener algebra $\mathcal{A}(\R)$.
We mention that for a bounded $C_{0}$-semigroup we always have $R(i\xi,A)\in PM(\R; L(X))$. 

We have the following important observation about the difference between $\sigma_{\PF}(A, x)$ and $\sigma_{\PF}(A, \widehat{f}(T) x)$.
We write $Z(\hat{f})$ for the zero set of $\widehat{f}$,
that is, $Z(\hat{f}) = \widehat{f}^{-1}(0)$.

\begin{proposition}\label{p:Wiener Tauberian semigroups} Let $\mathcal{T}$ be a bounded $C_0$-semigroup and let $f \in L^1(\R_+)$. 
Then, for each $x\in X$,
	\[ \sigma_{\PF}(A, \widehat{f}(T)x) \subseteq \sigma_{\PF}(A, x) \subseteq \sigma_{\PF}(A, \widehat{f}(T)x) \cup [- i Z(\hat{f})] . \]
\end{proposition}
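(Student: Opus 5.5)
The first inclusion comes directly from Proposition \ref{l:CommutingOperator}. For the second, the plan is to prove a distributional identity expressing $R(i\xi,A)\widehat{f}(T)x$ as $\widehat{f}(-\xi)R(i\xi,A)x$ plus a global pseudofunction, and then divide by $\widehat{f}(-\xi)$ locally wherever it does not vanish, using Wiener's lemma.

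\emph{Step 1 (first inclusion).} Take $Y=X$ and $L=\widehat{f}(T)=\int_0^\infty f(s)T(s)\,\mathrm{d}s\in L(X)$. This operator commutes with $R(\lambda,A)$ for $\lambda\in\C_+$, because each $T(s)$ does. Proposition \ref{l:CommutingOperator} then gives $\sigma_{\PF}(A,\widehat{f}(T)x)\subseteq\sigma_{\PF}(A,x)$ directly.

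\emph{Step 2 (a time-side identity).} Let $\tau(t)=T(t)x$ for $t\geq0$ and $\tau(t)=0$ for $t<0$, so that $\widehat{\tau}(\xi)=R(i\xi,A)x$. Put $u(t)=\int_0^\infty f(s)\tau(t+s)\,\mathrm{d}s$. For $t\geq 0$ we have $u(t)=T(t)\widehat{f}(T)x$. Moreover $u=\check f\ast\tau$ on all of $\R$, since $f$ vanishes on $(-\infty,0)$. Hence
\[
T(t)\widehat{f}(T)x\,\mathbf 1_{[0,\infty)}(t)=(\check f\ast\tau)(t)-h(t),\qquad h(t)=\mathbf 1_{(-\infty,0)}(t)\,(\check f\ast\tau)(t).
\]
The function $h$ is bounded, with $\|h(t)\|_X\leq (\sup_s\|T(s)\|)\,\|x\|_X\int_{-t}^\infty|f(s)|\,\mathrm{d}s\to0$ as $t\to-\infty$, and $h=0$ for $t\geq0$. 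Consequently $\widehat{\phi}\ast h\to0$ at $\pm\infty$ for every $\varphi\in\D(\R)$, so by \eqref{eq:LocalPseudoTestFunc} we get $\widehat h\in\PFloc(\R;X)$. Taking Fourier transforms and using the $\mathcal{A}(\R)$-module structure of $\PM(\R;X)$ (note $\widehat\tau\in\PM(\R;X)$), we obtain
\[
R(i\xi,A)\widehat{f}(T)x=\widehat f(-\xi)\,R(i\xi,A)x-\widehat h(\xi).
\]

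\emph{Step 3 (local division).} Let $i\xi_0\notin\sigma_{\PF}(A,\widehat{f}(T)x)\cup[-iZ(\hat f)]$, so that $\widehat f(-\xi_0)\neq0$. By the local form of Wiener's lemma, there is $\chi\in\mathcal{A}(\R)$ with $\chi(\xi)\widehat f(-\xi)=1$ on a neighbourhood $I$ of $\xi_0$. Shrinking $I$, we may also assume $R(i\xi,A)\widehat{f}(T)x\in\PFloc(I;X)$. Multiplying the identity from Step 2 by $\chi$ then gives
\[
R(i\xi,A)x=\chi(\xi)\bigl(R(i\xi,A)\widehat f(T)x+\widehat h(\xi)\bigr)\quad\text{on } I.
\]
The right-hand side lies in $\PFloc(I;X)$ by the $\mathcal A_{\loc}$-module structure, so $i\xi_0\notin\sigma_{\PF}(A,x)$.

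The main obstacle is making Step 2 rigorous. One must check that multiplication of $\widehat\tau\in\PM(\R;X)$ by $\widehat{\check f}$ agrees with the Fourier transform of the convolution, and that $\widehat{\check f}(\xi)=\widehat f(-\xi)$. To do this I would first prove the identity for $f\in\mathcal F(\D(\R))$ restricted suitably, or test against $\varphi\in\D$, and then pass to general $f\in L^1(\R_+)$ by $L^1$-density. A secondary obstacle is checking that the module action of $\mathcal A_{\loc}$ on $\PFloc$ is compatible with the pointwise product used in Step 3.
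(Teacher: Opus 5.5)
Your proposal is correct and follows the paper's proof essentially step for step: Proposition \ref{l:CommutingOperator} gives the first inclusion, the same time-side decomposition $T(t)\widehat{f}(T)x\,\mathbf{1}_{[0,\infty)}(t)=(\check f\ast\tau)(t)-h(t)$ yields $\widehat f(-\xi)R(i\xi,A)x\in R(i\xi,A)\widehat f(T)x+\PFloc(\R;X)$ (the paper's $g$ is your $-h$), and Wiener's local division finishes the argument. The only differences are cosmetic. You use the local-inverse form $\chi\widehat f(-\cdot)=1$ near $\xi_0$, whereas the paper writes each $\varphi\in\D(I)$ as $\psi\,\widehat f(-\cdot)$ with $\psi \in \mathcal{A}(\R)\cap C_c(I)$. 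Also, $\widehat h$ is only a local pseudofunction everywhere rather than a global one, since $h$ can jump at $0$; that is all you actually use.
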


\begin{proof}
Since $R(\lambda, A)$ and $T(t)$ commute for each $t\in\mathbb{R}_{+}$ and $\lambda \in \C_+$, so do $R(\lambda, A)$ and $\widehat{f}(T)$. 
Proposition \ref{l:CommutingOperator} thus yields the inclusion $\sigma_{\PF}(A,\widehat{f}(T)x)\subseteq\sigma_{\PF}(A,x)$.

As tacitly done before, we extend $T(t) = 0$ for $t < 0$, so that $\widehat{T}(\xi) = \mathcal{F}\{T(t); \xi\} = R(i\xi, A)\in \mathcal{S}'(\R, L(X))$.
Then we find the identity
	\[ T(t) \widehat{f}(T) x = (\check{f} * [T(\cdot)x])(t) + g(t) \]
where $g(t) = 0$ for $t \geq 0$ and $g(t) = - \int_{-t}^\infty f(s) T(t + s)x \, \mathrm{d}s$ otherwise.
In particular, $g(t) = o(1)$ as $|t| \to \infty$.
Consequently, $\widehat{g} \in \PFloc(\R; X)$.
Since $\widehat{f}(-\xi) R(i \xi, A)x \in PM(\R; X)$, applying the Fourier transform on the previous identity yields
	\begin{equation} 
		\label{eq:PseudoSpecMultFourier}
		\widehat{f}(-\xi) R(i \xi, A)x  \in R(i \xi, A) \widehat{f}(T)x + \PFloc(\R; X) . 
	\end{equation}
	
Take now any open interval $I$ such that $R(i\xi, A)\widehat{f}(T)x \in \PFloc(I;X)$ and $\widehat{f}(-\xi) \neq 0$ for any $\xi \in I$.
By \eqref{eq:PseudoSpecMultFourier} we also have that $\widehat{f}(-\xi) R(i \xi, A) x \in \PFloc(I; X)$.
Fix some arbitrary $\varphi \in \D(I)$.
Since $\widehat{f}(-\xi)$ does not vanish on the support of $\varphi$, Wiener's local division theorem \cite[Theorem II.7.3, p.~81]{K-TaubTh} yields the existence of some $\psi \in \mathcal{A}(\R)\cap C_{c}(I)$ such that $\varphi(\xi)= \psi(\xi) \cdot \widehat{f}(-\xi)$.
Then
	\[ \varphi(\xi) R(i \xi, A) x = \psi(\xi) \cdot \widehat{f}(-\xi) R(i \xi, A) x \in \mathcal{A}_{\loc}(I) \cdot \PFloc(I;X) \subseteq\PFloc(I;X). \]
This shows that $R(i \xi, A) x \in \PFloc(I; X)$.
As $I$ was arbitrary, it follows that 
	\[ \sigma_{\PF}(A, x) \subseteq  \sigma_{\PF}(A, \widehat{f}(T)x) \cup [-i Z(\hat{f})] . \]
Our proof is complete.
\end{proof}

As an immediate corollary of Proposition \ref{p:Wiener Tauberian semigroups} and Theorem \ref{t:StableOrbitCountable}, when $f \in L^1(\R_+)$ is a Wiener kernel, namely, $Z(\hat{f}) = 0$,
 we obtain that the orbit $T(t)x$ is stable if and only if so is $T(t)\widehat{f}(T)x$. Moreover, we have the following characterization of Wiener kernels.

\begin{theorem}
\label{t:Wiener Tauberian semigroups}
	Let $f \in L^1(\R_+)$. Then, $f$ is a Wiener kernel if and only if for any bounded $C_0$-semigroup $\mathcal{T}$ the following implication holds:
		\[ \lim_{t \to \infty} \|T(t) y \|_X = 0, \quad \forall y\in \rg(\widehat{f}(T)) \quad \Longrightarrow \quad  \lim_{t \to \infty} \|T(t) x \|_X = 0,   \quad \forall x\in X . \]		
\end{theorem}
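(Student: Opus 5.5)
The two directions are handled separately: the "only if" part comes from Proposition \ref{p:Wiener Tauberian semigroups} together with Theorem \ref{t:StableOrbitCountable}, and the "if" part from an explicit counterexample built on a multiplication semigroup.

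\emph{Necessity of the Wiener condition is sufficient for the implication.} Suppose $f$ is a Wiener kernel, so $Z(\hat f)=\varnothing$. Let $\mathcal{T}$ be a bounded $C_0$-semigroup such that every $y\in\rg(\widehat f(T))$ has a stable orbit. Fix $x\in X$ and put $y=\widehat f(T)x$. By Theorem \ref{t:StableOrbitCountable}, $(a)\Leftrightarrow(b)$, we get $\sigma_{\PF}(A,\widehat f(T)x)=\varnothing$. The second inclusion of Proposition \ref{p:Wiener Tauberian semigroups} then gives
\[\sigma_{\PF}(A,x)\subseteq \varnothing\cup[-iZ(\hat f)]=\varnothing.\]
Applying Theorem \ref{t:StableOrbitCountable} once more, the orbit $T(t)x$ is stable. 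Since $x$ was arbitrary, $\mathcal{T}$ is strongly stable.

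\emph{Converse.} Suppose $\widehat f(\xi_0)=0$ for some $\xi_0\in\R$. We must produce a bounded, non-strongly-stable semigroup whose orbits are nevertheless all stable on $\rg(\widehat f(T))$. The candidate is the one-dimensional semigroup $X=\C$, $T(t)=e^{-i\xi_0 t}$. It is isometric, hence bounded and not stable. For it,
\[\widehat f(T)=\int_0^\infty f(t)e^{-i\xi_0 t}\,\mathrm{d}t=\widehat f(\xi_0)=0,\]
so $\rg(\widehat f(T))=\{0\}$ and the hypothesis of the implication holds trivially, while the conclusion fails. It remains to check the sign convention. With the paper's normalization $\widehat{\varphi}(\xi)=\int e^{-i\xi t}\varphi(t)\,\mathrm{d}t$, the semigroup $e^{-i\xi_0 t}$ has generator $-i\xi_0$, which matches the appearance of $-iZ(\hat f)$ in Proposition \ref{p:Wiener Tauberian semigroups}. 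So $T(t)=e^{-i\xi_0 t}$ is the correct choice, not $e^{i\xi_0 t}$.

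\emph{Main obstacle.} The only genuinely delicate step is the use of Proposition \ref{p:Wiener Tauberian semigroups} in the first direction, and that is already available. What I expect to need care is the sign bookkeeping in the counterexample, i.e. matching the generator $-i\xi_0$ with $Z(\hat f)$ as above. If one prefers an example in infinite dimensions, the one-dimensional semigroup can be tensored with the identity on any space $X$.
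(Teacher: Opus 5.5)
Your proposal is correct and follows essentially the same argument as the paper. The paper cites Theorem \ref{Mainresult} where you apply Theorem \ref{t:StableOrbitCountable} orbit by orbit, but that is the same reasoning, and your counterexample $T(t)=e^{-i\xi_0 t}$ on $\C$, with $\widehat f(T)=\widehat f(\xi_0)=0$, is exactly the paper's.
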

\begin{proof}  The sufficiency follows from Proposition \ref{p:Wiener Tauberian semigroups} and  Theorem \ref{Mainresult}.
 For the necessity, assume that $\widehat{f}(\xi_0)=0$ for some $\xi_0\in \R$. Consider the isometric multiplication semigroup $\mathcal{T} = (e^{-i\xi_0t})_{t\geq0}$ on $X=\C$. Clearly, $\rg (\widehat{f}(T))=\{0\}$ but the semigroup is not strongly stable.
\end{proof}

\section{Application: almost periodic semigroups}
\label{sec:AlmostPeriodic}

We shall now apply our findings to obtain a characterization of almost periodic semigroups with countable local pseudofunction spectrum, thereby generalizing and improving a result from \cite{B-V-R-LocalSpecInStab}.
In \cite{B-V-R-LocalSpecInStab}, sufficient conditions for almost periodicity were given in terms of the local unitary spectrum; here, we show that using the local pseudofunction spectrum already suffices (which, in general, can be much smaller, see Examples \ref{ex:PFEmptyButNotContinuous} and \ref{ex:PseudoZeroUnitaryWholeLine}).

\begin{definition}
A $C_0$-semigroup $\mathcal{T}$ is called \emph{(weakly) almost periodic} if for each $x \in X$ the orbit $\{ T(t) x \mid t \geq 0 \}$ is relatively (weakly) compact.
\end{definition}

By Mackey's theorem, every weakly almost periodic $C_0$-semigroup must be bounded. 

Note that if the orbit $T(t)x$ is stable, it is totally bounded, and thus relatively compact.
Stable semigroups are therefore almost periodic, but the converse is not true in general, which can be seen from the following trivial example. 

\begin{example}
	Consider the multiplication semigroup $\mathcal{T} = (e^{it})_{t\geq0}$ on $\C$, which is isometric, hence not strongly stable. 
	However, for each $x \in \C$, its orbit $\{e^{it} x \mid t\geq0\}$ is relatively compact, hence $\mathcal{T}$ is almost periodic.
\end{example}

We will actually work with individual orbits. The next result improves upon \cite[Theorem 6.1]{B-V-R-LocalSpecInStab}, where a characterization was obtained when $\sigma_u(A, x)$ is countable. 

\begin{theorem}
\label{t:AlmostPeriodic}
    Let $\mathcal{T}$ be a bounded $C_0$-semigroup. For any $x \in X$ with $\sigma_{\PF}(A, x)$ countable, the following properties are equivalent:
    \begin{enumerate}
        \item[$(a)$] The orbit $\{T(t)x\mid t\geq0\}$ is relatively compact;
        \item[$(b)$] the orbit $\{T(t)x\mid t\geq0\}$ is relatively weakly compact;
        \item[$(c)$] $\lim_{\alpha\to0+}\alpha R(\alpha+ i\xi, A)x$ exists for all $i \xi \in \sigma_{\PF}(A, x)$;
        \item[$(d)$] $\wlim{\alpha\to0+}\alpha R(\alpha + i\xi, A)x$ exists for all $i \xi \in \sigma_{\PF}(A, x)$.
    \end{enumerate}
\end{theorem}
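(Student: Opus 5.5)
We prove the cycle $(a)\Rightarrow(b)\Rightarrow(d)\Rightarrow(c)\Rightarrow(a)$, using Theorem \ref{t:StableOrbitCountable} as the main tool.

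The first two implications are standard. The implication $(a)\Rightarrow(b)$ is trivial. For $(b)\Rightarrow(d)$, write
\[ \alpha R(\alpha+i\xi,A)x=\alpha\int_0^\infty e^{-\alpha t}e^{-i\xi t}T(t)x\,\mathrm{d}t . \]
This is a weighted mean of the rotated orbit $e^{-i\xi t}T(t)x$, so it lies in the closed convex hull of a relatively weakly compact set. By Krein--\v{S}mulian that hull is weakly compact. The usual mean ergodic argument then gives weak convergence as $\alpha\to0^+$ for every $\xi\in\R$, and in particular on $\sigma_{\PF}(A,x)$. Concretely, the rotated semigroup $e^{-i\xi t}T(t)$ restricted to $Y=\overline{\operatorname{span}}\{T(t)x\}$ is mean ergodic at $x$ by the Eberlein-type mean ergodic theorem \cite{A-B-H-N-VVLaplaceTransCauchyProb}.

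For $(d)\Rightarrow(c)$, set $y_\xi=\wlim{\alpha\to0^+}\alpha R(\alpha+i\xi,A)x$. Since $\alpha R(\alpha+i\xi,A)(i\xi-A)R(\beta+i\xi,A)x\to 0$ in norm, one checks that $y_\xi\in\ker(i\xi-A)$ and that $x-y_\xi$ satisfies the weak limit condition of Lemma \ref{l:Ergodic}$(c)$. By Lemma \ref{l:Ergodic}, $\alpha R(\alpha+i\xi,A)(x-y_\xi)\to0$ in norm. On the other hand, $\alpha R(\alpha+i\xi,A)y_\xi=y_\xi$ exactly. Hence $\alpha R(\alpha+i\xi,A)x\to y_\xi$ in norm.

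The main step is $(c)\Rightarrow(a)$. Let $\{i\xi_n\}=\sigma_{\PF}(A,x)$ (countable) and $y_n=\lim_{\alpha\to0^+}\alpha R(\alpha+i\xi_n,A)x$. As above, $Ay_n=i\xi_ny_n$, so $T(t)y_n=e^{i\xi_n t}y_n$. The plan is to split off the periodic part and show the remainder orbit is stable.

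If $\sigma_{\PF}(A,x)$ is finite, put $z=x-\sum_n y_n$. Each $y_n$ has $\sigma_{\PF}(A,y_n)\subseteq\{i\xi_n\}$, since $R(\lambda,A)y_n=(\lambda-i\xi_n)^{-1}y_n$. Moreover, for $m\neq n$ we get $\alpha R(\alpha+i\xi_m,A)y_n\to0$. Hence $\sigma_{\PF}(A,z)$ is countable and $\alpha R(\alpha+i\xi,A)z\to0$ at each of its points. Theorem \ref{t:StableOrbitCountable}$(d)$ then gives $T(t)z\to0$. So the orbit of $x$ is a finite sum of relatively compact sets plus a vanishing orbit, and is relatively compact.

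The infinite countable case is the obstacle, because $\sum y_n$ need not converge. I would avoid summing and argue via total boundedness directly. Given $\varepsilon>0$, I would choose $\varphi\in\D(\R)$ with $\varphi\equiv1$ near finitely many points $\xi_1,\dots,\xi_N$ and with support avoiding a further portion of the spectrum. The aim is that $x-\widehat{\check\varphi}(T)x$ has local pseudofunction spectrum controlled well enough for the finite-case argument to apply to a modified vector. I expect this route to be delicate.

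The cleaner alternative I would try first is a Glicksberg--deLeeuw type reduction. Restrict to $Y=\overline{\operatorname{span}}\{T(t)x\}$ and consider the closed subspace $Y_0\subseteq Y$ of vectors with stable orbits. One shows that the quotient semigroup on $Y/Y_0$ has $\sigma_{\PF}$ at $[x]$ still countable, inheriting condition $(c)$ through Proposition \ref{l:CommutingOperator}. One also shows that $[x]$ lies in the closed span of eigenvectors, obtained as limits of approximations built from the $y_n$ and the net in \eqref{ApproximatedNet}. Orbits in the closed span of eigenvectors of a bounded semigroup are relatively compact, and lifting back gives $(a)$. Verifying that $[x]$ lies in this eigenvector span, using Lemma \ref{LocalVersionIsolatedPoints} to exclude perfect residual parts of the spectrum, is the step I expect to require the most care.
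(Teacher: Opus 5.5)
\textbf{Verdict: there is a genuine gap.} Your steps $(a)\Rightarrow(b)\Rightarrow(d)\Rightarrow(c)$ are fine. In particular, upgrading the weak Abel limit to a norm limit via $Ay_\xi=i\xi y_\xi$ and Lemma~\ref{l:Ergodic} is correct. Your treatment of $(c)\Rightarrow(a)$ when $\sigma_{\PF}(A,x)$ is finite is also correct. But the case of infinitely many points, which is the real content of the theorem, is not proved.

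\textbf{Why your two routes do not close it.} The cutoff route is only sketched. The Glicksberg--deLeeuw route gives no argument for its decisive claim, namely that $[x]$ lies in the closed span of eigenvectors of the quotient. Moreover, quotienting by the stable vectors $Y_0$ is the wrong reduction, for two reasons.
\begin{itemize}
\item It does not remove the eigenvectors $y_n$. The Abel limits of $[x]$ at the points $i\xi_n$ are still $[y_n]$, so Theorem~\ref{t:StableOrbitCountable} gives nothing in $Y/Y_0$.
\item What passes from $Y/Y_0$ back to $Y$ is stability, not relative compactness. For example, on $C_0(\R_+)\oplus\C$ let $T(t)(f,c)=(S(t)f+c\,u_t,c)$, where $S$ is the left shift, $u_t(r)=B(r+t)-B(r)$ and $B(r)=\sin\log(1+r)$. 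This is a bounded $C_0$-semigroup whose stable vectors are $C_0(\R_+)\oplus\{0\}$. The induced semigroup on the quotient $\C$ is the identity, yet the orbit of $(0,1)$ is not relatively compact. So your claim that ``lifting back gives $(a)$'' is false as a general principle.
\end{itemize}

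\textbf{The missing idea.} Quotient by a closed invariant subspace that \emph{contains} all the $y_\xi$ at once, so that nothing has to be summed. Take $X_{\mathrm{ap}}=\{y\in X \mid \{T(t)y\}_{t\geq0} \text{ is relatively compact}\}$; the closed span of the eigenvectors with eigenvalues in $i\R$ works equally well. Let $q$ be the quotient map and $\widetilde{\mathcal T}$ the induced semigroup.
\begin{itemize}
\item The identity $R(\lambda,\widetilde A)q=qR(\lambda,A)$ gives $\sigma_{\PF}(\widetilde A,qx)\subseteq\sigma_{\PF}(A,x)$, so this set is countable.
\item For every $i\xi$ in it, $\alpha R(\alpha+i\xi,\widetilde A)qx\to q(y_\xi)=0$, because $T(t)y_\xi=e^{i\xi t}y_\xi$ places $y_\xi$ in $X_{\mathrm{ap}}$.
\item Theorem~\ref{t:StableOrbitCountable} now yields $\widetilde T(t)qx\to0$.
\item This lifts to $X$. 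If $\|T(t_0)x-y\|_X\leq\varepsilon$ with $y\in X_{\mathrm{ap}}$, then the orbit of $x$ lies within $\varepsilon\sup_{t\geq 0}\|T(t)\|_{L(X)}$ of the compact set $\{T(t)x\}_{0\leq t\leq t_0}\cup\overline{\{T(t)y\}_{t\geq0}}$. Hence the orbit is totally bounded.
\end{itemize}
This is the paper's proof of $(d)\Rightarrow(a)$, and it needs no case distinction between finitely and infinitely many points.
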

\begin{proof}

The implications $(a) \Longrightarrow (b)$ and $(c) \Longrightarrow (d)$ are trivial, while $(b) \Longrightarrow (c)$ follows from \cite[Theorem 2.1.5, p.~76]{K-ErgodicThm}. It just remains to establish $(d) \Longrightarrow (a)$. For it, we follow the same proof method as in \cite[Theorem 6.1]{B-V-R-LocalSpecInStab}, supplemented by the new crucial ingredient Theorem \ref{t:StableOrbitCountable}.

Possibly renorming $X$, we may assume that $\mathcal{T}$ is a contraction semigroup.
We consider the subspace
	\[ X_{\text{ap}} = \{ y \in X \mid \text{the orbit } \{ T(t) y \mid t \geq 0 \} \text{ is relatively compact} \} . \]
Clearly, $X_{\text{ap}}$ is closed in $X$ and invariant under $\mathcal{T}$. Hence $\mathcal{T}$ induces a bounded $C_0$-semigroup $\widetilde{\mathcal{T}} = (\widetilde{T}(t))_{t \geq 0}$ on $X / X_{\text{ap}}$ for which $\widetilde{T}(t) \circ q = q \circ T(t)$ for $t \geq 0$, where $ q : X \to X / X_{\text{ap}} $ is the quotient map. Observe then that
	\[ \widetilde{T}(t) q(x) \to 0 \quad \text{as } t\to\infty \quad \Longrightarrow \quad \{ T(t)x \mid t \geq 0 \} \text{ is relatively compact} . \]
Indeed, take any $\varepsilon > 0$ and suppose $t_0 \geq 0$ is such that $\|\widetilde{T}(t_0) q(x)\|_{X / X_{\text{ap}}} \leq \varepsilon / 2$.
Hence, there exists some $y \in X_{\text{ap}}$ such that $\|T(t_0) x - y \|_X \leq \varepsilon$.
Then, also $\| T(t + t_0) x - T(t) y \|_X \leq \varepsilon$ for $t \geq 0$.
Consequently, we have $\{ T(t) x \mid t \geq 0 \} \subseteq K_\varepsilon + \varepsilon \overline{B}_{X}(0, 1)$ with $K_\varepsilon =  \{T(t)x\mid t\in [0,t_0]\} \cup \overline{\{T(t)y\mid t\geq0\}}$ compact.
As $\varepsilon$ was chosen arbitrarily, we see that $\{ T(t) x \mid t \geq 0 \}$ is totally bounded, therefore relatively compact.

It now suffices to show that the orbit $\widetilde{T}(t) q(x)$ is stable in the Banach space $X / X_{\text{ap}}$.
Let $\widetilde{A}$ denote the generator of $\widetilde{\mathcal{T}}$.
We have $R(\lambda, \widetilde{A}) \circ q = q \circ R(\lambda, A)$ for $\lambda \in \C_+$. Thus
	\[\sigma_{\PF}(\widetilde{A}, qx) \subseteq \sigma_{\PF}(A, x).\]
In particular, $\sigma_{\PF}(\widetilde{A}, qx)$ is countable.
Take now any arbitrary $i \xi \in \sigma_{\PF}(A, x)$ and set
	\[ x_{\xi} = \wlim{\alpha \to 0^+} \alpha R(\alpha + i \xi, A) x = \wlim{\alpha \to 0^+} \alpha \int_{0}^\infty e^{- (\alpha + i \xi) t}T(t) x  \, \mathrm{d}t , \]
which exists by our assumption $(d)$. 
Then $T(t) x_\xi = e^{i \xi t} x_\xi$ for $t \geq 0$, so $x_\xi \in X_{\text{ap}}$.
Since $q$ is weakly continuous, we get
	\[  \wlim{\alpha \to 0+} \alpha R(\alpha + i \xi, \widetilde{A}) q(x) = q\left( \wlim{\alpha \to 0+} \alpha R(\alpha + i \xi, A) x \right) = q(x_\xi) = 0 .  \]
By Theorem \ref{t:StableOrbitCountable}, we see that the orbit $\widetilde{T}(t) q(x)$ is stable, and we are done.
\end{proof}

\begin{corollary}\label{c:almostperiodic} Let $\mathcal{T}$ be a bounded $C_0$-semigroup such that $\sigma_{\PF}(A)$ is countable. Then, $\mathcal{T}$ is almost periodic if and only if there is a dense subspace $M\subseteq X$ such that $\lim_{\alpha\to0+}\alpha R(\alpha + i\xi, A)x$ exists for all $i \xi \in \sigma_{\PF}(A)$ and $x\in M$.
\end{corollary}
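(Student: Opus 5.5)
The plan is to deduce Corollary \ref{c:almostperiodic} from Theorem \ref{t:AlmostPeriodic}, using that $X_{\text{ap}}$ is a closed subspace, as noted in the proof of Theorem \ref{t:AlmostPeriodic}.

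\emph{Necessity.} Suppose $\mathcal{T}$ is almost periodic and take $M=X$. For each $x\in X$ we have $\sigma_{\PF}(A,x)\subseteq\sigma_{\PF}(A)$ by \eqref{eq:InclusionChainSpectra}, so $\sigma_{\PF}(A,x)$ is countable. Hence Theorem \ref{t:AlmostPeriodic} applies to $x$, and the implication $(a)\Rightarrow(c)$ gives existence of $\lim_{\alpha\to0^+}\alpha R(\alpha+i\xi,A)x$ for $i\xi\in\sigma_{\PF}(A,x)$.

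This is not yet the full requirement, since we need the limit for every $i\xi\in\sigma_{\PF}(A)$, and this set may be larger than $\sigma_{\PF}(A,x)$. I would handle the extra points with the abelian mean ergodic theorem (\cite[Theorem 2.1.5]{K-ErgodicThm}, the result already quoted for $(b)\Rightarrow(c)$). That theorem states that for a bounded semigroup with relatively weakly compact orbit, $\lim_{\alpha\to0^+}\alpha R(\alpha+i\xi,A)x$ exists for \emph{every} $\xi\in\R$: one applies it to the rescaled semigroup $e^{-i\xi t}T(t)$, whose orbit through $x$ is still relatively weakly compact. This settles necessity with $M=X$.

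\emph{Sufficiency.} Let $M$ be a dense subspace as in the statement and fix $x\in M$. Again $\sigma_{\PF}(A,x)\subseteq\sigma_{\PF}(A)$ is countable, and the hypothesis gives condition $(c)$ of Theorem \ref{t:AlmostPeriodic} at every point of $\sigma_{\PF}(A,x)$. Therefore the orbit of $x$ is relatively compact, so $M\subseteq X_{\text{ap}}$. Since $X_{\text{ap}}$ is closed (boundedness of $\mathcal{T}$ and a total boundedness argument) and $M$ is dense, we get $X_{\text{ap}}=X$, that is, $\mathcal{T}$ is almost periodic.

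The only step that is not purely formal is the necessity direction at points of $\sigma_{\PF}(A)\setminus\sigma_{\PF}(A,x)$, and it is resolved by the ergodic theorem applied to rescaled semigroups as above.
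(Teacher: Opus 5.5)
Your proof is correct, and it differs from the paper's only in where the density argument is applied.

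\textbf{Sufficiency.} The paper first extends the \emph{hypothesis} from $M$ to all of $X$. The set of $x$ for which $\lim_{\alpha\to0^+}\alpha R(\alpha+i\xi,A)x$ exists is a closed subspace, because $\|\alpha R(\alpha+i\xi,A)\|_{L(X)}$ is uniformly bounded by \eqref{eq:resolvent bound}. The paper then applies Theorem \ref{t:AlmostPeriodic} to every $x\in X$. You instead apply Theorem \ref{t:AlmostPeriodic} only on $M$ and extend the \emph{conclusion} through the closedness of $X_{\text{ap}}$. Both steps rest on the same uniform-boundedness principle, so neither route gains much over the other. Yours reuses a fact already established inside the proof of Theorem \ref{t:AlmostPeriodic}, while the paper's uses only the resolvent bound.

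\textbf{Necessity.} The paper simply says it is enough to prove the converse. You correctly point out that Theorem \ref{t:AlmostPeriodic}$(a)\Rightarrow(c)$ only covers points of $\sigma_{\PF}(A,x)$. Your fix is valid: apply the abelian mean ergodic theorem to the rescaled semigroups $e^{-i\xi t}T(t)$, whose orbits through $x$ remain relatively compact. An alternative fix: at points $i\xi\in\sigma_{\PF}(A)\setminus\sigma_{\PF}(A,x)$, the estimate \eqref{eq:LocalPseudoBoundBehImpliesSmallBlowup} applied to $F(\lambda)=R(\lambda,A)x$ shows directly that $\alpha R(\alpha+i\xi,A)x\to0$.
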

\begin{proof} It is enough to prove the converse. The assumption actually implies that $\lim_{\alpha\to0+}\alpha R(\alpha + i\xi, A)x$ exists for all $i \xi \in \sigma_{\PF}(A)$ and $x\in X$, in view of the density of $M$ and the resolvent bound \eqref{eq:resolvent bound}. Hence Theorem \ref{t:AlmostPeriodic} yields the result.
\end{proof}
Note that the Lyubich-V\~{u} criterion for almost periodicity \cite{V-L-SpecCritAlmostPeriodic} is a particular instance of Corollary \ref{c:almostperiodic}.

\appendix

\section{The proof of Theorem \ref{t:SuffCondVanishSingSupp}}
\label{appendix:ProofTheorem}
In order to keep the article self-contained, we give here a proof of Theorem \ref{t:SuffCondVanishSingSupp}. Our arguments are essentially the same as those from \cite{D-V-ComplexThmLaplaceLocalPseudo}, except for a simplification where we replace the use of Romanovski's lemma \cite{E-V-Romanovski} by a contradiction argument. We start with a vector-valued version of \cite[Lemma 4.6]{D-V-ComplexThmLaplaceLocalPseudo}.

\begin{lemma}
\label{AuxiliaryLemma}
Let $\tau\in L^{\infty}(\R; Z)$ and let $J \subseteq \R$ be open. Suppose
	\[ \widehat{\tau} \in \PFloc\left( J \setminus \bigcup_{j=0}^n \left[ \xi_j - \frac{l_j}{2}, \xi_j + \frac{l_j}{2} \right] ; Z\right) , \]
where $[\xi_j-l_j, \xi_j+l_j] \subseteq J$ are disjoint. 
There is an absolute constant $C > 0$ such that
    \[
        \limsup_{|t| \to\infty}\Vert \langle \widehat{\tau}(\xi), e^{it\xi} \varphi(\xi)\rangle\Vert_Z \leq C Q \Vert \widehat{\varphi}\Vert_{L^1(\R)} \sum^n_{j=0}l_j, \qquad \varphi \in \D(J) ,
    \]
with $Q = \max_{j=0,\ldots,n}\sup_{t \in \R} \Vert \int^t_0 e^{-i \xi_j s}\tau(s) \,\mathrm{d}s\Vert_Z$.
\end{lemma}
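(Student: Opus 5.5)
We split $\varphi\in\D(J)$ with a partition of unity adapted to the intervals. For each $j$, fix $\chi_j\in\D(\xi_j-l_j,\xi_j+l_j)$ with $\chi_j\equiv1$ on $[\xi_j-\frac{l_j}{2},\xi_j+\frac{l_j}{2}]$, taking $\chi_j(\xi)=\chi((\xi-\xi_j)/l_j)$ for one fixed profile $\chi\in\D(-1,1)$ with $\chi\equiv 1$ on $[-\frac12,\frac12]$. Write
\[
\varphi=\Big(1-\sum_j\chi_j\Big)\varphi+\sum_j\chi_j\varphi .
\]
The first term lies in $\D\big(J\setminus\bigcup_j[\xi_j-\frac{l_j}{2},\xi_j+\frac{l_j}{2}]\big)$, so by \eqref{eq:LocalPseudoTestFunc} its contribution is $o(1)$ as $|t|\to\infty$. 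It therefore suffices to show, for each $j$,
\[
\sup_{t\in\R}\|\ev{\widehat\tau(\xi)}{e^{it\xi}\chi_j(\xi)\varphi(\xi)}\|_Z\le C Q l_j\|\widehat\varphi\|_{L^1}.
\]

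For the local term, set $\sigma_j(s)=\int_0^s e^{-i\xi_j u}\tau(u)\,\mathrm{d}u$, which satisfies $\|\sigma_j\|_\infty\le Q$. With $\psi_j=\chi_j\varphi$ we have
\[
\ev{\widehat\tau(\xi)}{e^{it\xi}\psi_j(\xi)}=\int_{\R}\tau(s)\widehat{\psi_j}(s-t)\,\mathrm{d}s ,
\]
up to the sign convention. Write $\tau(s)=e^{i\xi_j s}\sigma_j'(s)$ and integrate by parts:
\[
\int_{\R}\tau(s)\widehat{\psi_j}(s-t)\,\mathrm{d}s=-\int_{\R}\sigma_j(s)\,\frac{\mathrm{d}}{\mathrm{d}s}\big[e^{i\xi_j s}\widehat{\psi_j}(s-t)\big]\,\mathrm{d}s .
\]
The boundary terms vanish because $\widehat{\psi_j}\in\S$ and $\sigma_j$ is bounded. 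Moreover $e^{i\xi_j s}\widehat{\psi_j}(s-t)$ equals, up to a unimodular factor, the Fourier transform of $(\xi-\xi_j)$-shifted data, so its derivative is (up to constants) $e^{i\xi_j s}\mathcal{F}\{(\xi-\xi_j)\psi_j(\xi)\}(s-t)$. Hence
\[
\text{the above}\le Q\,\|\mathcal{F}\{(\xi-\xi_j)\chi_j(\xi)\varphi(\xi)\}\|_{L^1}.
\]

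The remaining step is the estimate
\[
\|\mathcal{F}\{(\xi-\xi_j)\chi_j\varphi\}\|_{L^1}\le \|\mathcal{F}\{(\xi-\xi_j)\chi_j\}\|_{L^1}\,\frac{\|\widehat\varphi\|_{L^1}}{2\pi}.
\]
This holds because the Fourier transform of a product is a convolution, and $L^1$ is a convolution algebra. By the dilation invariance of the $L^1$ norm of Fourier transforms, $\|\mathcal{F}\{(\xi-\xi_j)\chi((\xi-\xi_j)/l_j)\}\|_{L^1}=l_j\|\mathcal{F}\{\eta\chi(\eta)\}\|_{L^1}$, which is $l_j$ times an absolute constant. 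Summing over $j$ yields the claim with $C$ depending only on $\chi$.

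The main obstacle is bookkeeping. We have to verify the integration by parts, and the identification of $\ev{\widehat\tau}{e^{it\xi}\psi}$ with a convolution, for $\tau\in L^\infty$ paired against Schwartz test functions. This is justified by dominated convergence. We also need to check that the scaling produces exactly the factor $l_j$ and not a factor like $l_j^{0}$. The $\xi-\xi_j$ weight supplies the factor $l_j$, while the cutoff $\chi_j$ alone contributes a dilation-invariant norm.
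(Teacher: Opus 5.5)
Your argument is correct and is essentially the paper's proof. You use the same rescaled cutoffs $\chi_j(\xi)=\chi((\xi-\xi_j)/l_j)$, the same integration by parts against $\sigma_j$, and the same use of Young's inequality, only in the opposite order: the paper first pulls out $\|\widehat{\varphi}\|_{L^1(\R)}$ via $\widehat{\chi_j\varphi}=\frac{1}{2\pi}\widehat{\chi_j}*\widehat{\varphi}$ and then integrates by parts against $\widehat{\chi}(l_j(r+s))$. Since $\mathcal{F}\{\eta\chi(\eta)\}=i\widehat{\chi}'$, you even obtain the same constant $C=\|\widehat{\chi}'\|_{L^1(\R)}/(2\pi)$.

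The only slip is that $\chi$ should be $\equiv 1$ on a \emph{neighborhood} of $[-\frac12,\frac12]$, as in the paper. Otherwise $\supp\big((1-\sum_j\chi_j)\varphi\big)$ may contain the endpoints $\xi_j\pm l_j/2$, and then it does not lie in the open set on which $\widehat{\tau}$ is assumed to be a local pseudofunction.
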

\begin{proof}[Proof of Lemma \ref{AuxiliaryLemma}] We simply reproduce here the proof of \cite[Lemma 4.6]{D-V-ComplexThmLaplaceLocalPseudo}.
We may assume $Q < \infty$; otherwise, the statement is trivial.
Let $\chi \in \D((-1, 1))$ be even such that $\chi(\xi) = 1$ in a neighborhood of $[-1/2, 1/2]$. 
Set $\chi_j(\xi) = \chi((\xi - \xi_j) / l_j)$ for $j \in \{0, \ldots, n\}$. Recall $\check{\tau}(t) = \tau(-t)$.
For any $\varphi \in \D(J)$, our assumption yields that $\widehat{\tau}$ has local pseudofunction behavior on a neighborhood of the support of $(1 - \sum_{j=0}^n \chi_j) \varphi$.
Then, 
	\begin{align*}
		\limsup_{|t| \to \infty} \Vert \langle \widehat{\tau}(\xi), e^{it\xi}\varphi(\xi)\rangle\Vert_Z
		&\leq \limsup_{|t| \to \infty} \sum_{j = 0}^n \Vert \langle \widehat{\tau}(\xi), \chi_j(\xi) e^{it\xi}\varphi(\xi)\rangle\Vert_Z \\
		&\leq \frac{\|\widehat{\varphi}\|_{L^1(\R)} }{2\pi}\sum_{j = 0}^n \sup_{r \in \R} \| (\widehat{\chi}_j*\check{\tau})  (r) \|_Z \\
		&\leq \frac{\|\widehat{\varphi}\|_{L^1(\R)} }{2\pi}\sum_{j = 0}^n l_j \sup_{r \in \R} \Big\| \int_{-\infty}^{\infty} \widehat{\chi}(l_j (r + s))e^{- i \xi_j s} \tau(s)  \, \mathrm{d}s \Big\|_Z  \\
		&\leq \frac{\|\widehat{\chi}^\prime\|_{L^1(\R)}}{2 \pi} Q \|\widehat{\varphi}\|_{L^1(\R)}\sum_{j = 0}^n l_j ,
	\end{align*}
	where we used integration by parts in the last step.
\end{proof}

We can now show Theorem \ref{t:SuffCondVanishSingSupp}. 
\begin{proof}[Proof of Theorem \ref{t:SuffCondVanishSingSupp}] If $\widehat{\varphi}\in\mathcal{D}(I)$, one readily checks that  $\singsupp_{\PF} \widehat{\varphi} \cdot \widehat{\tau}\subseteq E$ and
	\[ \sup_{t \in \R} \Big\| \int_0^t e^{- i \xi s} (\varphi\ast\tau)(s) \, \mathrm{d}s \Big \|_Z < \infty, \]
for each $\xi\in E$. Arguing via localizations, we may therefore assume without loss of generality that $\supp \widehat{\tau}$ is a compact, $I=\R$, and that $E=\singsupp_{\PF} \widehat{\tau}$ is a compact null set. We must show that $\singsupp_{\PF} \widehat{\tau}=\varnothing$.

We reason by contradiction. So, assume that $\singsupp_{\PF} \widehat{\tau}\neq \varnothing.$ Considering the closed subsets

\[ \left\{ \xi \in \singsupp_{\PF} \widehat{\tau} \mid \sup_{t \in\R} \Big\Vert \int^t_0 e^{-i \xi s} \tau(s) \, \mathrm{d}s \Big\Vert_Z \leq Q \right\} , \qquad Q \in \N , \]
the Baire category theorem guarantees the existence of some finite open  interval $J \subseteq \R$ and some $Q\in \N$ such that $\varnothing \neq E_{0}=J\cap \singsupp_{\PF} \widehat{\tau}$ and  
\[
\sup_{\xi \in E_0}\sup_{t \in\R} \Big\Vert \int^t_0 e^{-i \xi s} \tau(s) \, \mathrm{d}s \Big\Vert_Z \leq Q.
\]
Reducing the size of $J$ if necessary, we may assume that $E_{0}$ is compact in addition to having zero Lebesgue measure. Fix $\varepsilon>0$. There is then a finite covering $E_0 \subseteq \bigcup_{j = 0}^n [\xi_j - \ell_j / 2, \xi_j + \ell_j / 2]$, for certain $\xi_j \in E_0$, such that $\sum_{j=0}^n \ell_j < \varepsilon$ and the intervals $[\xi_j - \ell_j, \xi_j + \ell_j] \subseteq J$ are disjoint.
Consequently, $\tau$ satisfies the hypotheses from Lemma \ref{AuxiliaryLemma}, which yields, for some absolute constant $C > 0$,
	\[  \limsup_{\vert t\vert\to\infty}\Vert \langle \widehat{\tau}(\xi), e^{it\xi}  \varphi(\xi)\rangle\Vert_Z \leq \varepsilon C Q \Vert \widehat{\varphi}\Vert_{L^1(\R)}, \]
for any $\varphi \in \D(J)$. Letting $\varepsilon \to 0^+$, the arbitrariness of $\varphi$ allows us to conclude that $\tau\in\PFloc(J)$, or equivalently, that $E_{0}=\varnothing$, producing a contradiction.
\end{proof}

\end{document}